\newcommand{\gammap}{\gamma^\prime}
\newcommand{\deltap}{\delta^\prime}
\newcommand{\three}{\tilde{\chi}}
\newcommand{\four}{\hat{\chi}}
\newcommand{\ffour}{\check{\chi}}
\newcommand{\cC}{\mathcal{C}}
\newcommand{\cD}{\mathcal{D}}
\newcommand{\cF}{\mathcal{F}}
\newcommand{\cL}{\mathcal{L}} 
\newcommand{\cP}{\mathcal{P}}
\newcommand{\cS}{\mathcal{S}}
\newcommand{\cY}{\mathcal{Y}}
\newcommand{\bE}{\mathbb{E}}
\newcommand{\bP}{\mathbb{P}}
\newcommand{\Pro}{\mathsf{P}}
\newcommand{\Qro}{\mathrm Q}
\newcommand{\Exp}{\mathsf{E}}
\newcommand{\bS}{\mathbb{S}}
\newcommand{\bN}{\mathbb{N}}
\newcommand{\bR}{\mathbb{R}}
\newcommand{\SPRT}{\st^\prime}
\newcommand{\SPRTd}{\dr^\prime}
\newcommand{\TST}{\tilde{\st}}
\newcommand{\TSTd}{\tilde{\dr}}
\newcommand{\FST}{\hat{\st}}
\newcommand{\FSTd}{\hat{\dr}}
\newcommand{\FFST}{\check{\st}}
\newcommand{\FFSTd}{\check{\dr}}
\newcommand{\tgamma}{\tilde{\gamma}}
\newcommand{\tdelta}{\tilde{\delta}}
\newcommand{\hgamma}{\hat{\gamma}}
\newcommand{\hgammap}{\hat{\gammap}}
\newcommand{\lev}{\xi} 
\newcommand{\f}{\mathcal{U}}  
\newcommand{\ff}{\mathcal{V}}
\newcommand{\thetas}{\theta_*}
\newcommand{\myremark}{\noindent \underline{\textbf{Remark}}: }
\newcommand{\st}{\tau} 
\newcommand{\Stat}{T}  
\newcommand{\n}{n^*}     
\newcommand{\ka}{\kappa}  
\newcommand{\Ka}{K}
\newcommand{\thre}{\kappa^*} 
\newcommand{\dr}{d}
\newcommand{\phis}{\phi^*}
\newtheorem{theorem}{Theorem}[section]
\newtheorem{corollary}{Corollary}[theorem]
\newtheorem{lemma}{Lemma}[section]
 \theoremstyle{plain}
\numberwithin{equation}{section}
\theoremstyle{plain}
\begin{document}

\begin{frontmatter}
\title{3-stage and 4-stage  tests  \\
with deterministic stage sizes \\
and non-iid data\support{This research was supported in part by the US National Science
Foundation under grant ATD-1737962 through the University of
Illinois at Urbana-Champaign.}}
\runtitle{3-stage and 4-stage tests}

\begin{aug}
\author{\fnms{Yiming} \snm{Xing}
\ead[label=e1]{yimingx4@illinois.edu}}
\and
\author{\fnms{Georgios} \snm{Fellouris}%
\ead[label=e2]{fellouri@illinois.edu}}

\address{725 S. Wright St. Champaign, IL 61822, USA \\
University of Illinois,  Urbana-Champaign\\
\printead{e1,e2}}

\runauthor{Y. Xing and G. Fellouris}

\affiliation{University of Illinois at Urbana-Champaign}

\end{aug}

\begin{abstract} Given a  fixed-sample-size test that  controls the  error probabili-ties  under two specific, but arbitrary, distributions, a 3-stage and two 4-stage tests  are proposed and  analyzed. For each of them,  a novel, concrete, non-asymptotic, non-conservative design is specified,  which  guarantees the same error control as the given fixed-sample-size test.  Moreover,  first-order asymptotic approximation are established on their expected sample sizes under the two prescribed distributions as the error probabilities go to zero.   As a corollary, it is shown that the proposed multistage tests can achieve, in  this asymptotic sense, the  optimal expected sample size  under these  two  distributions  in the class of all sequential tests with the same error control.   Furthermore, they  are shown to be much more robust than Wald's  SPRT when applied to  one-sided testing problems  and the error probabilities under control are small enough.    These  general results are applied to  testing problems in the iid setup and beyond, such as testing  the correlation coefficient of a first-order autoregression, or   the transition matrix of a finite-state Markov chain, and are illustrated in various numerical studies. 
\end{abstract}



\begin{keyword}[class=MSC]
\kwd[Primary ]{62L05}
\kwd{62L10}
\end{keyword}

\begin{keyword}
\kwd{multistage tests}
\kwd{group-sequential tests}
\kwd{sequential testing}
\kwd{asymptotic optimality}
\kwd{asymmetric errors}
\kwd{large-deviation theory}
\kwd{importance sampling}
\end{keyword}
\tableofcontents
\end{frontmatter}

\section{Introduction}
A typical motivation for employing a  \textit{sequential} test, i.e., a testing procedure whose sample  size depends on the collected observations,
 is that its \text{average}  sample size can be much  smaller   than that of the corresponding  \textit{fixed-sample-size} test.  
The first  test of this kind in the literature   was  the \textit{double  sampling}  procedure  of Dodge and Romig \cite{Dodge_Romig_1929}, 
a   precursor to  Wald's  Sequential Probability Ratio Test (SPRT)   \cite{Wald1948OptimumCO} and the field of ``sequential analysis''. 
However, the implementation of the  SPRT, as well as   of many  sequential tests in the literature (see, e.g.,    \cite{Tartakovsky_Book}),  requires  continuous monitoring of the data collection process, which is often inconvenient, or even infeasible,  in  application areas such   as   sampling inspection and   clinical trials  \cite{Jennison_Turnbull_Book,Bartroff_Book_Clinicaltrials}.  As a result, the emphasis in such applications   has been  on   \emph{group-sequential} tests,  like the  one  in \cite{Dodge_Romig_1929},  i.e., sequential tests whose implementation  requires  the collection of only a  small  number of  \textit{groups of samples}. 
An equivalent terminology, which we  use in this work,  is   \emph{multistage tests},   in which the  \text{groups of samples} are referred to as  \textit{stages}.

Most works about multistage tests,  e.g.,  \cite{Armitage_1969,Pocock_1977,OBrien_Fleming_1979,Pocock_1982,Wang_Tsiatis_1987,Emerson_Fleming_1989, Eales_Jennison_1992,PAMPALLONA1994,Barber_Jennison_2002}, (i) focus on testing the mean of iid Gaussian observations with known variance, (ii)  are  designed to  control prescribed  type-I and type-II error probabilities under two specific distributions, and (iii) require equal stage sizes.  Free parameters, if any, as in \cite{Wang_Tsiatis_1987}, are selected to optimize the expected sample size under a certain distribution, such as the one under which the type-II error probability is controlled. 
This  optimization is  performed via  dynamic programming in \cite{Eales_Jennison_1992, Barber_Jennison_2002}.   

Multistage tests with unequal and random stage sizes are considered in  \cite{Lan_DeMets_1983, Kim_DeMets_1987,Jennison_1987}, as well as in  \cite{Lai_Shih_2004}. In the latter,   more general testing problems, regarding the parameters of an exponential family, are also  studied.

In  all the  above works the stage sizes are  treated as user-specified inputs.   Lorden in  \cite{Lorden_1983} showed that  3-stage  tests, \textit{with properly selected  stage sizes},   achieve asymptotically the optimal expected sample size, under both hypotheses, among all sequential tests with the same or smaller  error probabilities  as the latter  go to 0.   In the case of simple hypotheses for  iid data,  this was shown for tests with   \textit{deterministic} stage sizes  \cite[Section 2]{Lorden_1983}.   On the other hand,  in the case of composite hypotheses for the one-sided testing problem in a one-parameter exponential family, this was shown for  tests whose   stage sizes are \textit{adaptive}, i.e., they  can  depend on the  data from the previous  stages \cite[Section 3]{Lorden_1983}. Such multistage tests were also  considered in \cite{Bartroff_2008_adaptive, Bartroff_2008_anotheradaptive}, where they were designed to  be less conservative than in \cite[Section 3]{Lorden_1983}.  All these asymptotic optimality results require  certain  assumptions on  the decay rates of the prescribed error probabilities, which are not allowed to  go to 0 very asymmetrically.

In the present work we focus on the  design and analysis of multistage tests with \textit{deterministic} stage sizes, and we strengthen, extend and generalize   the results in \cite[Section 2]{Lorden_1983}. First of all,  unlike all the above mentioned works, we do not require  that the observations  be either independent or identically distributed.  Instead, we only assume that a  fixed-sample-size test  is given, which can control the  type-I and type-II error probabilities under two   specific  distributions below arbitrary levels.   Given such a test,  we introduce and analyze    a \textit{3-stage test}, that generalizes the one in \cite[Section 2]{Lorden_1983}, as well as   two novel \textit{4-stage tests}. For each of them, 
 we   propose a  novel, concrete, non-asymptotic, non-conservative  specification, 
 which  guarantees the same error control as the  fixed-sample-size test.     This specification  only requires  knowledge of  the number of observations  and the threshold the fixed-sample-size test requires for its error control. 
 While there are not, in general, explicit formulas for these quantities, they can be  estimated via  simulation. In the case of very small error probabilities, in which plain Monte-Carlo  is not efficient or even feasible (see, e.g., \cite{Bucklew_Book}), we  propose  a simulation approach  via importance sampling.

In order to obtain theoretical insights regarding the proposed multistage tests,  we  impose  some  structure on the above general  setup. Essentially, we assume  that there are thresholds for which the error probabilities of the given  fixed-sample-size test,   under the two prescribed distributions,  decay  exponentially fast in  the sample size.  Using 
the  G\"artner-Ellis theorem  from large deviation theory (see, e.g., \cite{Dembo_Zeitouni_LDPBook}), we show that the required conditions are satisfied in various testing problems beyond the iid setup. Two specific examples,  which we work out in detail,  are  testing the  correlation coefficient of a first-order  autoregression, and  testing the transition matrix of an irreducible and recurrent finite-state  Markov chain.

Assuming that the above conditions hold,    we   establish  first-order  asymptotic approximations to the expected sample sizes of the proposed  multistage tests under the distributions  with respect to which we control the error probabilities, as the latter  go to 0. For the 3-stage test, the relative decay of the  error probabilities is   allowed to be much more asymmetric than the one  required in  \cite[Section 2]{Lorden_1983}. Even more asymmetric rates are allowed for each of the two  4-stage tests.   As a corollary,    we   extend  the asymptotic optimality of the 3-stage test  in \cite[Section 2]{Lorden_1983}, beyond the iid setup and  for  more asymmetric error probabilities. Moreover,
we  show that  the two proposed 4-stage tests are asymptotically  optimal,  in the same setup as the 3-stage test, with even  more asymmetric  error probabilities.  These   results are also  illustrated in a numerical study, where these  multistage tests are compared with the SPRT with respect to their average sample sizes under the two prescribed distributions. 

In order to  obtain a more  complete understanding of  how the  proposed multistage tests perform, especially in comparison to  the SPRT,  it is important to assess their  behavior  when the true distribution is different from those under which we control the error probabilities.  Indeed,  when the SPRT is applied to the  one-sided testing problem for the mean of iid Gaussian observations with known variance, as suggested in \cite[Chapter 7.5]{Wald_Book}, its  \text{expected}  sample size can be much larger \textit{even than that of the  corresponding optimal fixed-sample-size test} when the true mean is between the values used for the design of the SPRT (see, e.g.,  \cite{Bechhofer60}).   Motivated by this phenomenon, we establish  a \textit{distribution-free} asymptotic upper bound on the  expected sample sizes of the proposed multistage tests, as at least one of the two prescribed error probabilities goes to 0.  This  reveals that when the prescribed error  probabilities are small enough, the proposed multistage tests are much more  robust  than the SPRT, thus, they  may be preferable  not only because of  their practical advantages, but also based  on  statistical  considerations.


The remainder of this paper is organized as follows.
In Section \ref{sec:formulation} we formulate the testing setup and in Section  \ref{sec:multistage} we   introduce and analyze the proposed multistage tests.  In Section \ref{sec:asy} we state our  asymptotic results,
and in Section \ref{sec:sufficient} we state sufficient conditions for this asymptotic analysis.
In Section \ref{sec: IS} we  propose an importance sampling  approach for the   implementation of  the proposed tests when the error probabilities are small.
  In Section \ref{sec:examples} we illustrate the  general theory in   three specific testing problems.
In Section \ref{sec:numerical}  we present the results of our numerical studies. In Section \ref{sec:conclusions} we conclude and discuss potential extensions. The  proofs of most results  are presented in  Appendices 
\ref{app:A}, \ref{app:new}, \ref{app: proof from Assumptions B to Assumptions A}. 

Finally, we introduce some  notations that we use throughout the paper. We denote by $\bN$  the set of positive integers, i.e., $\bN \equiv \{1,2,\ldots\}$, and by $\bR$ the set of real numbers.   For a set $A$ 
we denote by $1\{A\}$ its indicator function and by $A^o$ its interior.   For a function $f:\bR\to (-\infty,\infty]$, we call $\{x\in\bR: \,f(x)<\infty\}$ the effective domain of $f$, and denote by $f(x+)$ the right limit and by $f(x-)$ the left limit of $f$ at $x\in \bR$,  when they exist. For $x, y \in \bR$ we set  $x\wedge y \equiv \min\{x,y\}$ and $x\vee y \equiv\max\{x,y\}$. For positive sequences $(x_n), (y_n)$,  we write  $x_n \sim y_n$ for  $\lim_n (x_n/y_n) =1$, $x_n \gtrsim y_n$ for $\underline\lim_n \, (x_n/y_n) \geq 1$,  $x_n\lesssim y_n$ for $\overline\lim_n \,  (x_n/y_n) \leq 1$, $x_n<<y_n$ for $ x_n/y_n \to 0$, and $x_n>>y_n$ for  $x_n/y_n  \to \infty$.

\section{Problem formulation}
  \label{sec:formulation}
We consider a sequence of $\bS$-valued random elements, $X\equiv \{X_n, n\in \mathbb N\}$,  where $(\bS, \cS)$ is  an arbitrary measurable space. For any  $n \in \bN$,  we denote by  $\cF_n$  the $\sigma$-algebra generated by the first $n$ terms of this sequence, i.e.,   $\cF_n \equiv \sigma(X_1,\ldots,X_n).$
Moreover, we denote by  $\Pro$ the distribution of $X$,   assume that it belongs  to some family, $\cP$,  and consider  the following hypotheses for it,
\begin{equation} \label{Binary testing problem formulation}
    H_0: \, \Pro \in \cP_0   \qquad \text{  versus  } \qquad   H_1: \Pro \in \cP_1,
\end{equation}  
where   $\cP_0$ and   $\cP_1$ are disjoint subsets of $\cP$.

\subsection{Tests}
We allow  the data to be collected sequentially so that,  after each observation,   the decision   whether to stop sampling or not and, in the former case, whether to select the null or the alternative hypothesis, can depend on  all the already collected data. Thus,  we say that  $ \chi \equiv (\st,\dr)$ is  a \textit{test}  for \eqref{Binary testing problem formulation} if  the random time, $\st$, that represents the
utilized sample size,  is  a stopping time  with respect to  the filtration 
$\{\cF_n,\, n \in \bN\}$,  and  the Bernoulli random variable, $d$, that represents the decision ($H_i$ being selected when $d=i$,  where  $i \in \{0,1\}$)  is  $\cF_\st$-measurable,  i.e., 
$$ \{\st=n\}, \, \{\st=n, \, \dr=i\} \in \cF_n, \quad \forall \; n \in \bN, \;  i \in \{0,1\}.
$$

We denote by  $\cC$ the family of all tests, and  we further introduce a subfamily of tests that control the two error probabilities under two specific,  but arbitrary,   distributions. To be specific,  we  fix $\Pro_0 \in \cP_0$, $\Pro_1\in \cP_1$ and,  for any  $\alpha, \beta \in (0,1)$,    we  denote by  $\cC(\alpha, \beta)$  the family of  tests whose  type-I error probability under $\Pro_0$ does not exceed $\alpha$ and whose type-II probability under $\Pro_1$  does not exceed  $\beta$, i.e., 
\begin{equation} \label{Class of binary tests}
 \cC(\alpha, \beta) \equiv  \{ (\st,\dr) \in \cC :  \;  \Pro_0(\dr=1)\leq \alpha \quad \text{and} \quad \mathbb \Pro_1(\dr=0)\leq \beta \}. 
\end{equation}

\subsection{The fixed-sample-size test}
Our only standing assumption throughout the paper is that there is a  sequence of test statistics,
  $\Stat \equiv \{\Stat_n,\, n\in \bN\}$,   such that $\Stat_n$ is $\cF_n$-measurable   for every $n\in \bN$ and, 
\textit{for any  $\alpha,\beta\in (0,1)$,  there exist  $n \in \bN$  and  $\ka \in \bR$ so that  the fixed-sample-size test that rejects $H_0$ if and only if  $\Stat_n>\ka $  belongs to $\cC(\alpha, \beta)$}.
Suppresing the dependence on $T$, we denote by $\n(\alpha,\beta)$ the smallest such sample size, i.e., 
\begin{equation} \label{n*(alpha,beta)}
     \n(\alpha,\beta)  \equiv  \inf \{n \in \bN: \exists \, \ka \in\bR: \; \Pro_0( \Stat_n >\ka)\leq \alpha \; \text{and} \;   \Pro_1( \Stat_n \leq \ka) \leq \beta\},
\end{equation}
and  by $\thre(\alpha,\beta)$ any of the corresponding thresholds.  In  Section \ref{sec: IS} we discuss  the estimation of these quantities via Monte-Carlo simulation when they do not admit  closed-form expressions.

\subsection{Goals}
The main goal of this work is  to design \textit{multistage tests with deterministic stage sizes} that 
\begin{itemize}
\item[(i)] belong to $\cC(\alpha, \beta)$, for any choice of $\alpha,\beta\in(0,1)$,
\item[(ii)] are robust,  in the sense that their expected sample sizes  under any plausible distribution are not much  larger than   $\n(\alpha,\beta)$,   when $\alpha, \beta$ are small enough,
\end{itemize}
and,  if additionally the test statistic $T$ is selected appropriately, 
\begin{itemize}
\item[(iii)]  achieve  asymptotically, as $\alpha, \beta \to 0$, the optimal expected sample size  in $\cC(\alpha, \beta)$ under both  $\Pro_0$ and $\Pro_1$,   $\cL_0(\alpha, \beta) $ and $\cL_1(\alpha, \beta)$, where  
\end{itemize}
\begin{align}  \label{optimal}
\cL_i(\alpha, \beta) \equiv    \inf \left\{ \Exp_i[\st]:  (\st,\dr)\in \cC(\alpha,\beta) \right\}, \quad   i \in \{0, 1\},
\end{align}
and   $\Exp$ and  $\Exp_i$  represent expectation under $\Pro$ and $\Pro_i$,  $ i \in \{0,1\}$. \\

The   error control in (i) and the asymptotic optimality property in (iii) are common goals in many sequential testing formulation, including \cite[Section 2]{Lorden_1983}. In order to explain the necessity and  importance of the  robustness property in (ii), it is useful to consider the special case of the  generic  one-sided testing problem.

\subsection{The one-sided testing problem}
\label{subsec: one-sided testing problem}
Consider the case where   the family of plausible distributions, $\cP$, is parametrized by a scalar parameter, $\mu$, taking values in  an open interval $M\subseteq \bR$. That is, if   $\bP_\mu$  denotes the distribution, and $\bE_\mu$ the expectation, of $X$ when the true parameter is $\mu$, then
$$\cP=\{\bP_\mu: \mu \in M\}.$$ 
Moreover, suppose that the testing problem of interest is whether the true parameter $\mu$ is smaller or larger than some user-specified value, $\mu_* \in M$, i.e., 
\begin{equation} \label{genreal framework of one-sided testing in one-dim parametric family1}
H_0: \mu < \mu_* \qquad \text{versus} \qquad H_0: \mu> \mu_*,
\end{equation}
or equivalently
\begin{equation} \label{genreal framework of one-sided testing in one-dim parametric family2}
 \cP_0 =\{\bP_\mu: \mu < \mu_*\},   \quad \cP_1=\{\bP_\mu: \mu > \mu_*\}.
\end{equation}
If also, it is required that  the type-I   error probability  be controlled below $\alpha$ when $\mu=\mu_0$ and the  type-II error probability  below 
  $\beta$ when $\mu=\mu_1$, where $\alpha,\beta\in (0,1)$ and
  $$\mu_0, \mu_1 \in M,  \quad \mu_0< \mu_1, \quad \mu_0\leq  \mu_* \leq \mu_1,$$
then this  is  a special case of the framework of this section,  with
\begin{equation} \label{genreal framework of one-sided testing in one-dim parametric family3}
 \Pro_i  =\bP_{\mu_i}, \quad i \in \{0,1\}.
\end{equation}

In this context, the  asymptotic optimality  property in (iii) guarantees that the expected sample size   \textit{when the true parameter  is in  $\{\mu_0, \mu_1\}$} will be relatively close to the optimal in $\cC(\alpha, \beta)$, at least when $\alpha, \beta$ are small enough.  However,   it is well known (see e.g., \cite{Bechhofer60})  that the expected sample size of such an asymptotically optimal test may be unacceptably  large  when  the true parameter  is \textit{between} $\mu_0$ and $\mu_1$ (see also  Subsection \ref{sec: upper bound in the middle} below).  This phenomenon motivates the design of sequential tests that are asymptotically  optimal even when the true parameter  is not   in  $\{\mu_0, \mu_1\}$,  (see, e.g.,   \cite[Chapter 16]{Chernoff_book}).  Such  an asymptotic optimality property 
has  been established for fully sequential tests  (see, e,.g., \cite[Chapter 5]{Tartakovsky_Book}) and  for  multistage tests with \textit{adaptive} stage sizes (see, e.g., \cite{Lorden_1983}, \cite{Bartroff_2007}, \cite{Bartroff_2008_adaptive}). However, it is not, in general, achievable by multistage tests with  \textit{deterministic} stage sizes, which cannot easily adapt to the true value of the parameter.  Thus, the  robustness property in (ii) guarantees that, even if it is asymptotically suboptimal,  the average sample size  of such a multistage test does not exceed, at least by much,  that of the corresponding  fixed-sample-size test, no matter what the true distribution is.   As a result, it is a necessary  complement to the  asymptotic optimality property in (iii), making sure that  the latter   does not come at the price of an inflated expected sample size  when  the  true parameter  is between $\mu_0$ and $\mu_1$.  \\

\myremark  In the context of the above one-sided testing problem,  it is desirable that a  test  $ \chi\equiv (\tau, d)$  in   $\cC(\alpha, \beta)$  controls  the  type-I error probability below $\alpha$ for \textit{every} $\mu \leq \mu_0$  and the  type-II error probability below $\beta$ for \textit{every} $\mu \geq \mu_1$, i.e., 
\begin{align} \label{uniform error control}
\bP_\mu(d=1) \leq \alpha  \quad \forall \; \mu  \leq \mu_0
\qquad \text{and} \qquad 
\bP_\mu(d=0) \leq \beta  \quad \forall \; \mu \geq \mu_1,
\end{align}
where $\alpha,\beta\in (0,1)$.
This is obviously the case for the given fixed-sample-size test that rejects $H_0$ if and only if $T_n>\ka$  when
\begin{equation} \label{stochastic mono}
\begin{split}
    & \bP_{\mu_0}(T_n>\ka)=\sup_{\mu\leq \mu_0} \bP_\mu(T_n>\ka), \\
    & \bP_{\mu_1}(T_n\leq \ka)=\sup_{\mu\geq \mu_1} \bP_\mu(T_n\leq \ka).
\end{split}
\end{equation}
If the monotonicity property in \eqref{stochastic mono} holds for every $n\in \bN$ and  $\ka\in\bR$, then the uniform error control in \eqref{uniform error control} will also  hold,  for every $\alpha,\beta\in (0,1)$,  for the proposed multistage tests in this work.

\section{The multistage tests}
  \label{sec:multistage}
In this section we  introduce and analyze the multistage tests that  we consider in this work. 

\subsection{The 3-stage test}\label{sec:3ST}
We next introduce and analyze a  test that offers two opportunities to accept  the null hypothesis and two to reject it.
Its implementation requires the specification of three positive integers, $n_0,n_1,N$,  and three real thresholds, $\ka_0,\ka_1,\Ka$,  so that
$$n_0\vee n_1<N \qquad \text{and} 
\qquad \ka_0\leq\ka_1.$$
Specifically, $n_0$ (resp. $n_1$) is  the   number of observations that need to be collected by the first opportunity to accept (resp. reject) $H_0$,  and  $N$ the maximum  number of observations that can be collected. Indeed, given these parameters, the test proceeds as follows:
\begin{enumerate}
\item[(i)]  $ n_0 \wedge  n_1$ observations are initially   collected.
\begin{itemize}
\item  If   $n_0 \leq n_1$ and   $\Stat_{n_0}\leq \ka_0$, then  $H_0$ is accepted.
\item If  $n_1 \leq n_0$ and   $\Stat_{n_1}>
\ka_1$,  then  $H_0$ is rejected.
\end{itemize}

\item[(ii)] If  the decision has not been reached yet,   
 $(n_0 \vee n_1)- (n_0 \wedge  n_1)$ additional observations are collected. 
\begin{itemize}
\item  If   $n_0 \leq n_1$ and   $\Stat_{n_1}>
\ka_1$,  then   $H_0$ is rejected.
\item  If   $n_1 \leq n_0$ and   $\Stat_{n_0}\leq \ka_0$, then $H_0$ is accepted.
\end{itemize}
\item[(iii)]   If the decision has not been reached yet,   $N-(n_0 \vee n_1)$ additional observations are collected and  $H_0$ is rejected if and only if $\Stat_N >\Ka$. 
\end{enumerate}

This testing procedure  can be implemented by collecting  at most three samples of deterministic sizes. Thus, in what follows  we refer to it  as the \textit{3-stage test} and denote it by $\three \equiv (\TST,\TSTd)$.

\subsubsection{Error control}
By the definition of the 3-stage test  it follows that, for any selection of its  parameters and any $\Pro \in \cP$, 
\begin{align}
\Pro(\TSTd=1) &\leq 
\Pro\left(\Stat_{n_1} > \ka_1 \right)
+\Pro\left(  \Stat_{N} >\Ka\right), \label{3ST, errors, rejecting}\\
\Pro(\TSTd=0) &\leq 
\Pro\left(\Stat_{n_0} \leq \ka_0\right)
        +\Pro \left( \Stat_{N}\leq \Ka \right). \label{3ST, errors, accepting}
\end{align}
Consequently,  by \eqref{3ST, errors, rejecting} with $\Pro=\Pro_0$ and  by \eqref{3ST, errors, accepting}  with $\Pro=\Pro_1$  we can see that 
 if the sample size and the threshold are  
\begin{equation} \label{n0,t0}
 n_0= \n(\gamma,\beta) \qquad \text{and} \qquad \ka_0=\thre(\gamma,\beta)  \qquad \text{for some} \quad \gamma \in (\alpha,1)
\end{equation}
in the first opportunity to accept $H_0$, 
\begin{equation} \label{n1,t1}
        n_1= \n(\alpha,\delta) \qquad \text{and} \qquad \ka_1=\thre(\alpha,\delta) \qquad \text{for some} \quad \delta \in (\beta,1)
\end{equation}
in the first opportunity to  reject $H_0$, and 
\begin{equation}  \label{N, c} 
    N =\n(\alpha,\beta) \qquad \text{and} \qquad  K=\thre(\alpha,\beta)
\end{equation}
in the final stage,  then 
\begin{equation*}  
\Pro_0(\TSTd=1) \leq 2\alpha \qquad \text{and} \qquad
\Pro_1(\TSTd=0) \leq 2\beta.
\end{equation*}

Thus, we have  shown the  following theorem.
\begin{theorem} \label{th:3ST_error_control}
Let $\alpha, \beta \in (0,1)$. If \eqref{n0,t0}-\eqref{N, c} hold with  $\alpha$ and $\beta$  replaced by   $\alpha/2$ and $\beta/2$ respectively,  then  $\three \in \cC(\alpha,\beta).$
\\
\end{theorem}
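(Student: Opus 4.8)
The plan is to derive the claim from two facts that are already available: the union-type upper bounds \eqref{3ST, errors, rejecting}--\eqref{3ST, errors, accepting} on the two error probabilities of the 3-stage test, which hold for \emph{any} choice of its six parameters and any $\Pro\in\cP$, and the defining property of $\n(\cdot,\cdot)$ and $\thre(\cdot,\cdot)$ recorded in \eqref{n*(alpha,beta)}.

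First I would establish \eqref{3ST, errors, rejecting}--\eqref{3ST, errors, accepting} by tracing through the description of the 3-stage test. The point is that, whether $n_0\leq n_1$ or $n_1\leq n_0$, a rejection of $H_0$ can occur only at the (unique) intermediate opportunity to reject, at which the relevant statistic and threshold are $\Stat_{n_1}$ and $\ka_1$, or at the final stage, at which they are $\Stat_N$ and $\Ka$; hence $\{\TSTd=1\}\subseteq\{\Stat_{n_1}>\ka_1\}\cup\{\Stat_N>\Ka\}$, and subadditivity of $\Pro$ yields \eqref{3ST, errors, rejecting}. Symmetrically, $\{\TSTd=0\}\subseteq\{\Stat_{n_0}\leq\ka_0\}\cup\{\Stat_N\leq\Ka\}$, which gives \eqref{3ST, errors, accepting}. (Equivalently, these bounds drop out of the exact expressions for the two error probabilities in the two cases $n_0\leq n_1$ and $n_1\leq n_0$.)

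Next, I would substitute the parameter choices. With \eqref{n0,t0}--\eqref{N, c} holding with $\alpha,\beta$ replaced by $\alpha/2,\beta/2$ — so $n_1=\n(\alpha/2,\delta)$, $\ka_1=\thre(\alpha/2,\delta)$ for some $\delta\in(\beta/2,1)$; $N=\n(\alpha/2,\beta/2)$, $\Ka=\thre(\alpha/2,\beta/2)$; and $n_0=\n(\gamma,\beta/2)$, $\ka_0=\thre(\gamma,\beta/2)$ for some $\gamma\in(\alpha/2,1)$ — the definition \eqref{n*(alpha,beta)} together with the choice of the corresponding thresholds gives $\Pro_0(\Stat_{n_1}>\ka_1)\leq\alpha/2$, $\Pro_0(\Stat_N>\Ka)\leq\alpha/2$, $\Pro_1(\Stat_{n_0}\leq\ka_0)\leq\beta/2$, and $\Pro_1(\Stat_N\leq\Ka)\leq\beta/2$. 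Evaluating \eqref{3ST, errors, rejecting} at $\Pro=\Pro_0$ then gives $\Pro_0(\TSTd=1)\leq\alpha/2+\alpha/2=\alpha$, and evaluating \eqref{3ST, errors, accepting} at $\Pro=\Pro_1$ gives $\Pro_1(\TSTd=0)\leq\beta/2+\beta/2=\beta$. By the definition \eqref{Class of binary tests} of $\cC(\alpha,\beta)$, this is precisely the assertion $\three\in\cC(\alpha,\beta)$.

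There is no genuine obstacle here: the entire content is the union bound combined with the defining property of $\n$ and $\thre$. The only place that calls for a moment of care is the derivation of the two set inclusions above, where one must verify that they hold regardless of whether $n_0\leq n_1$ or $n_1\leq n_0$, since the order of the two intermediate opportunities — and hence the bookkeeping of which statistic is compared with which threshold at each stage — depends on that comparison. It is also worth observing that the well-definedness constraints $n_0\vee n_1<N$ and $\ka_0\leq\ka_1$ play no role in the error analysis.
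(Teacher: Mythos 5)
Your proof is correct and follows essentially the same route as the paper: the paper likewise derives the union-type bounds \eqref{3ST, errors, rejecting}--\eqref{3ST, errors, accepting} from the definition of the test, evaluates them at $\Pro=\Pro_0$ and $\Pro=\Pro_1$ with the parameter choices \eqref{n0,t0}--\eqref{N, c}, and invokes the defining property of $\n$ and $\thre$ to bound each of the four terms by $\alpha/2$ or $\beta/2$. Your added remarks --- checking both orderings of $n_0$ and $n_1$, and noting that the constraints $n_0\vee n_1<N$ and $\ka_0\leq\ka_1$ are irrelevant to the error bound --- are accurate.
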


Theorem  \ref{th:3ST_error_control} specifies a design for  $\three \in \cC(\alpha, \beta)$  up to two  \textit{free} parameters,  $\gamma \in (\alpha/2,1)$ and $\delta \in (\beta/2,1)$.  Increasing the value of $\gamma$ (resp.  $\delta$) reduces the number of observations until the first opportunity to accept (resp. reject)  $H_0$, but  increases the probability of continuing to the final stage. To solve this  trade-off, we propose in Subsection \ref{subsec:3ST_free}   that   $\gamma$ (resp. $\delta$)  be selected to  minimize 
\textit{an upper bound on  $\Exp_0[\TST]$  (resp.  $\Exp_1[\TST]$) that is independent of $\delta$ (resp. $\gamma$)}.

\subsubsection{The average sample size}
By the definition of the 3-stage test it follows that, for any $\Pro \in \cP$,  
\begin{itemize}
\item if $n_0 \leq n_1<N$, then 
\begin{equation} \label{3ST, first case, ESS}
 \Exp[\TST] =    n_0 + (n_1-n_0) \cdot \Pro\left(\Stat_{n_0} > \ka_0\right) 
     + (N-n_1)\cdot \Pro\left(
     \begin{array}{c}
          \Stat_{n_0} > \ka_0  \\
          \Stat_{n_1} \leq \ka_1
     \end{array}
     \right),
     \end{equation}
\item if $n_1 \leq n_0<N$, then 
\begin{equation}
 \label{3ST, second case, ESS}
 \Exp[\TST] =    n_1 + (n_0-n_1) \cdot \Pro\left(\Stat_{n_1} \leq  
     \ka_1 \right) 
     + (N-n_0) \cdot \Pro\left(
     \begin{array}{c}
          \Stat_{n_1} \leq \ka_1  \\
          \Stat_{n_0} > \ka_0
     \end{array}
     \right).
     \end{equation}   
     \end{itemize}
Applying to these identities
the  basic inequalities:
$$\max \{0, \Pro(A)- \Pro(B^c)  \} \leq   \Pro(A \cap B)\leq \Pro(A),$$
we obtain, for any selection of the test parameters,  the following bounds:
\begin{equation} \label{3ST, ESS bound, null}
    \begin{split}
   \Exp[\TST]   &\geq  \; n_0 \cdot \Pro(\Stat_{n_1}\leq \ka_1) + (N-n_0) \cdot \left(\Pro(\Stat_{n_0}>\ka_0) - \Pro(\Stat_{n_1}>\ka_1) \right) ^+\\   \Exp[\TST] &\leq
     \; n_0 + (N-n_0) \cdot \Pro(\Stat_{n_0} > \ka_0) 
    \end{split}
\end{equation}
    and
\begin{equation} \label{3ST, ESS bound, alternative}
    \begin{split}
   \Exp[\TST]   &\geq     n_1 \cdot \Pro(\Stat_{n_0}>\ka_0) + (N-n_1) \cdot \big(\Pro(\Stat_{n_1}\leq \ka_1) - \Pro(\Stat_{n_0}\leq \ka_0) \big)^+ \\
 \Exp[\TST] 
      &  \leq \; n_1 + (N-n_1) \cdot \Pro(\Stat_{n_1} \leq \ka_1).
    \end{split}
\end{equation}

When, in particular,  the  test parameters   are selected as in Theorem \ref{th:3ST_error_control},   by \eqref{3ST, ESS bound, null} with $\Pro=\Pro_0$  we obtain 
\begin{align} \label{3ST, ESS bound, null, after plugin}
\begin{split}
    & n_0 \cdot (1-\alpha/2) + (N-n_0) \cdot  (\gamma-\alpha/2) \leq \Exp_0[\TST] \leq n_0 + (N-n_0) \cdot \gamma, \\
 & \text{where}   \quad    \gamma\in (\alpha/2,1),
\quad  n_0=\n(\gamma,\beta/2), \quad N=\n(\alpha/2,\beta/2),
\end{split}
 \end{align}
and   by  \eqref{3ST, ESS bound, alternative}  with $\Pro=\Pro_1$ we  obtain
\begin{align} \label{3ST, ESS bound, alternative, after plugin}
\begin{split}
  &  n_1 \cdot (1-\beta/2) + (N-n_1) \cdot (\delta - \beta/2 ) \leq \Exp_1[\TST] \leq n_1 + (N-n_1) \cdot \delta, \\ 
  & \text{where}  \quad \delta\in (\beta/2,1), \quad n_1=\n(\alpha/2,\delta), \quad N=\n(\alpha/2,\beta/2).
\end{split}
\end{align}

\subsubsection{Specification of the free parameters} \label{subsec:3ST_free}

For any selection of $\gamma$  (resp.  $\delta$) we can see that, at least when $\alpha$ (resp. $\beta$) is small, 
  the upper bound  in \eqref{3ST, ESS bound, null, after plugin} (resp. \eqref{3ST, ESS bound, alternative, after plugin})  is approximately equal to  the  lower bound and, as a result, it provides  an accurate approximation to $\Exp_0[\TST]$ (resp. $\Exp_1[\TST]$). Thus,  for any $\alpha, \beta \in (0,1)$,   we suggest selecting $\gamma$ and $\delta$ as
\begin{equation} \label{free_3ST}
\gamma=\tgamma \qquad \text{and} \qquad \delta=\tdelta,
\end{equation} 
where \textit{$\tgamma$ is a minimizer of the upper bound in  \eqref{3ST, ESS bound, null, after plugin}  and $\tdelta$  a minimizer of the upper bound  in \eqref{3ST, ESS bound, alternative, after plugin}}.

This selection of $\gamma$ (resp. $\delta$)  essentially minimizes $\Exp_0[\TST]$ (resp. $\Exp_1[\TST]$), at least when $\alpha$  (resp. $\beta$) is  small,  and it  is  practically convenient, as it requires the minimization  with respect to a single variable. Moreover, it  requires knowledge of only the function $n^*$,  defined in \eqref{n*(alpha,beta)},  which is also needed for the specification  of the other  test parameters according to Theorem \ref{th:3ST_error_control}.   \\

\myremark The  test of this section was proposed  in \cite{Lorden_1983} when $X$ is  an iid sequence  and  the test statistic, $T$, is the  corresponding average log-likelihood ratio. Our  setup here is essentially universal,  as  the only assumption  throughout this section about $X$ and $T$ is that  the corresponding fixed-sample-size test can control the error probabilities below arbitrary, user-specified levels, i.e., that $\n(\alpha, \beta)$ be  finite for any $\alpha, \beta \in (0,1)$.   At the same time,  we propose a concrete, non-asymptotic specification of the test parameters, which is novel and practically useful even in the setup of \cite[Section 2]{Lorden_1983}.

\subsection{The 4-stage tests} \label{sec:4ST}
Finally, we introduce and analyze two novel  tests, $\hat{\chi} \equiv (\FST,\FSTd)$ and  $\check{\chi} \equiv (\FFST,\FFSTd),$
which differ from that of the previous subsection only in that  the  first (resp. second) one allows for 
stopping  and  accepting (resp. rejecting)  the null hypothesis   if   the value of the test statistic, $T$,  after collecting  $N_0$  (resp. $N_1$) observations  is smaller (resp. larger) than  $\Ka_0$ (resp. $\Ka_1$), where
\begin{align*}
&n_0< N_0< N \qquad  \text{and} \qquad \Ka_0\leq \ka_1, \\
&n_1<N_1<N  \qquad \text{and} \qquad \ka_0\leq \Ka_1.
\end{align*}

Both these tests can be implemented by  collecting at most 4 samples of deterministic sizes, and for this reason we refer to  them as \textit{4-stage tests}.   
To avoid repetition, we present a detailed  analysis for  $\four$, and only state the corresponding results for  $\ffour$.  Thus,  given the above parameters,  $\four$ proceeds as follows:
\begin{enumerate}
    \item [(i)] $n_0\wedge n_1$ observations are initially collected.
    \begin{itemize}
        \item If $n_0\leq n_1$ and $\Stat_{n_0}\leq \ka_0$, then $H_0$ is accepted.
        \item If $n_1\leq n_0$ and $\Stat_{n_1}> \ka_1$, then $H_0$ is rejected.
    \end{itemize}
    \item [(ii)] If the decision has not been reached yet, $((n_0\vee n_1) \wedge N_0 )- (n_0\wedge n_1)$ additional observations are collected.
    \begin{itemize}
        \item If $n_0\leq n_1\leq N_0$ and $\Stat_{n_1}>\ka_1$, then $H_0$ is rejected.
        \item If $n_0\leq N_0\leq n_1$ and $\Stat_{N_0}\leq \Ka_0$, then $H_0$ is accepted.
        \item If $n_1\leq n_0\leq N_0$ and $\Stat_{n_0}\leq \ka_0$, then $H_0$ is accepted.
    \end{itemize}
    \item [(iii)]  If the decision has not been reached yet, $(n_1\vee N_0)-( (n_0\vee n_1) \wedge N_0)$ additional observations are collected. 
    \begin{itemize}
        \item If $n_1\leq N_0$ and $\Stat_{N_0}\leq \Ka_0$, then $H_0$ is accepted.
        \item If $N_0\leq n_1$ and $\Stat_{n_1}>\ka_1$, then $H_0$ is rejected.
    \end{itemize}
    \item [(iv)]  If the decision has not been reached yet,  
    $N-(n_1\vee N_0)$ additional observations are collected and $H_0$ is rejected if and only if $\Stat_N>\Ka$.
\end{enumerate}

\subsubsection{Error control}
By the definition of   $\hat{\chi}$  it follows that,
for any selection of its parameters  and any $\Pro \in \cP$,
\begin{align}
\Pro(\FSTd=1) & \leq \Pro\left(\Stat_{n_1}> \ka_1\right)
+
\Pro\left(\Stat_N>\Ka\right),  \label{4ST, error, I} \\
\Pro(\FSTd=0) & \leq \Pro\left(\Stat_{n_0} \leq \ka_0\right) +
\Pro\left(\Stat_{N_0} \leq \Ka_0\right) +
\Pro\left(\Stat_N \leq \Ka\right). \label{4ST, error, II}
\end{align} 
Therefore, if  $n_0, n_1, N$,  $\ka_0,  \ka_1, K$ are selected as in \eqref{n0,t0}--\eqref{N, c}  and we also set 
\begin{align} \label{N0, c0} 
         N_0 &=n^*(\gammap,\beta) \qquad \text{and} \qquad \Ka_0=c^*(\gammap,\beta) \qquad  \text{for some} \quad \gammap\in (\alpha,\gamma),  
\end{align}
 by \eqref{4ST, error, I}  with $\Pro=\Pro_0$ and by  \eqref{4ST, error, II} with $\Pro=\Pro_1$ we obtain
\begin{equation*}  
\Pro_0(\FSTd=1)\leq  2\alpha \qquad \text{and} \qquad \Pro_1(\FSTd=0) \leq 3\beta.
\end{equation*}

With a similar analysis we obtain 
$$
\Pro_0(\FFSTd=1)\leq  3\alpha \qquad \text{and} \qquad \Pro_1(\FFSTd=0) \leq 2\beta,
$$
when  $n_0, n_1, N$,  $\ka_0,  \ka_1, K$ are selected as in \eqref{n0,t0}--\eqref{N, c}  and also 
\begin{equation} \label{N1, c1}
    N_1=\n(\alpha,\deltap) \quad \text{ and } \quad \Ka_1=\thre(\alpha,\deltap) \quad \text{ for some } \deltap\in (\beta,\delta).
\end{equation}

Thus, we have shown  the following theorem.
\begin{theorem} \label{th:4ST_error_control}
Let $\alpha, \beta \in (0,1)$.
\begin{enumerate}
\item[(i)]  If \eqref{n0,t0}--\eqref{N, c} and \eqref{N0, c0}  hold with  $\alpha$ and $\beta$  replaced by   $\alpha/2$ and $\beta/3$ respectively,  then
$\four  \in \cC(\alpha,\beta)$. 
\item[(ii)] If  \eqref{n0,t0}--\eqref{N, c} and \eqref{N1, c1} hold with $\alpha$ and $\beta$ replaced by $\alpha/3$ and $\beta/2$ respectively,  then $\ffour \in \cC(\alpha,\beta)$. \\ 
\end{enumerate}
\end{theorem}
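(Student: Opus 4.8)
The plan is to derive both parts of the theorem from subadditive bounds on the error probabilities of the two tests, combined with the defining property \eqref{n*(alpha,beta)} of $\n$ and $\thre$. For $\four$ the relevant bounds are exactly \eqref{4ST, error, I}--\eqref{4ST, error, II}, which I would take as already established, so the only genuine work is to record the analogous bounds for $\ffour$ and then to substitute the prescribed parameter values.

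For $\ffour$ I would inspect its decision rule stage by stage, using that it offers two opportunities to accept $H_0$ (at sample sizes $n_0$ and $N$) and three to reject it (at $n_1$, $N_1$ and $N$). Every decision that is not made in the final stage is forced by one of the crossing events $\{\Stat_{n_0}\le\ka_0\}$ (accept), $\{\Stat_{n_1}>\ka_1\}$ (reject), $\{\Stat_{N_1}>\Ka_1\}$ (reject), while the final stage decides according to $\{\Stat_N>\Ka\}$; hence $\{\FFSTd=1\}\subseteq\{\Stat_{n_1}>\ka_1\}\cup\{\Stat_{N_1}>\Ka_1\}\cup\{\Stat_N>\Ka\}$ and $\{\FFSTd=0\}\subseteq\{\Stat_{n_0}\le\ka_0\}\cup\{\Stat_N\le\Ka\}$, so that for every $\Pro\in\cP$
\begin{align*}
\Pro(\FFSTd=1) &\le \Pro(\Stat_{n_1}>\ka_1)+\Pro(\Stat_{N_1}>\Ka_1)+\Pro(\Stat_N>\Ka),\\
\Pro(\FFSTd=0) &\le \Pro(\Stat_{n_0}\le\ka_0)+\Pro(\Stat_N\le\Ka),
\end{align*}
by finite subadditivity of $\Pro$. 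Verifying these inclusions (and, if one does not wish to black-box them, those behind \eqref{4ST, error, I}--\eqref{4ST, error, II}) amounts to a finite case split over the possible orderings of the stage sizes — for $\four$ the cases $n_0\le n_1\le N_0$, $n_0\le N_0\le n_1$, $n_1\le n_0\le N_0$, and the mirror cases for $\ffour$ — in each of which the inclusion is immediate. This bookkeeping is the closest thing to an obstacle in the argument.

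With the bounds in hand the rest is a substitution. For part (i), take $\Pro=\Pro_0$ in \eqref{4ST, error, I}: under the stated design $n_1=\n(\alpha/2,\delta)$, $\ka_1=\thre(\alpha/2,\delta)$ and $N=\n(\alpha/2,\beta/3)$, $\Ka=\thre(\alpha/2,\beta/3)$, so \eqref{n*(alpha,beta)} bounds each of $\Pro_0(\Stat_{n_1}>\ka_1)$ and $\Pro_0(\Stat_N>\Ka)$ by $\alpha/2$, giving $\Pro_0(\FSTd=1)\le\alpha$. Taking $\Pro=\Pro_1$ in \eqref{4ST, error, II} and using $n_0=\n(\gamma,\beta/3)$, $N_0=\n(\gammap,\beta/3)$, $N=\n(\alpha/2,\beta/3)$, each of the three terms is at most $\beta/3$, giving $\Pro_1(\FSTd=0)\le\beta$; by \eqref{Class of binary tests}, $\four\in\cC(\alpha,\beta)$. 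Part (ii) is the same computation with the two error probabilities interchanged: the three-term bound for $\Pro_0(\FFSTd=1)$ together with the $\alpha/3$-design yields $\Pro_0(\FFSTd=1)\le\alpha$, and the two-term bound for $\Pro_1(\FFSTd=0)$ together with the $\beta/2$-design yields $\Pro_1(\FFSTd=0)\le\beta$, so $\ffour\in\cC(\alpha,\beta)$.

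Finally I would check the internal consistency of the design, i.e.\ that the structural constraints built into the definitions of $\four$ and $\ffour$ — $n_0<N_0<N$ and $\Ka_0\le\ka_1$ for $\four$, and $n_1<N_1<N$ and $\ka_0\le\Ka_1$ for $\ffour$ — are compatible with the choices $n_0=\n(\gamma,\cdot)$, $N_0=\n(\gammap,\cdot)$, $N=\n(\alpha/2,\cdot)$ with $\alpha/2<\gammap<\gamma$ (and likewise for $N_1$). This needs only monotonicity of $\n(\cdot,\beta')$ and $\thre(\cdot,\beta')$ in the first argument; since $\n$ is merely non-increasing, the strict inequalities might degenerate for some parameter values, but then two stages simply coalesce, the test stays well defined, and the error-control conclusion — which is all that the theorem asserts — is unaffected.
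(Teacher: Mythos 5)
Your proposal is correct and follows essentially the same route as the paper: the union bounds \eqref{4ST, error, I}--\eqref{4ST, error, II} (and their mirror images for $\ffour$, with three rejection terms and two acceptance terms) combined with the defining property \eqref{n*(alpha,beta)} of $\n$ and $\thre$ under the $\alpha/2,\beta/3$ (resp.\ $\alpha/3,\beta/2$) substitution. The consistency check of the stage-size ordering at the end is a sensible addition that the paper leaves implicit, but it changes nothing of substance.
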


Theorem  \ref{th:4ST_error_control}   specifies  designs for  $\four$   and $\ffour$, which  guarantee the desired error control,  up  to three  \textit{free} parameters, $\gamma,\gammap,\delta$ and $\gamma,\delta,\deltap$ respectively. We  next propose a specific selection for these parameters,  similar to the one for the free parameters of the  3-stage test in Subsection \ref{subsec:3ST_free}.

\subsubsection{The average sample size}
By the definition of   $\four$ it follows that, for any  $\Pro \in\cP$,  
\begin{itemize}
\item if  $n_0\leq n_1 \leq  N_0 \leq N$, then 
\begin{align}
   \Exp[\FST] &=  n_0 +(n_1-n_0) \cdot \Pro\left(\Stat_{n_0} > \ka_0\right)  +(N_0-n_1)\cdot \Pro\left(
        \begin{array}{c}
             \Stat_{n_0} > \ka_0  \\
             \Stat_{n_1}\leq \ka_1
        \end{array}
        \right) \nonumber \\
        &+(N-N_0) \cdot \Pro \left(
        \begin{array}{c}
            \Stat_{n_0} > \ka_0 \\
            \Stat_{n_1} \leq \ka_1 \\
            \Stat_{N_0} > \Ka_0 \\
        \end{array}
        \right),
   \label{4ST, first case, ESS}
\end{align}
\item if  $n_0 \leq N_0 \leq n_1  \leq N$, then
\begin{align}
     \Exp [\FST] &=   n_0 +(N_0-n_0) \cdot \Pro\left(\Stat_{n_0} > \ka_0\right) +(n_1-N_0) \cdot \Pro\left(
     \begin{array}{c}
          \Stat_{n_0}> \ka_0  \\
          \Stat_{N_0}> \Ka_0
     \end{array}
     \right)  \nonumber \\
     &+(N-n_1) \cdot \Pro \left(
     \begin{array}{c}
          \Stat_{n_0} > \ka_0 \\
          \Stat_{N_0} > \Ka_0 \\
          \Stat_{n_1} \leq \ka_1
     \end{array}
     \right),
\end{align}
\item if $n_1\leq n_0 \leq  N_0 \leq N$, then 
\begin{align}
   \Exp  [\FST]  &=     n_1 +(n_0-n_1) \cdot \Pro\left(\Stat_{n_1} \leq \ka_1\right) +(N_0-n_0) \cdot\Pro\left(
        \begin{array}{c}
             \Stat_{n_1} \leq \ka_1  \\
             \Stat_{n_0} > \ka_0
        \end{array}
        \right) \nonumber \\
        &+(N-N_0) \cdot \Pro \left(
        \begin{array}{c}
            \Stat_{n_1} \leq \ka_1 \\
            \Stat_{n_0} > \ka_0 \\
            \Stat_{N_0} > \Ka_0 \\
        \end{array}
        \right).
\label{4ST, first case, ESS, n1<=n0}
\end{align}
\end{itemize}
Applying to the above identities 
 the following basic  inequalities:
$$
\max \{\Pro(A)- \Pro(B^c)- \Pro(C^c), 0\}  \leq \Pro(A \cap B \cap C) \leq \Pro(A),
$$
we obtain,  for any selection of the test parameters,    the following bounds: 
\begin{equation} \label{4ST, ESS bound, for the null}
    \begin{split}
        \Exp[\FST] & \geq  \; n_0 \cdot \Pro(\Stat_{n_1}\leq \ka_1) + (N_0-n_0) \cdot \big( \Pro(\Stat_{n_0}>\ka_0) - \Pro(\Stat_{n_1}>\ka_1) \big) \\
        & \quad + (N-N_0) \cdot \left( \Pro(T_{N_0}> K_0)-\Pro(T_{n_0}\leq \ka_0) - \Pro(T_{n_1}> \ka_1) \right)^+ , \\
        \Exp[\FST] &\leq  \; n_0 + (N_0-n_0) \cdot  \Pro(\Stat_{n_0} >\ka_0) + (N-N_0) \cdot \Pro(\Stat_{N_0}>\Ka_0)
    \end{split}
\end{equation}
and 
\begin{equation} \label{4ST, ESS bound, for the alternative}
    \begin{split}
       \Exp[\FST]    &\geq   \; n_1  \cdot \big( \Pro(\Stat_{n_0}>\ka_0) - \Pro(\Stat_{N_0}\leq \Ka_0) \big)  \\
        &+ \; (N-n_1) \cdot \big( \Pro(\Stat_{n_1}\leq \ka_1) - \Pro(\Stat_{n_0}\leq \ka_0) - \Pro(\Stat_{N_0}\leq \Ka_0) \big) \\
       \Exp[\FST] & \leq  \; n_1 + (N-n_1) \cdot \Pro(\Stat_{n_1}\leq \ka_1).
    \end{split}
\end{equation}

When, in particular, the parameters  of $\four$  are selected as in  Theorem \ref{th:4ST_error_control}.(i),   by \eqref{4ST, ESS bound, for the null} with $\Pro=\Pro_0$ we obtain
\begin{align} \label{4ST, ESS bound, after plugin, null}
    \begin{split}
\Exp_0[\FST]  &\leq  n_0 + (N_0-n_0) \cdot  \gamma + (N-N_0) \cdot \gammap ,\\
 \Exp_0[\FST]  &\geq n_0 \cdot  (1-\alpha/2) + (N_0-n_0) \cdot  (\gamma-\alpha/2) \\
 &+  (N-N_0)\cdot ( (1-\alpha/2) - ( 1-\gamma)-(1-\gamma') )^+ , \\
  \text{where}   \quad &\alpha/2<\gammap<\gamma<1, \quad  n_0=\n(\gamma,\beta/3), \; \\
&   N_0=\n(\gammap,\beta/3),  \quad  N=\n(\alpha/2,\beta/3),
    \end{split}
\end{align}
and   by \eqref{4ST, ESS bound, for the alternative} with $\Pro=\Pro_1$ we  obtain
\begin{align} \label{4ST, ESS bound, after plugin, alternative}
    \begin{split}
      & n_1 \cdot (1-2\beta/3) + (N-n_1) \cdot (\delta-2\beta/3)    \leq     \Exp_1[\FST]  \leq n_1 + (N-n_1) \cdot \delta
      \\
& \text{where} \quad  \delta\in (\beta/3,1), \quad 
n_1 =\n(\alpha/2,\delta), \quad
     N=\n(\alpha/2,\beta/3).
    \end{split}
\end{align}

With  a similar analysis it follows that when the parameters of  $\ffour$ are selected according to Theorem \ref{th:4ST_error_control}.(ii), then  
\begin{equation} \label{44ST, ESS bound, after plugin, null}
    \begin{split}
& n_0 \cdot  (1-2\alpha/3) + (N-n_0) \cdot (\gamma-2\alpha/3) \leq 
        \Exp_0[\FFST] \leq n_0 + (N-n_0) \cdot \gamma,\\
&\text{where}  \quad \gamma\in (\alpha/3,1), \quad 
n_0=\n(\gamma,\beta/2), \quad N=\n(\alpha/3,\beta/2),
    \end{split}
\end{equation}
and
\begin{align} \label{44ST, ESS bound, after plugin, alternative}
    \begin{split}
        \Exp_1[\FFST]         &\leq n_1 + (N_1-n_1) \cdot \delta + (N-N_1) \cdot \deltap,  \\
  \Exp_1[\FFST] &\geq     n_1 \, (1-\beta/2) + (N_1-n_1) \, (\delta-\beta/2) \\
  &+ (N-N_1)\cdot ((1-\beta/2)- (1-\delta)-(1-\delta') )^+, \\
\text{where} \quad & \beta/2<\deltap<\delta<1, \quad   n_1 =\n(\alpha/3,\delta), \\ 
    &N_1 =\n(\alpha/3,\deltap), \quad N=\n(\alpha/3,\beta/2).
    \end{split}
\end{align}

\subsubsection{Specification of the free parameters} \label{subsec:4ST_free}

 For any $\alpha, \beta \in (0,1)$, we propose selecting the free parameters of
  $\hat{\chi}$ as 
\begin{equation} \label{free_4ST_hat}\delta=\hat{\delta}, \quad  \gamma=\hat{\gamma}, \quad
 \gammap=\hat{\gammap}, 
 \end{equation}
where  \textit{$(\hat{\gamma}, \hat{\gammap})$ is
a minimizer of the upper bound in \eqref{4ST, ESS bound, after plugin, null} and $\hat{\delta}$  a minimizer of  the upper bound in \eqref{4ST, ESS bound, after plugin, alternative}}, and the free parameters of  $\check{\chi}$  as 
\begin{equation} \label{free_4ST_check}
\gamma=\check{\gamma}, \quad  \delta=\check{\delta}, \quad
 \deltap=\check{\deltap}, 
 \end{equation}
where  \textit{$\check{\gamma}$  is a minimizer of  the upper bound in \eqref{44ST, ESS bound, after plugin, null} and   ($\check{\delta},\check{\deltap}$)  a minimizer  of the upper bound in  \eqref{44ST, ESS bound, after plugin, alternative}.}  \\

\myremark 
Comparing  with the corresponding  results for the 3-stage test, we can see that,  at least when $\alpha$ (resp. $\beta$) is small,    $\hat{\delta}$  (resp. $\check{\gamma}$)  is close to    $\tdelta$  (resp. $\tgamma$), and  the  expected sample size of $\four$  (resp. $\ffour$) close to that of $\three$ under $\Pro_1$ (resp. $\Pro_0$).  Indeed, the additional stage in $\four$  (resp. $\ffour$)  is useful mainly for  reducing the expected sample size   under $\Pro_0$ (resp. $\Pro_1$).  This reduction is  illustrated  numerically  in  Figures  \ref{Figure: ratios} and \ref{Figure: performance against parameter}.  

\section{Asymptotic analysis.} \label{sec:asy}
In  this section we obtain asymptotic bounds and approximations, as $\alpha, \beta \to 0$,  to the  expected sample sizes of the multistage tests of the previous sections. 
For this analysis, we  need to   impose some structure on the 
 almost universal setup we have considered so far.
 

\subsection{Assumptions on the testing problem} \label{subsec: assumptions on the test prob}
Throughout this section, we assume that for every $ n \in \bN$, $\Pro_0$ and $\Pro_1$ are mutually absolutely continuous when restricted to $\cF_n$, and denote by $\Lambda\equiv\{\Lambda_n,\, n\in\bN\}$ and $\bar\Lambda\equiv\{\bar\Lambda_n,\, n\in\bN\}$ 
the corresponding  log-likelihood ratio and average log-likelihood ratio statistics, i.e., 
\begin{equation} \label{def: llr}  
\Lambda_n 
\equiv  \log \frac{d\Pro_1}{d\Pro_0} (\cF_n), \quad \bar\Lambda_n\equiv \Lambda_n/n, \quad n\in\bN.
\end{equation} 

We assume that there are  numbers  $I_0, I_1>0$  such that 
\begin{align} 
&     \Pro_0(\bar\Lambda_n\to -I_0)=\Pro_1(\bar\Lambda_n\to I_1)=1,  \label{SLLN on LLR} \\
\forall \;  \epsilon>0, \qquad   & \sum_{n=1}^\infty \Pro_0(\bar\Lambda_n >-I_0+\epsilon) +
  \sum_{n=1}^\infty \Pro_1(\bar\Lambda_n \leq I_1-\epsilon)  < \infty. \label{summability}
\end{align}
These assumptions  imply   (see, e.g., \cite[Lemma 3.4.1,  Theorem 3.4.2]{Tartakovsky_Book}) an asymptotic approximation, as $\alpha,\beta\to 0$,  to
$\cL_i(\alpha,\beta)$, $i\in\{0,1\}$, defined in \eqref{optimal}.
Specifically, 
as $\alpha,\beta\to 0$, 
\begin{align}  \label{optimal_rate}
\Exp_0[\SPRT] &\sim \cL_0(\alpha, \beta) \sim \frac{|\log\beta|}{ I_0}  \qquad \text{and} \qquad 
\Exp_1[\SPRT] \sim\cL_1(\alpha, \beta) \sim \frac{|\log\alpha|}{ I_1}, 
\end{align} 
where $\chi'\equiv (\SPRT, \SPRTd)$ is Wald's SPRT, i.e., 
\begin{equation} \label{Definition of SPRT}
    \SPRT\equiv \inf\{n \in \bN: \Lambda_n \notin (-A, B)\} \qquad \text{and} \qquad \SPRTd \equiv 1\{\Lambda_{\SPRT} \geq B\},
\end{equation}
with $A$ and $B$ selected, for example,  as  $A=|\log\beta|$ and  $B=|\log\alpha|$.

\subsubsection{The iid setup} \label{subsec: iid_setup}
When $X$ is  an iid sequence   with common  density  $f_i$ under $\Pro_i$  with respect to some dominating measure $\nu$, $i \in \{0,1\}$,  and  the Kullback-Leibler divergences are positive and finite, i.e.,
\begin{align} \label{KL divergences are positive}
\begin{split}
   D(f_0\| f_1) &\equiv \int \log (f_0/f_1) f_0 \, d\nu,  \\   
       D(f_1\| f_0) &\equiv \int \log (f_1/f_0) f_1 \, d\nu \in (0,\infty),
       \end{split}
\end{align}
then  the log-likelihood ratio statistic in \eqref{def: llr}  becomes 
\begin{align} \label{LLR_iid_setup}\Lambda_n &= \sum_{i=1}^n  \frac{f_1(X_i)}{ f_0(X_i)}, \quad  n\in \bN, \end{align}
and  
\eqref{SLLN on LLR}-\eqref{summability}  hold with $I_0=D(f_0\| f_1)$ and $I_1= D(f_1\| f_0)$ (for more details,  see  Subsection \ref{subsec:iid}). \\


\subsection{Assumptions on the test statistic} \label{subsec: assumptions on the test stat}
With respect to  the test statistic, $T$,  throughout this section we assume  that there are real numbers   $J_0, J_1$,  with  $J_0<J_1$, so that   
    \begin{equation} \label{a.s. limits of eta}
  \Pro_0(\Stat_n\to J_0)=\Pro_1(\Stat_n\to J_1)=1, 
    \end{equation}
and, for every  $\ka \in (J_0, J_1)$, the error probabilities of the fixed-sample-size test that rejects $H_0$ if and only if $T_n > \ka$ go to zero exponentially fast in $n$. Specifically,  we assume that  there are non-negative,  convex,   lower-semicontinuous functions  
$$\psi_0: \bR \to [0,\infty] \qquad \text{and} \qquad
\psi_1: \bR \to [0,\infty],$$
so that 
\begin{itemize}
\item[-]
$[J_0,J_1]$ is a subset of the effective domains of both  $\psi_0$ and $\psi_1$, 
\item[-] $\psi_0(J_0)=0$ and $\psi_0$ is strictly increasing in $[J_0,J_1]$,
\item[-]  $\psi_1(J_1)=0$ and $\psi_1$ is strictly decreasing in $[J_0,J_1]$, 
\item[-]  for every $ \ka\in (J_0,J_1)$,
    \begin{align}
    &  \lim_n \frac{1}{n} \log \Pro_0(\Stat_n >\ka) = -\psi_0(\ka), \label{LD, non-LLR_null}\\
    & \lim_n\frac{1}{n} \log \Pro_1(\Stat_n \leq \ka)  = -\psi_1(\ka). \label{LD, non-LLR_alternative} 
    \end{align}
    \end{itemize}

\noindent \underline{\textbf{Remarks}:} 
1) When $T_n=\bar\Lambda_n$, \eqref{a.s. limits of eta} is the same as \eqref{SLLN on LLR} and \eqref{LD, non-LLR_null}-\eqref{LD, non-LLR_alternative} imply \eqref{summability}, with $J_0=-I_0$ and $J_1=I_1$. \\

\noindent 2)  In Section \ref{sec:sufficient} we  state sufficient conditions for the existence of functions $\psi_0$ and $\psi_1$ that satisfy \eqref{LD, non-LLR_null}-\eqref{LD, non-LLR_alternative}, which we also specify.    In  Section \ref{sec:examples} we show that these sufficient conditions are satisfied in  various  testing problems and for different statistics. The graphs of $\psi_0$ and $\psi_1$  in each of these examples are plotted in Figures \ref{rate 1}, \ref{rate 2},   \ref{rate 3}. \\

\noindent 3) In the iid setup  of Subsection \ref{subsec: iid_setup}, the above   assumptions  hold  when  $\Stat=\bar\Lambda$ as long as \eqref{KL divergences are positive} holds
(see Subsection \ref{subsec:iid}). \\

\noindent 4)  By assumption, the function
\begin{equation} \label{def of g}
    g(\ka) \equiv  \frac{\psi_0(\ka)}{\psi_1(\ka)}, \quad \ka\in (J_0,J_1)
\end{equation}
 is continuous and strictly increasing with $
g(J_0+)=0$ and  $g(J_1-) = \infty$. As a result,  its inverse, $g^{-1}$,  is well-defined in $(0, \infty)$ and  satisfies
\begin{equation} \label{range of  g_inverse}
g^{-1} (0, \infty)=  (J_0,J_1). \\
\end{equation}

\noindent 5)   The above assumptions will suffice for obtaining first-order  asymptotic \emph{upper} bounds on the  expected sample sizes of  the proposed multistage tests under  $\Pro_0$ and $\Pro_1$ as $\alpha,\beta\to 0$. When $T=\bar\Lambda$, 
they will also suffice for obtaining the matching  \emph{lower} bounds.  However, in order to obtain such lower bounds when $T \neq \bar\Lambda$, we will  need to additionally assume that 
\begin{align} \label{extra assumption}
    \begin{split}
        & \exists \; \textit{a neighborhood of $J_1$ in which $\psi_0$ is finite and  \eqref{LD, non-LLR_null} holds} \\
&  \exists    \;  \textit{a neighborhood of $J_0$ in which $\psi_1$ is finite and  \eqref{LD, non-LLR_alternative} holds.}
    \end{split}
\end{align}
In Section \ref{sec:sufficient} we  also  state sufficient conditions  for \eqref{extra assumption}, which  hold for all test statistics, different from $\bar{\Lambda}$, that  we consider in  Section \ref{sec:examples}.\\

\subsection{Asymptotic analysis for the  
 fixed-sample-size test} \label{subsec: FSS}
    
The asymptotic analysis for the proposed  multistage tests is  based on asymptotic bounds and  approximations for $\n(\alpha,\beta)$ 
\textit{as at least one of $\alpha$ and $\beta$ goes to 0, while the other one either goes to 0 as well or remains fixed}. When  any of these asymptotic regimes holds, we  simply write $\alpha \wedge \beta \to 0$.

\subsubsection{Asymptotic bounds}

\begin{theorem} \label{th:asymptotic of FSS with t}
As   $\alpha \wedge \beta \to 0$,
\begin{equation} \label{Eq. ALB and AUB}
\min\left\{ \frac{|\log\beta|}{\psi_1(\ka)}, \, \frac{|\log\alpha|}{\psi_0(\ka)} \right\} \lesssim    \n(\alpha,\beta) \lesssim \max\left\{ \frac{|\log\beta|}{\psi_1(\ka)}, \, \frac{|\log\alpha|}{\psi_0(\ka)} \right\}
\end{equation}
for every $\ka \in (J_0, J_1)$,  and  consequently 
\begin{equation} \label{C}
  \n(\alpha,\beta) \lesssim \frac{|\log(\alpha \wedge \beta)|}{C},\quad \text{ where } \quad  C \equiv \sup_{\ka \in (J_0, J_1) }  \left\{ \psi_1(\ka) \wedge  \psi_0(\ka) \right\}.
\end{equation}
\end{theorem}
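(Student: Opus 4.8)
The plan is to prove \eqref{Eq. ALB and AUB} by separately establishing the asymptotic upper bound (AUB) and the asymptotic lower bound (ALB), and then to derive \eqref{C} as an immediate consequence of the AUB. Throughout, fix $\ka \in (J_0, J_1)$; since $\psi_0(\ka) > \psi_0(J_0) = 0$ and $\psi_1(\ka) > \psi_1(J_1) = 0$ by the strict monotonicity assumptions, both denominators appearing in the bounds are strictly positive and finite (as $[J_0,J_1]$ lies in the effective domains of $\psi_0,\psi_1$), so the ratios are well-defined for all small $\alpha,\beta$.

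For the upper bound, the idea is to exhibit a feasible sample size. Fix a small $\epsilon > 0$. By the large-deviation limits \eqref{LD, non-LLR_null}--\eqref{LD, non-LLR_alternative}, there is an $n_\epsilon$ such that for all $n \geq n_\epsilon$ we have $\Pro_0(\Stat_n > \ka) \leq \exp(-n(\psi_0(\ka)-\epsilon))$ and $\Pro_1(\Stat_n \leq \ka) \leq \exp(-n(\psi_1(\ka)-\epsilon))$. Hence the fixed-sample-size test with threshold $\ka$ and sample size $n$ lies in $\cC(\alpha,\beta)$ as soon as $n \geq n_\epsilon$, $n(\psi_0(\ka)-\epsilon) \geq |\log\alpha|$, and $n(\psi_1(\ka)-\epsilon) \geq |\log\beta|$; that is, as soon as $n$ is at least
\[
n_{\alpha,\beta} \equiv \max\left\{ n_\epsilon, \; \left\lceil \frac{|\log\alpha|}{\psi_0(\ka)-\epsilon} \right\rceil, \; \left\lceil \frac{|\log\beta|}{\psi_1(\ka)-\epsilon} \right\rceil \right\}.
\]
By definition \eqref{n*(alpha,beta)} of $\n(\alpha,\beta)$ as the smallest feasible sample size (over all thresholds), $\n(\alpha,\beta) \leq n_{\alpha,\beta}$. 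As $\alpha\wedge\beta\to 0$, at least one of $|\log\alpha|,|\log\beta|$ tends to infinity, so $n_{\alpha,\beta}$ is eventually governed by the maximum of the two ceiling terms, giving $\n(\alpha,\beta) \lesssim \max\{|\log\alpha|/(\psi_0(\ka)-\epsilon),\,|\log\beta|/(\psi_1(\ka)-\epsilon)\}$. Letting $\epsilon\downarrow 0$ (the bound holding for every $\epsilon$, with the implicit limsup inequality stable under this limit) yields the right-hand inequality of \eqref{Eq. ALB and AUB}.

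For the lower bound, I would argue by contradiction against feasibility of sample sizes that are too small. Suppose, along some subsequence, $\n(\alpha,\beta) \leq (1-2\epsilon) \min\{|\log\alpha|/\psi_0(\ka),\,|\log\beta|/\psi_1(\ka)\}$ for a fixed small $\epsilon>0$; write $n = \n(\alpha,\beta)$ and let $\ka' = \thre(\alpha,\beta)$ be a corresponding threshold, so $\Pro_0(\Stat_n > \ka') \leq \alpha$ and $\Pro_1(\Stat_n \leq \ka') \leq \beta$. The key point is that $\ka'$ cannot escape $(J_0,J_1)$ for small error probabilities: if $\ka' \geq J_1 - \delta$ for small $\delta$, then since $\Stat_n \to J_0$ under $\Pro_0$ (a.s., hence in probability) one still has a lower bound on $\Pro_0(\Stat_n>\ka')$ via the LD rate at a point in $(J_0,J_1)$ near $\ka'$, and symmetrically if $\ka' \leq J_0 + \delta$; combined with monotonicity of the rate functions this forces $\ka'$ into a compact subinterval $[J_0 + \delta_0, J_1 - \delta_0]$ for all small $\alpha,\beta$. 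On that compact interval, using monotonicity of $\psi_0$ (increasing) and $\psi_1$ (decreasing) together with \eqref{LD, non-LLR_null}--\eqref{LD, non-LLR_alternative}, one gets that for $n$ large, $\Pro_0(\Stat_n > \ka') \geq \exp(-n(\psi_0(\ka')+\epsilon'))$ when $\ka' < $ some value and $\Pro_1(\Stat_n \leq \ka') \geq \exp(-n(\psi_1(\ka')+\epsilon'))$ otherwise — and one of these two must be large enough to violate the constraint $\leq \alpha$ or $\leq \beta$ given the assumed smallness of $n$. Making this precise: either $\psi_0(\ka') \leq \psi_0(\ka)$, in which case the type-I constraint $\alpha \geq \Pro_0(\Stat_n > \ka') \geq \exp(-n(\psi_0(\ka')+\epsilon))$ forces $n \geq |\log\alpha|/(\psi_0(\ka')+\epsilon) \geq |\log\alpha|/(\psi_0(\ka)+\epsilon)$, contradicting the assumed bound once $\epsilon$ is small; or $\psi_0(\ka') > \psi_0(\ka)$, which by $g$ strictly increasing means $\psi_1(\ka') < \psi_1(\ka)$, and then the type-II constraint forces $n \geq |\log\beta|/(\psi_1(\ka)+\epsilon)$, again a contradiction. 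This establishes the left-hand inequality of \eqref{Eq. ALB and AUB}.

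Finally, \eqref{C} follows by optimizing the AUB over $\ka$: for each $\ka \in (J_0,J_1)$, $\n(\alpha,\beta) \lesssim \max\{|\log\alpha|/\psi_0(\ka),\,|\log\beta|/\psi_1(\ka)\} \leq |\log(\alpha\wedge\beta)|/(\psi_0(\ka)\wedge\psi_1(\ka))$, since $|\log\alpha| \leq |\log(\alpha\wedge\beta)|$ and similarly for $\beta$, and $\max\{u/a,\,v/b\} \leq \max\{u,v\}/\min\{a,b\}$ for positive numbers. Taking the infimum over $\ka$ of the constant $(\psi_0(\ka)\wedge\psi_1(\ka))^{-1}$, equivalently the supremum of $\psi_0(\ka)\wedge\psi_1(\ka)$, gives $\n(\alpha,\beta) \lesssim |\log(\alpha\wedge\beta)|/C$; one should note $C \in (0,\infty]$ is attained in the limit at the crossing point of $\psi_0$ and $\psi_1$ by continuity, and the $\lesssim$ relation is preserved under this optimization because for any $\ka$ the bound holds. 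The main obstacle I anticipate is the lower-bound argument, specifically controlling the threshold $\ka' = \thre(\alpha,\beta)$ — ruling out that the optimal fixed-sample-size threshold drifts toward $J_0$ or $J_1$ (where the rate functions may blow up or where \eqref{LD, non-LLR_null}--\eqref{LD, non-LLR_alternative} are not assumed) and thereby evades the large-deviation lower estimates; handling this cleanly is where the a.s.\ limits \eqref{a.s. limits of eta} and the structural monotonicity of $\psi_0,\psi_1$ do the real work.
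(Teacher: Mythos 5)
Your proposal is correct and follows essentially the same route as the paper: both bounds reduce to comparing the unknown optimal threshold $\thre(\alpha,\beta)$ with the fixed $\ka\in(J_0,J_1)$ via the observation that moving a threshold can only increase one of $\Pro_0(T_n>\cdot)$ and $\Pro_1(T_n\leq\cdot)$, and then invoking the pointwise limits \eqref{LD, non-LLR_null}--\eqref{LD, non-LLR_alternative} at the fixed $\ka$ together with $\n(\alpha,\beta)\to\infty$ (the paper's Lemma \ref{Lemma: n* to infty}, which you should cite to justify ``for $n$ large''). The one imprecision is in your lower bound, where you apply the large-deviation lower estimate at the data-dependent threshold $\ka'=\thre(\alpha,\beta)$, which the pointwise assumptions do not directly justify; but your own case split ($\psi_0(\ka')\leq\psi_0(\ka)$ versus not, i.e.\ $\ka'\leq\ka$ versus $\ka'>\ka$ by strict monotonicity) repairs this immediately, since for instance $\ka'\leq\ka$ gives $\alpha\geq\Pro_0(T_n>\ka')\geq\Pro_0(T_n>\ka)\geq e^{-n(\psi_0(\ka)+\epsilon)}$ using only the limit at the fixed $\ka$, which also renders the detour through confining $\ka'$ to a compact subinterval unnecessary.
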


\begin{proof}
Appendix  \ref{app:A}. \\
\end{proof}

\myremark In the iid setup of Subsection \ref{subsec: iid_setup},  $C$  is  the well-known  \emph{Chernoff information} (see, e.g.,  \cite[Corollary 3.4.6]{Dembo_Zeitouni_LDPBook}).\\

We present  the following asymptotic lower bounds separately when  $T =\bar{\Lambda}$ and when $T \neq \bar{\Lambda}$, as in the latter case we also need  assumption \eqref{extra assumption}.

\begin{theorem} 
\label{thm: Universal ALB}
\begin{itemize}
\item[(i)]    If  $T= \bar\Lambda$, then
        \begin{equation} \label{Universal ALB, LLR}
            \n(\alpha,\beta) \gtrsim \max\left\{ \frac{|\log\beta|}{I_0},\, \frac{|\log\alpha|}{I_1} \right\} \quad \text{as} \quad \alpha \wedge \beta \to 0.
        \end{equation}
\item[(ii)]    If  $T\neq\bar\Lambda$ and \eqref{extra assumption} holds, then  
        \begin{equation} \label{Universal ALB, non_LLR}
            \n(\alpha,\beta) \gtrsim \max\left\{ \frac{|\log\beta|}{\psi_1(J_0)},\, \frac{|\log\alpha|}{\psi_0(J_1)} \right\} \quad \text{as} \quad \alpha \wedge \beta \to 0.
        \end{equation}
        \end{itemize}
\end{theorem}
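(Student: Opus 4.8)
The plan is to prove the two lower bounds inside the maximum separately and then combine them, using the elementary fact that for positive sequences $x_n\gtrsim a_n$ and $x_n\gtrsim b_n$ imply $x_n\gtrsim\max\{a_n,b_n\}$ (pass to a subsequence on which the maximum is consistently one of its two arguments and invoke the corresponding hypothesis). Throughout I write $n\equiv\n(\alpha,\beta)$ and $\ka\equiv\thre(\alpha,\beta)$, and I note that $n\to\infty$ as $\alpha\wedge\beta\to 0$, since $\Pro_0$ and $\Pro_1$ are mutually absolutely continuous on each $\cF_n$, so at a fixed sample size the two error probabilities of any test based on $\Stat$ cannot both be made arbitrarily small; also, in the regime $\alpha\wedge\beta\to 0$ both $\alpha$ and $\beta$ stay bounded away from $1$.

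For part (i), with $\Stat=\bar\Lambda$, the key is a one-sided change of measure. Fix $\epsilon>0$. Using the identity $\Pro_1(A)=\Exp_0[e^{\Lambda_n}1\{A\}]$, valid for $A\in\cF_n$, and the bound $e^{\Lambda_n}=e^{n\bar\Lambda_n}\ge e^{-n(I_0+\epsilon)}$ on $\{\bar\Lambda_n\ge-(I_0+\epsilon)\}$,
\[
\beta\ \ge\ \Pro_1(\bar\Lambda_n\le\ka)\ \ge\ e^{-n(I_0+\epsilon)}\,\Pro_0\!\big(-(I_0+\epsilon)\le\bar\Lambda_n\le\ka\big),
\]
and the probability on the right is at least $\Pro_0(\bar\Lambda_n\le\ka)-\Pro_0(\bar\Lambda_n<-(I_0+\epsilon))\ge(1-\alpha)-o(1)$, since $\Pro_0(\bar\Lambda_n>\ka)\le\alpha$ by the definition of $\ka$, while $\Pro_0(\bar\Lambda_n<-(I_0+\epsilon))\to 0$ because $\bar\Lambda_n\to -I_0$ $\Pro_0$-a.s.\ by \eqref{SLLN on LLR}; as $\alpha$ is bounded away from $1$, this lower bound is itself bounded away from $0$. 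Hence $|\log\beta|\le n(I_0+\epsilon)+O(1)$, so $\n(\alpha,\beta)\gtrsim|\log\beta|/(I_0+\epsilon)$, and letting $\epsilon\downarrow 0$ gives $\n(\alpha,\beta)\gtrsim|\log\beta|/I_0$. The bound $\n(\alpha,\beta)\gtrsim|\log\alpha|/I_1$ follows symmetrically, from $\alpha\ge\Pro_0(\bar\Lambda_n>\ka)=\Exp_1[e^{-\Lambda_n}1\{\bar\Lambda_n>\ka\}]$, restricting to $\{\bar\Lambda_n\le I_1+\epsilon\}$ where $e^{-\Lambda_n}\ge e^{-n(I_1+\epsilon)}$, and using $\Pro_1(\bar\Lambda_n>\ka)\ge 1-\beta$ together with $\Pro_1(\bar\Lambda_n>I_1+\epsilon)\to 0$. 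Combining the two bounds via the maximum fact above proves (i).

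For part (ii), with $\Stat\neq\bar\Lambda$, I would first localize $\ka$: since $\Stat_n\to J_0$ $\Pro_0$-a.s.\ and $\Stat_n\to J_1$ $\Pro_1$-a.s.\ by \eqref{a.s. limits of eta}, and since $\alpha,\beta$ remain bounded away from $1$, the constraints $\Pro_0(\Stat_n>\ka)\le\alpha$ and $\Pro_1(\Stat_n\le\ka)\le\beta$ force $J_0\le\liminf\ka$ and $\limsup\ka\le J_1$ (if, say, $\ka\ge J_1+\delta_0$ along a subsequence, then $\Pro_1(\Stat_n\le\ka)\ge\Pro_1(\Stat_n\le J_1+\delta_0)\to 1$, a contradiction). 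Now fix any $\ka^\dagger>J_1$ lying in the neighborhood of $J_1$ furnished by \eqref{extra assumption}. For $\alpha\wedge\beta$ small enough, $\ka\le\ka^\dagger$, hence $\alpha\ge\Pro_0(\Stat_n>\ka)\ge\Pro_0(\Stat_n>\ka^\dagger)$; applying \eqref{LD, non-LLR_null} at $\ka^\dagger$ (legitimate there by \eqref{extra assumption}) along $n=\n(\alpha,\beta)\to\infty$ gives $|\log\alpha|\le n\,\psi_0(\ka^\dagger)(1+o(1))$, i.e.\ $\n(\alpha,\beta)\gtrsim|\log\alpha|/\psi_0(\ka^\dagger)$. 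Letting $\ka^\dagger\downarrow J_1$ and using that $\psi_0$ is continuous at $J_1$ (being convex and, by \eqref{extra assumption}, finite on a neighborhood of $J_1$) yields $\n(\alpha,\beta)\gtrsim|\log\alpha|/\psi_0(J_1)$. The bound $\n(\alpha,\beta)\gtrsim|\log\beta|/\psi_1(J_0)$ is obtained the same way, from $\beta\ge\Pro_1(\Stat_n\le\ka)\ge\Pro_1(\Stat_n\le\ka^\ddagger)$ for a fixed $\ka^\ddagger<J_0$ in the neighborhood of $J_0$ from \eqref{extra assumption}, from \eqref{LD, non-LLR_alternative} at $\ka^\ddagger$, and from $\ka^\ddagger\uparrow J_0$; then combine via the maximum.

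The one genuinely delicate point is in part (ii). The large-deviation rates \eqref{LD, non-LLR_null}--\eqref{LD, non-LLR_alternative} are postulated only on the \emph{open} interval $(J_0,J_1)$, whereas the sharp bound is expressed through the boundary values $\psi_0(J_1)$ and $\psi_1(J_0)$, and --- since $\thre(\alpha,\beta)$ may itself drift to $J_1$ (or $J_0$) --- it cannot simply be frozen at an interior threshold: a naive argument would only deliver $\n(\alpha,\beta)\gtrsim|\log\alpha|/\psi_0(J_1-\epsilon)$ for every $\epsilon>0$, and the order of the $\liminf$ and of $\epsilon\downarrow0$ cannot be exchanged. Assumption \eqref{extra assumption} is precisely the device that removes this obstruction: it extends the exponential-rate estimates slightly past the boundary, so the threshold can be dominated by a fixed point just outside $(J_0,J_1)$ at which the estimate is still valid, after which continuity of the convex rate functions produces the boundary values. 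In part (i) the exact change-of-measure identity does this job automatically, which is why no analogue of \eqref{extra assumption} is needed there; the remaining points --- that $n\to\infty$ and that $\thre(\alpha,\beta)$ is suitably localized --- are routine.
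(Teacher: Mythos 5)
Your proposal is correct and follows essentially the same route as the paper: part (i) is the standard change-of-measure argument combined with the strong law \eqref{SLLN on LLR}, and part (ii) localizes $\thre(\alpha,\beta)$ inside $[J_0,J_1]$ (the paper's Lemma \ref{Lemma: n* to infty}) and then uses \eqref{extra assumption} to evaluate the large-deviation rate at a fixed point just outside $(J_0,J_1)$ before passing to the boundary by continuity of the convex rate functions. The only cosmetic difference is in part (i): the paper dispatches the regime where both $\alpha,\beta\to 0$ by citing the universal lower bound \eqref{optimal_rate} and runs the change of measure only when one error probability stays fixed, whereas you run the same change-of-measure estimate uniformly in all regimes, which is equally valid and slightly more self-contained.
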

\begin{proof}
  Appendix \ref{app:A}. 
\end{proof}

\subsubsection{Asymptotic approximations} 
Unlike the preceding  bounds,  asymptotic approximations to $\n(\alpha,\beta)$ depend on the  relative decay rate of $\alpha$ and $\beta$.    We start with the asymptotic regime where  $\alpha,\beta\to 0$ so that 
\begin{equation} \label{r}
|\log\alpha|\sim r\;  |\log\beta|
\quad \text{for some} \quad  r \in (0,\infty),
\end{equation}
in which case the approximation is expressed in terms of the function $g$, defined in \eqref{def of g}. 

\begin{corollary} \label{coro:ARE} 
As  $\alpha,\beta\to 0$ so that \eqref{r} holds, 
\begin{align}
     \n(\alpha,\beta)  \,  & \sim \,   \frac{|\log\alpha|}{ \psi_{0}(g^{-1}(r))}  \, \sim \,      \frac{|\log \beta|}{ \psi_{1}(g^{-1}(r))}. \label{ARE0, r} 
\end{align}
When in particular,  $r=1$,
\begin{align}
    & \n(\alpha,\beta)  \sim  \frac{|\log\alpha|}{C}  \sim  \frac{|\log\beta|}{C} , \label{ARE0, r=1}
\end{align}
where $C$ is defined in \eqref{C}. 
\end{corollary}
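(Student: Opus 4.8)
The plan is to deduce everything from Theorem \ref{th:asymptotic of FSS with t} by evaluating its bounds at the single threshold that is ``balanced'' for the given ratio $r$. Concretely, set $\ka_r \equiv g^{-1}(r)$; by \eqref{range of g_inverse} this is well-defined and satisfies $\ka_r \in (J_0,J_1)$, so that $\psi_0(\ka_r)$ and $\psi_1(\ka_r)$ are strictly positive finite exponents (because $\psi_0$ is strictly increasing with $\psi_0(J_0)=0$ and $\psi_1$ strictly decreasing with $\psi_1(J_1)=0$), and by the definition \eqref{def of g} of $g$ we have $\psi_0(\ka_r)=r\,\psi_1(\ka_r)$.

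First I would instantiate \eqref{Eq. ALB and AUB} at $\ka=\ka_r$. Writing $u \equiv |\log\beta|/\psi_1(\ka_r)$ and $v \equiv |\log\alpha|/\psi_0(\ka_r)$, this gives $\min\{u,v\}\lesssim \n(\alpha,\beta)\lesssim\max\{u,v\}$ as $\alpha,\beta\to 0$. The key point is that $u\sim v$: since $|\log\alpha|\sim r|\log\beta|$ and $\psi_0(\ka_r)=r\psi_1(\ka_r)$,
\[
\frac{v}{u}=\frac{|\log\alpha|}{\psi_0(\ka_r)}\cdot\frac{\psi_1(\ka_r)}{|\log\beta|}=\frac{|\log\alpha|}{r\,|\log\beta|}\longrightarrow 1 .
\]
Hence $\min\{u,v\}\sim u\sim\max\{u,v\}$, and the $\lesssim$/$\gtrsim$ sandwich then forces $\overline\lim\,(\n/u)\le 1$ and $\underline\lim\,(\n/u)\ge 1$, i.e.\ $\n(\alpha,\beta)\sim u\sim v$; this is exactly \eqref{ARE0, r}. (The only bookkeeping here is translating the one-sided relations $\lesssim,\gtrsim$ of the notation section into the two-sided $\sim$ once the extremes coincide asymptotically.)

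For \eqref{ARE0, r=1} I would specialize to $r=1$ and identify the constant. Let $\ka_1\equiv g^{-1}(1)\in(J_0,J_1)$, so $\psi_0(\ka_1)=\psi_1(\ka_1)$. I claim this common value equals $C$. Indeed, for $\ka<\ka_1$ monotonicity gives $\psi_0(\ka)<\psi_0(\ka_1)=\psi_1(\ka_1)<\psi_1(\ka)$, so $\psi_0(\ka)\wedge\psi_1(\ka)=\psi_0(\ka)<\psi_0(\ka_1)$; symmetrically for $\ka>\ka_1$ one gets $\psi_0(\ka)\wedge\psi_1(\ka)=\psi_1(\ka)<\psi_1(\ka_1)$; and equality holds at $\ka_1$. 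Thus the supremum defining $C$ in \eqref{C} is attained at $\ka_1$ and equals $\psi_0(\ka_1)=\psi_1(\ka_1)$. Substituting $r=1$ into \eqref{ARE0, r} and replacing $\psi_0(g^{-1}(1))=\psi_1(g^{-1}(1))$ by $C$ yields \eqref{ARE0, r=1}.

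I do not anticipate a genuine obstacle: all the analytic work is already contained in Theorem \ref{th:asymptotic of FSS with t} and in the structural properties of $g,\psi_0,\psi_1$ recorded in Subsection \ref{subsec: assumptions on the test stat}. The two places needing a little care are (a) verifying $g^{-1}(r)\in(J_0,J_1)$ so that the large-deviation rates $\psi_i(\ka_r)$ are the relevant exponents — immediate from \eqref{range of g_inverse} — and (b) the elementary monotonicity argument that pins the balanced exponent $\psi_0(g^{-1}(1))=\psi_1(g^{-1}(1))$ to the Chernoff-type constant $C$.
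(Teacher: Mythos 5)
Your proposal is correct and follows essentially the same route as the paper: instantiate the two-sided bound \eqref{Eq. ALB and AUB} at $\ka=g^{-1}(r)$, use $\psi_0(g^{-1}(r))=r\,\psi_1(g^{-1}(r))$ together with \eqref{r} to collapse the min and max to a common asymptotic value, and for $r=1$ identify $C$ with $\psi_0(g^{-1}(1))=\psi_1(g^{-1}(1))$ via the monotonicity of $\psi_0$ and $\psi_1$. Your write-up merely spells out the sandwich and the attainment of the supremum in \eqref{C} in more detail than the paper does.
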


\begin{proof}
Appendix \ref{app:A}.\\
\end{proof}

\myremark   
From the previous corollary and  the optimal asymptotic performance in \eqref{optimal_rate} we  obtain the asymptotic relative efficiency  of the fixed-sample-size test   as  $\alpha,\beta\to 0$ so that \eqref{r} holds. Specifically, 
\begin{align}
     \n(\alpha,\beta)  \,  &  \sim \,  \frac{I_1}{ \psi_{0}(g^{-1}(r))} \, \cL_1(\alpha,\beta) 
\sim      \frac{I_0}{ \psi_{1}(g^{-1}(r))} \, \cL_0(\alpha,\beta), \label{ARE1, r}
\end{align}
and when in particular $r=1$,
\begin{align}
    & \n(\alpha,\beta) \sim\frac{I_1}{C} \, \cL_1(\alpha,\beta) \sim \frac{I_0}{C} \, \cL_0(\alpha,\beta). \label{ARE1, r=1}  
\end{align}
\vspace{0.2cm}

When  $\alpha \wedge \beta\to 0$ so that $|\log\alpha|/|\log\beta|$ either goes to zero or diverges,  the  asymptotic lower bounds in  Theorem \ref{thm: Universal ALB}   turn out to be sharp.

\begin{corollary} \label{coro: ARE when r=0 or infty, LLR}
Let $T=\bar\Lambda$. 
\begin{enumerate}
\item [(i)] 
If  \, $\alpha \wedge \beta\to 0$ so that  $|\log \alpha| <<  |\log\beta|$,  then  \,   $ \n(\alpha,\beta) \sim |\log \beta|/I_0$.
\item[(ii)] If \, $\alpha \wedge \beta\to 0$  so that  $|\log\alpha| >> |\log\beta|$,  then \,   $\n(\alpha,\beta) \sim  |\log \alpha|/ I_1.$
\end{enumerate}
\end{corollary}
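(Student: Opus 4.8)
The plan is to sandwich $\n(\alpha,\beta)$ between matching first-order bounds: the lower bound comes from Theorem~\ref{thm: Universal ALB}(i), and the upper bound from Theorem~\ref{th:asymptotic of FSS with t} after optimizing over the free threshold $\ka$ there. I treat part~(i) in detail; part~(ii) follows by the symmetric argument, with the roles of $(\alpha,I_1,\psi_0,J_1)$ and $(\beta,I_0,\psi_1,J_0)$ interchanged.

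\emph{Lower bound for (i).} Since $|\log\alpha| << |\log\beta|$, for all sufficiently small $\alpha\wedge\beta$ one has $|\log\alpha|/I_1 \le |\log\beta|/I_0$, so the right-hand side of \eqref{Universal ALB, LLR} equals $|\log\beta|/I_0$, and Theorem~\ref{thm: Universal ALB}(i) gives $\n(\alpha,\beta) \gtrsim |\log\beta|/I_0$.

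\emph{Upper bound for (i).} Fix any $\ka \in (J_0,J_1) = (-I_0,I_1)$. By Theorem~\ref{th:asymptotic of FSS with t},
\[
   \n(\alpha,\beta) \ \lesssim\ \max\left\{ \frac{|\log\beta|}{\psi_1(\ka)},\ \frac{|\log\alpha|}{\psi_0(\ka)} \right\}.
\]
Since $\ka > J_0$ and $\psi_0$ is strictly increasing on $[J_0,J_1]$ with $\psi_0(J_0)=0$, we have $\psi_0(\ka)>0$; dividing through by $|\log\beta|$ and using $|\log\alpha| = o(|\log\beta|)$, the second entry of the maximum becomes negligible, so $\limsup \n(\alpha,\beta)/|\log\beta| \le 1/\psi_1(\ka)$. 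As this holds for every $\ka\in(J_0,J_1)$ and $\psi_1$ is strictly decreasing on $[J_0,J_1]$, letting $\ka \downarrow J_0$ yields $\limsup \n(\alpha,\beta)/|\log\beta| \le 1/\psi_1(J_0+)$.

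\emph{The key step} is then the identification $\psi_1(J_0+) = I_0$ (and, for part~(ii), $\psi_0(J_1-) = I_1$), which is the only substantive point and the only place where the hypothesis $T=\bar\Lambda$ enters in an essential way: the exponential change of measure $\frac{d\Pro_1}{d\Pro_0}(\cF_n) = e^{n\bar\Lambda_n}$ ties the two rate functions together through $\psi_1(\ka) = \psi_0(\ka) - \ka$ on $(J_0,J_1)$, so that, using $\psi_0(J_0)=0$ and $J_0 = -I_0$, one gets $\psi_1(J_0+) = I_0$ on letting $\ka \downarrow J_0$ (equivalently, this is the Chernoff--Stein lemma: $|\log\alpha| << |\log\beta|$ is exactly the regime in which the type-I constraint is asymptotically inactive, and the type-II error of the optimal $n$-sample test then decays at rate $I_0$; if such a statement about the LLR rate functions has already been recorded, it may simply be quoted). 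Plugging $\psi_1(J_0+)=I_0$ into the upper bound and combining with the lower bound gives $\n(\alpha,\beta) \sim |\log\beta|/I_0$, which is (i); (ii) follows identically. The one point demanding care is the order of the two limits: for fixed $\ka$ one first lets $\alpha,\beta\to 0$ and only afterwards $\ka \downarrow J_0$, and this cannot be reversed because $\psi_0(\ka)\to 0$ as $\ka \downarrow J_0$ would make $|\log\alpha|/\psi_0(\ka)$ blow up; the assumption $|\log\alpha| << |\log\beta|$ is precisely what keeps the $\psi_0$-term negligible for each fixed $\ka$ and legitimizes the iterated limit.
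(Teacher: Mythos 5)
Your skeleton is exactly the paper's: the lower bound is read off Theorem~\ref{thm: Universal ALB}(i), the upper bound comes from Theorem~\ref{th:asymptotic of FSS with t} by noting that the $|\log\alpha|/\psi_0(\ka)$ term is negligible when $|\log\alpha|<<|\log\beta|$, and one then lets $\ka\downarrow J_0$ using the continuity of $\psi_1$ on $[J_0,J_1]$ (which, as you implicitly use, follows from convexity plus lower semicontinuity). Your remark about the order of the two limits is also correct and worth making.

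The gap is in your ``key step.'' You justify $\psi_1(J_0+)=I_0$ by invoking the identity $\psi_1(\ka)=\psi_0(\ka)-\ka$ on $(J_0,J_1)$, but that identity is not among the assumptions of Section~\ref{sec:asy} under which the corollary is stated; it is only established in the paper (as \eqref{LLR, phis_i}) under the stronger G\"artner--Ellis conditions of Section~\ref{sec:sufficient}, so quoting it would shrink the scope of the statement. Worse, the half of the identity that does follow readily from the change of measure under the Section~\ref{sec:asy} assumptions is the useless half: writing $\Pro_1(\bar\Lambda_n\leq\ka)\geq e^{n(\ka-\epsilon)}\,\Pro_0(\ka-\epsilon<\bar\Lambda_n\leq\ka)$ and using \eqref{LD, non-LLR_null} yields only $\psi_1(\ka)\leq\psi_0(\ka)-\ka$, hence $\psi_1(J_0+)\leq I_0$ --- the wrong direction, since your upper bound $\limsup \n(\alpha,\beta)/|\log\beta|\leq 1/\psi_1(J_0+)$ needs $\psi_1(J_0+)\geq I_0$ to close the sandwich. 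The missing inequality requires a separate one-line argument, which is what the paper supplies: for $\ka<0$,
\begin{equation*}
\Pro_1(\bar\Lambda_n\leq\ka)=\Exp_0\left[e^{\Lambda_n}\,1\{\bar\Lambda_n\leq\ka\}\right]\leq e^{n\ka},
\end{equation*}
so that \eqref{LD, non-LLR_alternative} gives $\psi_1(\ka)\geq-\ka$ for $\ka\in(-I_0,0)$, and continuity of $\psi_1$ at $J_0=-I_0$ yields $\psi_1(-I_0)\geq I_0$. With that inequality inserted in place of your asserted identity (and its mirror image $\Pro_0(\bar\Lambda_n>\ka)\leq e^{-n\ka}$ giving $\psi_0(I_1)\geq I_1$ for part~(ii)), your proof is complete and coincides with the paper's.
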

\begin{proof}
Appendix \ref{app:A}
\end{proof}

\begin{corollary} \label{coro: ARE when r=0 or infty, non-LLR}
Let  $T\neq\bar\Lambda$ and assume that \eqref{extra assumption} holds. 
\begin{enumerate}
\item [(i)] If \,  $\alpha \wedge \beta\to 0$ so that  $|\log\alpha|<<|\log\beta|$,  then   \,   $ \n(\alpha,\beta) \sim |\log \beta|/\psi_1(J_0)$.
\item[(ii)]  If  \, $\alpha \wedge \beta\to 0$ so that  $|\log\alpha|>>|\log\beta|$,  then  \, $\n(\alpha,\beta) \sim  |\log \alpha|/ \psi_0(J_1)$.
\end{enumerate}
\end{corollary}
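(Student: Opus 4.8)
The plan is to sandwich $\n(\alpha,\beta)$ between the asymptotic upper bound of Theorem~\ref{th:asymptotic of FSS with t}, which requires only the standing assumptions of Subsection~\ref{subsec: assumptions on the test stat}, and the asymptotic lower bound of Theorem~\ref{thm: Universal ALB}.(ii), which is where assumption~\eqref{extra assumption} is used, and then to pass to the limit in the free threshold $\ka$. By symmetry it suffices to prove~(i); part~(ii) follows verbatim after interchanging the roles of $(\alpha,\psi_0,J_1)$ and $(\beta,\psi_1,J_0)$ and letting $\ka$ tend to $J_1$ instead of $J_0$.

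Assume then that $\alpha\wedge\beta\to 0$ with $|\log\alpha|<<|\log\beta|$. Since $\psi_0$ is strictly increasing and $\psi_1$ strictly decreasing on $[J_0,J_1]$ with $\psi_0(J_0)=\psi_1(J_1)=0$, and $[J_0,J_1]$ lies in the effective domains of both functions, the numbers $\psi_1(J_0)$ and $\psi_0(J_1)$ are positive and finite; consequently $|\log\alpha|/\psi_0(J_1)<<|\log\beta|/\psi_1(J_0)$, so the right-hand side of~\eqref{Universal ALB, non_LLR} is $\sim |\log\beta|/\psi_1(J_0)$ and Theorem~\ref{thm: Universal ALB}.(ii) gives $\n(\alpha,\beta)\gtrsim |\log\beta|/\psi_1(J_0)$. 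For the matching upper bound, fix $\ka\in(J_0,J_1)$; then $\psi_0(\ka)>0$, hence $|\log\alpha|/\psi_0(\ka)<<|\log\beta|/\psi_1(\ka)$, so the right-hand side of~\eqref{Eq. ALB and AUB} is $\sim |\log\beta|/\psi_1(\ka)$ and Theorem~\ref{th:asymptotic of FSS with t} yields $\n(\alpha,\beta)\lesssim |\log\beta|/\psi_1(\ka)$. This holds for every $\ka\in(J_0,J_1)$, and since $\psi_1$ is convex, lower-semicontinuous and finite on $[J_0,J_1]$ it is right-continuous at $J_0$, so $\sup_{\ka\in(J_0,J_1)}\psi_1(\ka)=\psi_1(J_0)$; letting $\ka\downarrow J_0$ we obtain $\n(\alpha,\beta)\lesssim |\log\beta|/\psi_1(J_0)$. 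Combining the two bounds proves~(i).

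There is no genuine obstacle: the proof is a short combination of Theorems~\ref{th:asymptotic of FSS with t} and~\ref{thm: Universal ALB}. The only point to watch is the passage $\ka\downarrow J_0$ (resp.\ $\ka\uparrow J_1$) in the upper bound, which uses that the strictly monotone convex lower-semicontinuous function $\psi_1$ (resp.\ $\psi_0$) attains its finite endpoint value $\psi_1(J_0)$ (resp.\ $\psi_0(J_1)$) continuously from within $(J_0,J_1)$; this is what allows the generic upper bound of Theorem~\ref{th:asymptotic of FSS with t}, valid only for $\ka$ strictly inside $(J_0,J_1)$, to reach the endpoint rate dictated by the lower bound of Theorem~\ref{thm: Universal ALB}.(ii). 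Assumption~\eqref{extra assumption} itself is needed only indirectly, through Theorem~\ref{thm: Universal ALB}.(ii), which requires the large-deviation rate to persist just past the endpoints $J_0$ and $J_1$.
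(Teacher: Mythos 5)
Your proposal is correct and follows essentially the same route as the paper: the lower bound is taken directly from Theorem \ref{thm: Universal ALB}.(ii), the upper bound comes from Theorem \ref{th:asymptotic of FSS with t} applied at an interior $\ka$, and the endpoint value is reached by the continuity of $\psi_1$ at $J_0$ (resp.\ $\psi_0$ at $J_1$), which the paper likewise deduces from convexity and lower-semicontinuity on the effective domain. Your observation that \eqref{extra assumption} enters only through the lower bound also matches the paper, which proves the upper bound for arbitrary $T$ without that assumption.
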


\begin{proof}
Appendix \ref{app:A}.\\
\end{proof}

\myremark   When  $T=\bar{\Lambda}$ and one of $\alpha$ and  $\beta$ is fixed,  Corollary \ref{coro: ARE when r=0 or infty, LLR}  is known as  \textit{Stein's lemma} (see, e.g., \cite[Lemma 3.4.7]{Dembo_Zeitouni_LDPBook}). We stress,  however, that both $\alpha$ and  $\beta$ may  go to 0  in the  previous corollaries. \\

When both $\alpha$ and  $\beta$ go  0, 
  Corollary \ref{coro: ARE when r=0 or infty, LLR}, in conjunction with  \eqref{optimal_rate}, implies that the  fixed-sample-size test   is asymptotically optimal under one of the two hypotheses, while being of larger order of magnitude compared to the optimal under the other hypothesis.  This is formalized in the following corollary.

   \begin{corollary} 
Let $T=\bar{\Lambda}$.
\begin{enumerate}
\item [(i)] 
If $\alpha,\beta\to 0$ so that  $|\log \alpha| <<  |\log\beta|$,  then 
\begin{equation*} 
    \cL_1(\alpha,\beta) << \n(\alpha,\beta)  \sim \cL_0(\alpha,\beta).
\end{equation*}
\item[(ii)] If  $\alpha,\beta\to 0$  so that  $|\log\alpha| >> |\log\beta|$,  then
\begin{equation*} 
    \cL_0(\alpha,\beta) << \n(\alpha,\beta) \sim \cL_1(\alpha,\beta).
\end{equation*}
\end{enumerate}
\end{corollary}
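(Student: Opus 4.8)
The plan is to derive this final corollary purely as a bookkeeping consequence of Corollary~\ref{coro: ARE when r=0 or infty, LLR} together with the optimal asymptotic rates recorded in \eqref{optimal_rate}. There is essentially nothing new to prove; the only task is to compare the three quantities $\cL_0(\alpha,\beta)$, $\cL_1(\alpha,\beta)$ and $\n(\alpha,\beta)$ under the two asymmetric regimes.

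First I would treat case (i), where $\alpha,\beta\to 0$ with $|\log\alpha|<<|\log\beta|$. By \eqref{optimal_rate} we have $\cL_0(\alpha,\beta)\sim |\log\beta|/I_0$ and $\cL_1(\alpha,\beta)\sim|\log\alpha|/I_1$. By Corollary~\ref{coro: ARE when r=0 or infty, LLR}(i) we have $\n(\alpha,\beta)\sim|\log\beta|/I_0$, hence immediately $\n(\alpha,\beta)\sim\cL_0(\alpha,\beta)$. For the other comparison, note that $\cL_1(\alpha,\beta)/\n(\alpha,\beta)\sim (|\log\alpha|/I_1)\big/(|\log\beta|/I_0) = (I_0/I_1)\,(|\log\alpha|/|\log\beta|)\to 0$ since $|\log\alpha|/|\log\beta|\to 0$ and $I_0,I_1$ are fixed positive constants; this is exactly $\cL_1(\alpha,\beta)<<\n(\alpha,\beta)$ in the notation of the paper. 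Combining the two gives $\cL_1(\alpha,\beta)<<\n(\alpha,\beta)\sim\cL_0(\alpha,\beta)$.

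Case (ii) is entirely symmetric: now $|\log\alpha|>>|\log\beta|$, so by Corollary~\ref{coro: ARE when r=0 or infty, LLR}(ii) we have $\n(\alpha,\beta)\sim|\log\alpha|/I_1\sim\cL_1(\alpha,\beta)$, while $\cL_0(\alpha,\beta)/\n(\alpha,\beta)\sim (I_1/I_0)\,(|\log\beta|/|\log\alpha|)\to 0$, giving $\cL_0(\alpha,\beta)<<\n(\alpha,\beta)\sim\cL_1(\alpha,\beta)$.

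I do not anticipate any real obstacle here; the statement is a corollary in the literal sense. The only point requiring a modicum of care is the manipulation of the asymptotic relations --- specifically, that one may multiply/divide the $\sim$ equivalences from \eqref{optimal_rate} and Corollary~\ref{coro: ARE when r=0 or infty, LLR} and conclude a limit of the ratio, which is legitimate because all quantities involved are positive and the constants $I_0,I_1$ do not depend on $\alpha,\beta$. One should also note that the regime ``$\alpha,\beta\to 0$'' assumed in the corollary is a special case of the ``$\alpha\wedge\beta\to 0$'' regime under which Corollary~\ref{coro: ARE when r=0 or infty, LLR} is stated, so that corollary does apply verbatim.
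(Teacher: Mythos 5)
Your proposal is correct and matches the paper's own (implicit) argument: the paper states this corollary as a direct consequence of Corollary \ref{coro: ARE when r=0 or infty, LLR} combined with \eqref{optimal_rate}, which is exactly the bookkeeping you carry out. The ratio computations and the observation that $\alpha,\beta\to 0$ is a special case of $\alpha\wedge\beta\to 0$ are all that is needed, and you supply them.
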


We end this subsection with the corresponding result when 
 $T \neq \bar{\Lambda}$.
 
   \begin{corollary} 
 Let   $T \neq \bar{\Lambda}$ for which \eqref{extra assumption} holds.
\begin{enumerate}
\item [(i)] If $\alpha,\beta\to 0$ so that  $|\log\alpha|<<|\log\beta|$,  then 
\begin{equation*}
    \cL_1(\alpha,\beta) << \n(\alpha,\beta)   \sim  
    \frac{I_0}{\psi_1(J_0)} \cL_0(\alpha,\beta).
\end{equation*}
\item[(ii)]  If $\alpha,\beta\to 0$ so that  $|\log\alpha|>>|\log\beta|$,  then
\begin{equation*}
    \cL_0(\alpha,\beta) << \n(\alpha,\beta) \sim  \frac{I_1}{\psi_0(J_1)} \cL_1(\alpha,\beta).
\end{equation*}
\end{enumerate}
\end{corollary}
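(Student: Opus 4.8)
The plan is to deduce this corollary directly from the sharp asymptotics for $\n(\alpha,\beta)$ already recorded in Corollary \ref{coro: ARE when r=0 or infty, non-LLR}, together with the optimal asymptotic performance in \eqref{optimal_rate}; no new estimate is needed, so this is essentially a bookkeeping argument. The only point requiring care is that Corollary \ref{coro: ARE when r=0 or infty, non-LLR} is stated under assumption \eqref{extra assumption}, but this is exactly the standing hypothesis of the present corollary, so it applies verbatim.

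For part (i), I would first invoke Corollary \ref{coro: ARE when r=0 or infty, non-LLR}(i) to obtain $\n(\alpha,\beta) \sim |\log\beta|/\psi_1(J_0)$ as $\alpha,\beta \to 0$ with $|\log\alpha| << |\log\beta|$. Combining this with $\cL_0(\alpha,\beta) \sim |\log\beta|/I_0$ from \eqref{optimal_rate} and taking the ratio yields the asymptotic equivalence $\n(\alpha,\beta) \sim (I_0/\psi_1(J_0))\,\cL_0(\alpha,\beta)$. For the strict domination $\cL_1(\alpha,\beta) << \n(\alpha,\beta)$, I would use $\cL_1(\alpha,\beta) \sim |\log\alpha|/I_1$ from \eqref{optimal_rate} and form
\[
\frac{\cL_1(\alpha,\beta)}{\n(\alpha,\beta)} \sim \frac{I_0}{I_1\,\psi_1(J_0)} \cdot \frac{|\log\alpha|}{|\log\beta|} \longrightarrow 0,
\]
since $|\log\alpha| << |\log\beta|$ by hypothesis.

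Part (ii) is entirely symmetric: invoking Corollary \ref{coro: ARE when r=0 or infty, non-LLR}(ii) gives $\n(\alpha,\beta) \sim |\log\alpha|/\psi_0(J_1)$, which combined with $\cL_1(\alpha,\beta) \sim |\log\alpha|/I_1$ yields $\n(\alpha,\beta) \sim (I_1/\psi_0(J_1))\,\cL_1(\alpha,\beta)$, while $\cL_0(\alpha,\beta)/\n(\alpha,\beta) \sim (I_1/(I_0\,\psi_0(J_1))) \cdot (|\log\beta|/|\log\alpha|) \to 0$. Since every ingredient is a previously established limit, I do not anticipate any genuine obstacle here; the proof is a short assembly of Corollary \ref{coro: ARE when r=0 or infty, non-LLR} and \eqref{optimal_rate}, exactly parallel to the $T=\bar\Lambda$ case stated just above.
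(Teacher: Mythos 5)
Your proposal is correct and follows exactly the route the paper intends: these corollaries are stated as immediate consequences of Corollary \ref{coro: ARE when r=0 or infty, non-LLR} combined with the optimal rates in \eqref{optimal_rate}, which is precisely your assembly. The only blemish is that the constants in your two displayed ratios are off (e.g., $\cL_1(\alpha,\beta)/\n(\alpha,\beta) \sim (\psi_1(J_0)/I_1)\cdot|\log\alpha|/|\log\beta|$, not $(I_0/(I_1\psi_1(J_0)))\cdot|\log\alpha|/|\log\beta|$), but since the constant is a fixed positive finite number in either case, the conclusion that the ratio tends to $0$ is unaffected.
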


\subsection{Asymptotic analysis for  multistage tests} \label{sec: robustness}
We now focus on the multistage tests we introduced in Section \ref{sec:multistage} and establish the main theoretical results of this work. We assume that  the test parameters are selected according to  Theorems \ref{th:3ST_error_control} and \ref{th:4ST_error_control}. However, unless otherwise specified, we do not require that the free parameters are selected as in Section \ref{subsec:3ST_free} and  
\ref{subsec:4ST_free}.

\subsubsection{An upper bound on the maximum sample size} \label{sec: upper bound in the middle}
By the definitions of the multistage tests  and the selection of their parameters according to  Theorems \ref{th:3ST_error_control} and \ref{th:4ST_error_control} it follows that, for any $\alpha, \beta \in (0,1)$ and any choice of the   free parameters,     
\begin{equation*} \label{pathwise_upper}
 \TST, \; \FST,\; \FFST \leq \n(\alpha/3,\beta/3),
\end{equation*}
 and consequently,   in view of  Theorem \ref{th:asymptotic of FSS with t}, 
\begin{equation} \label{non-robustness_SPRT}
\TST, \; \FST,\; \FFST \lesssim   \frac{
|\log (\alpha \wedge \beta)|}{C} \quad \text{as} \quad \alpha \wedge \beta\to 0.
\end{equation} 
On  the other hand, it is well known (see, e.g., \cite{Bechhofer60}) that, even when $X$ is an iid sequence, the  SPRT, defined in \eqref{Definition of SPRT}, not only does not have  bounded sample size, but even  its    \textit{expected} sample size can be much larger than  $\n(\alpha,\beta)$. 

To be specific,  consider a $\Pro \in \cP$, different from $\Pro_0$ and $\Pro_1$, under which  $\Lambda$  is a random walk   whose increments have  \textit{zero mean} and finite variance $\sigma^2$. The  expected sample size of the SPRT, with   $A=|\log\beta|$ and $B=|\log\alpha|$, under such a $\Pro$  is
\begin{equation} \label{Inflated ESS of SPRT in the middle}
    \Exp[\SPRT] \approx |\log \alpha | |\log \beta|/ \sigma^2,
\end{equation}
where  $\approx$ is  an equality when there is no  overshoot over the boundaries  (see, e.g.,  \cite[Chapter 3.1.1.2]{Tartakovsky_Book}).  Comparing  with  the upper bound in \eqref{non-robustness_SPRT}  suggests that  all  proposed multistage tests  will perform much better than the SPRT \textit{under such a $\Pro$  when  $\alpha$ and $\beta$  are small enough}. 
This robustness  of the proposed multistage tests is  illustrated in Figure \ref{Figure: performance against parameter}.

\subsubsection{Asymptotic analysis under $\Pro_0$ and $\Pro_1$} \label{sec:asy_multistage}

By the optimal asymptotic performance in \eqref{optimal_rate} it follows that, 
as $\alpha, \beta \to 0$,   
    \begin{align*}
        \Exp_1[\TST], \; \Exp_1[\FST], \; \Exp_1[\FFST] \; \gtrsim \; \frac{|\log\alpha|}{I_1} \quad \text{ and } \quad 
        \Exp_0[\TST], \; \Exp_0[\FST], \; \Exp_0[\FFST] \; \gtrsim \; \frac{|\log\beta|}{I_0},
    \end{align*}
for any selection of the free parameters and any choice of the test-statistic, $T$.  In the next lemma we obtain a sharper asymptotic lower bound when $T$ is not $\bar{\Lambda}$, but satisfies condition \eqref{extra assumption}.

\begin{lemma} \label{lem: ALB, multi}
    Suppose that $T\neq\bar\Lambda$ and \eqref{extra assumption} holds. Then,   for any  selection of the free parameters, as $\alpha, \beta \to 0$,  
    \begin{align*}
        \Exp_1[\TST], \; \Exp_1[\FST], \; \Exp_1[\FFST] \; \gtrsim \; \frac{|\log\alpha|}{\psi_0(J_1)} 
        \quad \text{ and } \quad 
        \Exp_0[\TST], \; \Exp_0[\FST], \; \Exp_0[\FFST] \; \gtrsim \; \frac{|\log\beta|}{\psi_1(J_0)}.
    \end{align*}
\end{lemma}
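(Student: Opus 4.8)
The strategy is to reduce the lower bound on each multistage test's expected sample size to the analogous lower bound on $\n(\alpha,\beta)$, which is exactly the content of Theorem~\ref{thm: Universal ALB}(ii). Fix the test $\TST$ (the argument for $\FST$ and $\FFST$ is identical since all three are built from the same fixed-sample-size statistic at stage sizes of the form $\n(\cdot,\cdot)$). First I would observe that the parameters of $\TST$ are chosen according to Theorem~\ref{th:3ST_error_control}, so the final stage size is $N=\n(\alpha/2,\beta/2)$ and the decision at stage $3$ is made from $T_N$; in particular, $\three\in\cC(\alpha,\beta)$. The key point is that $\TST$ is itself a test in $\cC(\alpha,\beta)$ that uses at most $N$ observations, so $\Exp_1[\TST]\le N = \n(\alpha/2,\beta/2)$ trivially — but that is the wrong direction. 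For the lower bound, the natural route is a change-of-measure / data-processing argument: any test achieving type-I error $\le\alpha$ and type-II error $\le\beta$ must, on average under $\Pro_1$, collect enough observations that $T_n$ can separate the hypotheses, and the minimal such $n$ is governed by the exponential rate $\psi_0(J_1)$ appearing in Theorem~\ref{thm: Universal ALB}(ii).

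Concretely, the cleanest argument is the following. For a test $\chi=(\st,\dr)\in\cC(\alpha,\beta)$, condition on the event $\{\dr=1\}$, which has $\Pro_1$-probability at least $1-\beta$. On this event we would like to argue that $\st$ must be at least of order $|\log\alpha|/\psi_0(J_1)$ with high $\Pro_1$-probability, because otherwise the test rejects $H_0$ too readily under $\Pro_0$. Here I would use the almost-sure limit in \eqref{a.s. limits of eta}: under $\Pro_1$, $T_n\to J_1$, and under $\Pro_0$, $T_n\to J_0<J_1$. For a fixed $\ka$ slightly below $J_1$, the multistage test can only stop and reject at a stage $n$ where $T_n>\ka$ (for $n$ large), and by \eqref{extra assumption} the probability of $\{T_n>\ka\}$ under $\Pro_0$ decays like $e^{-n\psi_0(\ka)}$ with $\psi_0$ finite in a neighbourhood of $J_1$. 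Summing the $\Pro_0$-rejection probability over the (at most three) deterministic stages and requiring it to stay below $\alpha$ forces the largest stage at which a rejection can occur — hence a positive fraction of the $\Pro_1$-mass of $\st$ — to be at least $(1-o(1))|\log\alpha|/\psi_0(\ka)$. Letting $\ka\uparrow J_1$ and using lower-semicontinuity/monotonicity of $\psi_0$ on $[J_0,J_1]$ yields the constant $\psi_0(J_1)$.

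In fact, since the multistage test's parameters are themselves optimal-rate choices $\n(\alpha/c,\beta/c)$ with $c\in\{2,3\}$, the simplest execution is to invoke Theorem~\ref{thm: Universal ALB}(ii) directly at the scaled error levels. The smallest stage of $\three$ is $n_0\wedge n_1$, and under $\Pro_1$ the test cannot accept $H_0$ at the $n_0$-stage with probability exceeding $\beta/2$ (by the error-control construction), so with $\Pro_1$-probability $\ge 1-\beta/2$ the test does not stop before stage $n_1=\n(\alpha/2,\delta)$; hence $\Exp_1[\TST]\ge (1-\beta/2)\, n_1 \ge (1-\beta/2)\,\n(\alpha/2,\delta)$. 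Since $\delta\in(\beta/2,1)$ is fixed (or bounded away from $0$) and $\alpha\to0$, we have $\alpha\wedge(\text{second argument})\to0$, and Theorem~\ref{thm: Universal ALB}(ii) gives $\n(\alpha/2,\delta)\gtrsim |\log(\alpha/2)|/\psi_0(J_1)\sim |\log\alpha|/\psi_0(J_1)$, which is the claimed bound. The bound under $\Pro_0$ is symmetric, using that the test does not reject $H_0$ before stage $n_0=\n(\gamma,\beta/2)$ with $\Pro_0$-probability $\ge 1-\alpha/2$, together with the $\psi_1(J_0)$ half of Theorem~\ref{thm: Universal ALB}(ii). The identical reasoning applies to $\FST$ and $\FFST$ since they too have a stage of size $\n(\cdot,\delta)$ (resp.\ $\n(\gamma,\cdot)$) reached with probability $1-o(1)$ under the relevant measure.

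The main obstacle is justifying the ``cannot stop before the $n_1$-stage with high probability'' claim cleanly: one must verify from the explicit stage structure of $\TST$ (and of $\FST$, $\FFST$, which have an extra intermediate stage) that the only way to stop before reaching the stage of size $\n(\alpha/2,\delta)$ under $\Pro_1$ is to accept $H_0$ at an early accept-opportunity, whose $\Pro_1$-probability is controlled by the fixed-sample-size error guarantee built into the parameter choice. This is a finite bookkeeping check over the at-most-four branches of each test, but it must be done carefully for each of the three tests and for both hypotheses; once it is in place, the asymptotics are immediate from Theorem~\ref{thm: Universal ALB}(ii) together with the observation that the ``inner'' error levels $\gamma,\delta$ (or $\gamma',\delta'$) are bounded away from $0$ so that the relevant regime $\alpha\wedge\beta\to0$ is in force.
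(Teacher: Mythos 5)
Your reduction to Theorem~\ref{thm: Universal ALB}(ii) via the first-stage term of the non-asymptotic lower bound is the right starting point and is indeed how the paper begins, but there is a genuine gap: the lemma is asserted \emph{for any selection of the free parameters}, whereas your execution assumes $\delta$ (resp.\ $\gamma$) is ``fixed (or bounded away from $0$)''. Two problems here. First, the endpoint that matters is $1$, not $0$: if $\delta\to 0$ then $n_1=\n(\alpha/2,\delta)$ only gets larger and your bound survives, but if $\delta\to 1$ the type-II constraint in \eqref{n*(alpha,beta)} becomes vacuous and $n_1$ can stay bounded --- in the Gaussian example \eqref{gaussian_n_star}, $\n(\alpha/2,\delta)=(z_{\alpha/2}+z_\delta)^2/(2I)$ remains $O(1)$ if $z_\delta\to-\infty$ at the rate of $-z_{\alpha/2}$ --- so $\Exp_1[\TST]\geq(1-\beta/2)\,n_1$ is then worthless. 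Second, nothing in the statement or in the designs of Theorems~\ref{th:3ST_error_control} and \ref{th:4ST_error_control} excludes this regime: the free parameters are allowed to drift to an endpoint as $\alpha,\beta\to 0$ (the paper's own recommended choices do drift), so the claim cannot be established from the early-stage term alone.

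The paper closes this hole with a dichotomy that your argument is missing. Fix $\epsilon\in(0,1)$. If the free parameter is $\leq 1-\epsilon$, then by monotonicity of $\n$ the early stage already has length at least $\n(1-\epsilon,\beta/2)$ (resp.\ $\n(\alpha/2,1-\epsilon)$), to which Theorem~\ref{thm: Universal ALB}(ii) applies with one argument held fixed. If instead it exceeds $1-\epsilon$, one must use the \emph{second} term of the non-asymptotic lower bounds \eqref{3ST, ESS bound, null, after plugin}, \eqref{3ST, ESS bound, alternative, after plugin}, \eqref{4ST, ESS bound, after plugin, null}, \eqref{44ST, ESS bound, after plugin, alternative}: the test then reaches the final stage with probability at least $1-\epsilon-o(1)$, and that stage has length $N=\n(\alpha/2,\beta/2)\gtrsim|\log\beta|/\psi_1(J_0)$ (or $|\log\alpha|/\psi_0(J_1)$). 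Letting $\epsilon\downarrow 0$ recovers the stated constants; for $\four$ and $\ffour$ the same idea requires three cases because of the extra free parameter $\gamma'$ (resp.\ $\delta'$), costing a factor $1-2\epsilon$ before the limit. Your closing paragraph identifies the ``bookkeeping over branches'' as the main obstacle, but that part is already packaged in the displayed bounds; the genuinely missing idea is this case split on the location of the free parameter.
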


\begin{proof}
Appendix \ref{app:new}.
\end{proof}

We next state the main results of this section, according to which  the previous asymptotic lower bounds  are attained with an appropriate selection of the free parameters. 
To avoid repetition, we state these results  only when  $|\log\alpha|\gtrsim |\log\beta|$, as  analogous results  hold when $|\log\alpha|\lesssim |\log\beta|$.

\begin{theorem} \label{theorem, main}
Suppose that   $T=\bar\Lambda$ and let the free  parameters be selected according to   \eqref{free_3ST}, \eqref{free_4ST_hat}, \eqref{free_4ST_check}. 
\begin{enumerate}
    \item[(i)] If  $\alpha,\beta\to 0$ so that
$|\log\alpha| \gtrsim  |\log \beta|   $,
then 
    \begin{equation*}
        \Exp_1[\TST] \sim \Exp_1[\FST] \sim  \Exp_1[\FFST] \sim \frac{|\log\alpha|}{I_1} \sim \cL_1(\alpha,\beta).  
    \end{equation*}
    \item[(ii)]  If also   $
        |\log\alpha| \lesssim |\log\beta|/\beta^r$ for some  $r>0$, then 
    $$\Exp_0[\FST] \sim \frac{|\log\beta|}{I_0} \sim \cL_0(\alpha,\beta).$$
    \item[(iii)] If  also $
        |\log\alpha| \lesssim |\log\beta|^r$ for some $r\geq 1$, then 
    $$\Exp_0[\TST]\sim \Exp_0[\FFST] \sim \frac{|\log\beta|}{I_0} \sim \cL_0(\alpha,\beta).$$
\end{enumerate}
\end{theorem}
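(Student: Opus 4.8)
The plan is to combine three ingredients: the asymptotic approximations for $\n(\alpha,\beta)$ from Corollaries \ref{coro:ARE}--\ref{coro: ARE when r=0 or infty, LLR}, the explicit upper and lower bounds on the expected sample sizes from \eqref{3ST, ESS bound, null, after plugin}--\eqref{44ST, ESS bound, after plugin, alternative}, and the specific choice of free parameters in \eqref{free_3ST}, \eqref{free_4ST_hat}, \eqref{free_4ST_check}. Since $T=\bar\Lambda$, we have $\psi_1(J_0)=I_0$, $\psi_0(J_1)=I_1$, and the lower bounds in all three parts follow already from \eqref{optimal_rate}; so the real content is the matching \emph{upper} bounds. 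Throughout I would work in the regime $|\log\alpha|\gtrsim|\log\beta|$, equivalently $|\log\alpha|/|\log\beta|$ is bounded below, so that $r\ge 1$ or $r=\infty$ in the language of \eqref{r}.

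\textbf{Part (i): expected sample size under $\Pro_1$.} For the $3$-stage test, start from the upper bound in \eqref{3ST, ESS bound, alternative, after plugin}, $\Exp_1[\TST]\le n_1+(N-n_1)\delta$ with $n_1=\n(\alpha/2,\delta)$, $N=\n(\alpha/2,\beta/2)$, and $\delta=\tdelta$ the minimizer of this bound. To get an upper bound on the minimum, it suffices to exhibit \emph{one} admissible $\delta$ for which $n_1+(N-n_1)\delta \lesssim |\log\alpha|/I_1$. The natural choice is to let $\delta\to 0$ slowly, e.g. $\delta=\delta_\alpha$ with $|\log\delta_\alpha|<<|\log\alpha|$ but $|\log\delta_\alpha|\to\infty$; then $|\log(\alpha/2)|>>|\log\delta|$ forces (by Corollary \ref{coro: ARE when r=0 or infty, LLR}(ii), applied with the roles chosen so that the ``$\alpha$-side'' dominates) $n_1=\n(\alpha/2,\delta)\sim |\log(\alpha/2)|/I_1 \sim |\log\alpha|/I_1$, while $N\lesssim |\log(\alpha\wedge\beta)|/C = |\log\beta|/C \lesssim |\log\alpha|/C$ by Theorem \ref{th:asymptotic of FSS with t}, so $(N-n_1)\delta_\alpha$ is of order $\delta_\alpha|\log\alpha|=o(|\log\alpha|)$. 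Hence $\Exp_1[\TST]\lesssim|\log\alpha|/I_1$, and with the lower bound from \eqref{optimal_rate} we get $\Exp_1[\TST]\sim|\log\alpha|/I_1\sim\cL_1(\alpha,\beta)$. The argument for $\FFST$ is identical, using \eqref{44ST, ESS bound, after plugin, alternative} with $\delta,\deltap\to 0$ slowly; for $\FST$, the bound \eqref{4ST, ESS bound, after plugin, alternative} has the same shape as the $3$-stage one, so the same slowly-vanishing $\delta$ works.

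\textbf{Parts (ii)--(iii): expected sample size under $\Pro_0$.} Now use \eqref{4ST, ESS bound, after plugin, null} for $\FST$: $\Exp_0[\FST]\le n_0+(N_0-n_0)\gamma+(N-N_0)\gammap$ with $n_0=\n(\gamma,\beta/3)$, $N_0=\n(\gammap,\beta/3)$, $N=\n(\alpha/2,\beta/3)$, minimized over $\alpha/2<\gammap<\gamma<1$. Again it suffices to pick admissible values. Choose $\gamma=\gamma_\beta\to 0$ slowly with $|\log\gamma_\beta|<<|\log\beta|$, so $n_0=\n(\gamma_\beta,\beta/3)\sim|\log\beta|/I_0$ by Corollary \ref{coro: ARE when r=0 or infty, LLR}(i). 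Then $(N_0-n_0)\gamma_\beta$: we need $N_0\gamma_\beta = o(|\log\beta|)$. Here the constraint $|\log\alpha|\lesssim|\log\beta|/\beta^r$ enters — it guarantees $|\log\alpha|$ is at most polynomial in $1/\beta$ times $|\log\beta|$, hence $\log|\log\alpha| = O(|\log\beta|)$, which lets us choose $\gamma_\beta$ decaying like a small negative power of $|\log\beta|$ so that $\gamma_\beta\,\n(\gammap,\beta/3)\to 0$ even though $\n(\gammap,\beta/3)$ may itself grow like $|\log\alpha|/C$. Finally choose $\gammap=\gammap_\beta$ with $\alpha/2<\gammap_\beta$ and $\gammap_\beta\to 0$ fast enough that $(N-N_0)\gammap_\beta = o(|\log\beta|)$; since $N\le \n(\alpha/2,\beta/3)\lesssim|\log\alpha|/I_1$ and $|\log\alpha|\lesssim|\log\beta|/\beta^r$, taking $\gammap_\beta$ a suitable power of $\beta$ (which is compatible with $\gammap_\beta>\alpha/2$ precisely when $|\log\alpha|\lesssim|\log\beta|/\beta^r$) makes this term negligible. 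This yields $\Exp_0[\FST]\lesssim|\log\beta|/I_0$, hence (ii). For $\TST$ and $\FFST$ the relevant bounds \eqref{3ST, ESS bound, null, after plugin} and \eqref{44ST, ESS bound, after plugin, null} have only \emph{one} free parameter $\gamma$ multiplying $N-n_0$ where $N=\n(\alpha/2,\beta/2)$ or $\n(\alpha/3,\beta/2)$; there is no intermediate stage to shrink the $\gamma N$ term, so we need $\gamma_\beta \n(\alpha,\beta)\to 0$. Since $\n(\alpha,\beta)\lesssim|\log\alpha|/C$ and we can only afford $|\log\gamma_\beta|<<|\log\beta|$, this requires $\log|\log\alpha| = o(|\log\beta|)$, i.e. $|\log\alpha|\lesssim \exp(o(|\log\beta|))$, which is implied by $|\log\alpha|\lesssim|\log\beta|^r$; then $n_0\sim|\log\beta|/I_0$ gives (iii).

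\textbf{Main obstacle.} The delicate point is the bookkeeping in (ii)--(iii): one must simultaneously make $\gamma$ (or $\gamma,\gammap$) go to $0$ \emph{fast enough} that the products $\gamma(N_0-n_0)$, $\gammap(N-N_0)$, $\gamma(N-n_0)$ are $o(|\log\beta|)$, yet \emph{slowly enough} that $|\log\gamma|<<|\log\beta|$ so that $n_0=\n(\gamma,\beta/\cdot)\sim|\log\beta|/I_0$ via Stein's lemma (Corollary \ref{coro: ARE when r=0 or infty, LLR}(i)), and one must check $\gammap>\alpha/2$ remains satisfiable — and it is exactly here that the hypotheses $|\log\alpha|\lesssim|\log\beta|/\beta^r$ (for $\FST$, which has the extra stage to absorb the growth of $\n(\alpha,\beta)$) versus the more restrictive $|\log\alpha|\lesssim|\log\beta|^r$ (for $\TST,\FFST$, which do not) come from. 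Making these two competing requirements compatible is the heart of the argument; once the right rates for the free parameters are identified, each term is estimated by a direct application of the corollaries of Section \ref{subsec: FSS} together with Theorem \ref{th:asymptotic of FSS with t}, and matched against the universal lower bound \eqref{optimal_rate}. The details are carried out in Appendix \ref{app:new}.
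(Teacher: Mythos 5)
Your proposal follows essentially the same route as the paper's proof (which is carried out for Theorem \ref{theorem, main, non-LLR} and transfers verbatim since $\psi_1(J_0)=I_0$ and $\psi_0(J_1)=I_1$ when $T=\bar\Lambda$): lower bounds from \eqref{optimal_rate}, upper bounds by exhibiting one admissible choice of the free parameters in the non-asymptotic bounds and invoking the minimizer property of \eqref{free_3ST}, \eqref{free_4ST_hat}, \eqref{free_4ST_check}, with $\n(\cdot,\cdot)$ controlled by Corollary \ref{coro: ARE when r=0 or infty, LLR} and Theorem \ref{th:asymptotic of FSS with t}. Your choices of rates ($\delta\to0$ with $|\log\delta|<<|\log\alpha|$ in (i); $\gamma$ polylogarithmically small and $\gamma'$ a power of $\beta$ capped below at $\alpha$ in (ii); $\gamma=|\log\beta|^{-r+\epsilon}\vee\alpha$ in (iii)) match the paper's.

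One step is justified incorrectly, though your construction rescues it. In part (ii) you argue that the middle term $(N_0-n_0)\gamma$ is controlled because ``$\log|\log\alpha|=O(|\log\beta|)$ lets us choose $\gamma_\beta$ decaying like a small negative power of $|\log\beta|$ so that $\gamma_\beta\,\n(\gammap,\beta/3)\to0$ even though $\n(\gammap,\beta/3)$ may itself grow like $|\log\alpha|/C$.'' This does not work: if $N_0\asymp|\log\alpha|/C$ with $|\log\alpha|$ as large as $|\log\beta|/\beta^r$, then a merely polylogarithmic $\gamma$ gives $\gamma N_0\asymp|\log\beta|^{1-\epsilon}/\beta^r$, which is far from $o(|\log\beta|)$. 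The correct point --- and the reason the intermediate stage helps at all --- is that with $\gammap=(\beta^{r+\epsilon'}\wedge\gamma)\vee\alpha$ one has $N_0=\n(\gammap,\beta/3)\lesssim|\log(\gammap\wedge\beta)|/C\lesssim(r+\epsilon')|\log\beta|/C$ by Theorem \ref{th:asymptotic of FSS with t}, so $N_0$ never grows like $|\log\alpha|/C$, and the middle term is $o(|\log\beta|)$ from $\gamma\to0$ alone; the hypothesis $|\log\alpha|\lesssim|\log\beta|/\beta^r$ is needed only for the last term $(N-N_0)\gammap\lesssim\gammap\,|\log\alpha|/C$. (Also a small slip in (i): $|\log(\alpha\wedge\beta)|$ equals $\max\{|\log\alpha|,|\log\beta|\}\lesssim|\log\alpha|$, not $|\log\beta|$; your conclusion there is unaffected.)
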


\begin{proof} 
Appendix \ref{app:new}.
\end{proof}

\begin{theorem} \label{theorem, main, non-LLR}
Suppose that $T\neq\bar\Lambda$,  \eqref{extra assumption} holds,  and let the free parameters  be selected according to   \eqref{free_3ST}, \eqref{free_4ST_hat}, \eqref{free_4ST_check}. 
\begin{enumerate}
 \item[(i)]   If   $\alpha,\beta\to 0$ so that 
$|\log\alpha| \gtrsim  |\log \beta|$, then
    $$ \Exp_1[\TST] \sim \Exp_1[\FST] \sim  \Exp_1[\FFST]  \sim  \frac{|\log\alpha|}{\psi_0(J_1)} \sim \frac{I_1}{\psi_0(J_1)}\cL_1(\alpha,\beta).$$
    \item[(ii)]  If  also 
$        |\log\alpha| \lesssim |\log\beta|/\beta^r$ for some $r>0$, then  
    \begin{equation*}
        \Exp_0[\FST]  \sim  \frac{|\log\beta|}{\psi_1(J_0)}  \sim \frac{I_0}{\psi_1(J_0)}\cL_0(\alpha,\beta).
    \end{equation*}
    \item[(iii)]  If  also 
$ |\log\alpha| \lesssim |\log\beta|^r $ for some $r \geq 1$, then 
    \begin{equation*} 
        \Exp_0[\TST] \sim \Exp_0[\FFST] \sim  \frac{|\log\beta|}{\psi_1(J_0)} \sim \frac{I_0}{\psi_1(J_0)}\cL_0(\alpha,\beta).
    \end{equation*}
 \end{enumerate}
\end{theorem}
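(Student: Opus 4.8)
The structure will parallel that of the proof of Theorem~\ref{theorem, main}, with $I_1$ and $I_0$ there replaced by $\psi_0(J_1)$ and $\psi_1(J_0)$. The plan is to obtain the lower bounds for free from Lemma~\ref{lem: ALB, multi} and to match them with upper bounds obtained by substituting convenient, possibly $(\alpha,\beta)$-dependent, values of the free parameters into the explicit expected-sample-size bounds of Section~\ref{sec:multistage}; this is legitimate precisely because the free parameters are chosen to minimise those upper bounds. Indeed, Lemma~\ref{lem: ALB, multi} already yields, as $\alpha,\beta\to 0$, that $\Exp_1[\TST],\Exp_1[\FST],\Exp_1[\FFST]\gtrsim|\log\alpha|/\psi_0(J_1)$ and $\Exp_0[\TST],\Exp_0[\FST],\Exp_0[\FFST]\gtrsim|\log\beta|/\psi_1(J_0)$, while \eqref{optimal_rate} identifies $|\log\alpha|/\psi_0(J_1)$ with $(I_1/\psi_0(J_1))\,\cL_1(\alpha,\beta)(1+o(1))$ and $|\log\beta|/\psi_1(J_0)$ with $(I_0/\psi_1(J_0))\,\cL_0(\alpha,\beta)(1+o(1))$; so it suffices to prove $\Exp_1[\cdot]\lesssim|\log\alpha|/\psi_0(J_1)$ in~(i) and $\Exp_0[\cdot]\lesssim|\log\beta|/\psi_1(J_0)$ in~(ii)--(iii). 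The recurring asymptotic inputs are: Corollary~\ref{coro: ARE when r=0 or infty, non-LLR}(ii), giving $\n(\alpha,\delta_0)\sim|\log\alpha|/\psi_0(J_1)$ for any fixed $\delta_0\in(0,1)$; Corollary~\ref{coro: ARE when r=0 or infty, non-LLR}(i), giving $\n(\gamma,\beta)\sim|\log\beta|/\psi_1(J_0)$ whenever $|\log\gamma|=o(|\log\beta|)$; the bound~\eqref{C} together with the fact that $|\log(\alpha\wedge\beta)|\sim|\log\alpha|$ under the standing hypothesis $|\log\alpha|\gtrsim|\log\beta|$, giving $\n\lesssim|\log\alpha|/C$ for the final-stage size; and the monotonicity of $\n$ in each argument combined with Corollary~\ref{coro:ARE}.

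For~(i) I would substitute a fixed $\delta=\delta_0\in(0,1)$ into the upper bounds in~\eqref{3ST, ESS bound, alternative, after plugin} and~\eqref{4ST, ESS bound, after plugin, alternative}, both equal to $n_1(1-\delta)+N\delta$ with $n_1=\n(\alpha/2,\delta)$; this gives $\Exp_1[\TST],\Exp_1[\FST]\le\big((1-\delta_0)\psi_0(J_1)^{-1}+\delta_0 C^{-1}\big)|\log\alpha|(1+o(1))$, and since the left-hand sides do not depend on $\delta_0$, sending $\delta_0\downarrow 0$ gives $\lesssim|\log\alpha|/\psi_0(J_1)$. For $\FFST$ I would substitute fixed $\deltap_0<\delta_0$ into~\eqref{44ST, ESS bound, after plugin, alternative}: then $\n(\alpha/3,\delta_0)\sim\n(\alpha/3,\deltap_0)\sim|\log\alpha|/\psi_0(J_1)$, so the middle term is $o(|\log\alpha|)$ and the last term is at most $\n(\alpha/3,\beta/2)\,\deltap_0\lesssim\deltap_0|\log\alpha|/C$, and $\deltap_0\downarrow 0$ again gives $\Exp_1[\FFST]\lesssim|\log\alpha|/\psi_0(J_1)$.

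For the ``polynomial'' cases---$\TST$ and $\FFST$ under $|\log\alpha|\lesssim|\log\beta|^r$ with $r\ge 1$---I would substitute $\gamma=1/|\log\alpha|$ into the upper bounds in~\eqref{3ST, ESS bound, null, after plugin} and~\eqref{44ST, ESS bound, after plugin, null}, both equal to $n_0(1-\gamma)+N\gamma$ with $n_0=\n(\gamma,\beta/2)$: here $|\log\gamma|=\log|\log\alpha|=o(|\log\beta|)$ gives $n_0\sim|\log\beta|/\psi_1(J_0)$, whereas $N\gamma\lesssim(|\log\alpha|/C)\,|\log\alpha|^{-1}=O(1)$, so $\Exp_0[\TST],\Exp_0[\FFST]\lesssim|\log\beta|/\psi_1(J_0)$. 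The ``exponential'' case---$\FST$ under $|\log\alpha|\lesssim|\log\beta|/\beta^r$ with $r>0$---is the delicate one: I would substitute a fixed $\gamma=\gamma_0\in(0,1)$ and $\gammap=1/|\log\alpha|$ into~\eqref{4ST, ESS bound, after plugin, null}, whose upper bound is $n_0(1-\gamma_0)+N_0(\gamma_0-\gammap)+N\gammap$ with $n_0=\n(\gamma_0,\beta/3)$, $N_0=\n(\gammap,\beta/3)$, $N=\n(\alpha/2,\beta/3)$. As before $n_0\sim|\log\beta|/\psi_1(J_0)$ and $N\gammap=O(1)$; the crucial point is that $|\log\alpha|\lesssim|\log\beta|/\beta^r$ forces $|\log\gammap|=\log|\log\alpha|\le r'|\log\beta|$ for every $r'>r$ and all small $\alpha,\beta$, hence $\gammap\ge\beta^{r'}$ eventually, so that monotonicity of $\n$ and Corollary~\ref{coro:ARE} (with ratio $r'$) give $N_0\le\n(\beta^{r'},\beta/3)\lesssim|\log\beta|/\psi_1(g^{-1}(r'))$; therefore $N_0(\gamma_0-\gammap)\le\gamma_0 N_0\lesssim\gamma_0|\log\beta|/\psi_1(g^{-1}(r'))$ is negligible next to $|\log\beta|/\psi_1(J_0)$ once $\gamma_0\downarrow 0$, giving $\Exp_0[\FST]\lesssim|\log\beta|/\psi_1(J_0)$. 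Combining all these upper bounds with Lemma~\ref{lem: ALB, multi} and~\eqref{optimal_rate} completes the three parts.

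I expect the main obstacle to be this choice of $\gammap$ in~(ii): fixing all free parameters is too crude once $|\log\alpha|/|\log\beta|\to\infty$, so $\gammap$ has to be driven to $0$ slowly enough that the size of the second acceptance stage, $\n(\gammap,\beta/3)$, stays of order $|\log\beta|$, yet fast enough that $\n(\alpha/2,\beta/3)\,\gammap$ is negligible; the hypothesis $|\log\alpha|\lesssim|\log\beta|/\beta^r$ is exactly what guarantees this window of admissible $\gammap$ is nonempty. This is also where the second acceptance opportunity of $\four$ earns its keep: the null sides of $\three$ and $\ffour$ offer only one acceptance opportunity, which is why they require merely polynomial asymmetry $|\log\alpha|\lesssim|\log\beta|^r$.
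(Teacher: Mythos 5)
Your proposal is correct and follows essentially the same route as the paper: lower bounds from Lemma \ref{lem: ALB, multi} combined with \eqref{optimal_rate}, and matching upper bounds obtained by substituting admissible values of the free parameters into the non-asymptotic bounds \eqref{3ST, ESS bound, null, after plugin}--\eqref{44ST, ESS bound, after plugin, alternative} (legitimate since the selected parameters minimize those bounds), then invoking Theorem \ref{th:asymptotic of FSS with t} and Corollaries \ref{coro:ARE}, \ref{coro: ARE when r=0 or infty, non-LLR}. The only difference is cosmetic: you use fixed parameters $\delta_0,\gamma_0$ followed by a second limit $\delta_0,\gamma_0\downarrow 0$, and the choices $\gamma=\gamma'=1/|\log\alpha|$, whereas the paper uses single $(\alpha,\beta)$-dependent sequences such as $\delta=|\log\alpha|^{-\epsilon}\vee\beta$ and $\gamma'=(\beta^{r+\epsilon'}\wedge\gamma)\vee\alpha$ satisfying the conditions \eqref{main thm, condition on delta}, \eqref{main thm, condition on gamma, gamma'}, \eqref{main thm, condition on gamma}; both verifications of the crucial middle-term estimate for $\four$ in part (ii) rest on the same use of $|\log\alpha|\lesssim|\log\beta|/\beta^r$.
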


\begin{proof}
Appendix \ref{app:new}.\\
\end{proof}

\noindent \underline{\textbf{Remarks}}:   1) As can be seen in the proof of Theorem  \ref{theorem, main, non-LLR}, 
 condition \eqref{extra assumption} is used only in Lemma \ref{lem: ALB, multi}, i.e., it is only needed for establishing the asymptotic lower bounds but not for obtaining the matching upper bounds. \\

\noindent  2) As can be seen from their proofs,  the above theorems hold  even if the free parameters  of the multistage tests are not  selected as suggested in Subsections \ref{subsec:3ST_free} and \ref{subsec:4ST_free}. 
Indeed, part (i) of each theorem  holds as long  as  $\delta \to 0$ and  $|\log\delta|<<|\log\alpha|$ as $\alpha \to 0$.
Similarly, part (ii) (resp. (iii)) of each theorem  holds  as long as the specification of   $\gamma,\gammap$ (resp. $\gamma$)  is such that  \eqref{main thm, condition on gamma, gamma'} (resp. \eqref{main thm, condition on gamma}) is satisfied. \\

\noindent 3) Part (i) in Theorems  \ref{theorem, main}  and \ref{theorem, main, non-LLR}  states that,  under the  \emph{alternative} hypothesis, 
  all multistage tests in this work achieve the optimal performance to a first-order asymptotic approximation when  $T=\bar\Lambda$, and have the same asymptotic relative efficiency when  $T\neq\bar\Lambda$ and \eqref{extra assumption} holds, as  $\alpha,\beta\to 0$ so that $|\log\alpha|\gtrsim |\log\beta|$. 
  On the other hand,  parts (ii) and (iii) 
imply that the corresponding results under the \emph{null} hypothesis hold \textit{as long as $\alpha$ does not go to 0 much faster than $\beta$},  and that this constraint is  much stricter  for $\three$ and $\ffour$ than for $\four$. This suggests that  $\four$ will perform much better than $\three$ and $\ffour$  under the \text{null} hypothesis when $\alpha$ is much smaller than   $\beta$. This insight  is  supported by Figures \ref{Figure: ratios} and \ref{Figure: performance against parameter}. \\

\noindent 3) Analogous results   hold  when  $\alpha,\beta\to 0$ so that $|\log\alpha|\lesssim |\log\beta|$. Indeed, under this asymptotic regime,  all three multistage tests  are asymptotically optimal  when  $T=\bar\Lambda$, and admit the same asymptotic relative efficiency when  $T\neq\bar\Lambda$ and \eqref{extra assumption} holds, under the \textit{null} hypothesis.  The corresponding results under the \textit{alternative} hypothesis hold as long as 
$\beta$ does not go to $0$ much faster than $\alpha$, with this requirement being much stricter for  $\three$ and $\four$ than for  $\ffour$. \\ 

\noindent 4)   The asymptotic optimality    under both hypotheses of the 3-stage test with $T=\bar{\Lambda}$ was established    in \cite[Section 2]{Lorden_1983}, in the iid setup of Subsection \ref{subsec: iid_setup}, as   $\alpha,\beta\to 0$ so that 
$$ |\log\beta|/r \lesssim|\log\alpha| \lesssim r\,|\log\beta| \;\text{ for some }\; r\geq 1. $$
Therefore, apart from extending it to a more general distributional setup, here we generalize this result   even in the iid case. Indeed, from  parts (i) and (iii) of  Theorem  \ref{theorem, main}  and the remark 3) we can conclude that  the asymptotic optimality of the 3-stage test  under both hypotheses  holds 
 as  $\alpha,\beta\to 0$ so that 
$$ |\log\beta|^{1/r} \lesssim |\log\alpha| \lesssim  |\log\beta|^r  \quad \text{ for some }\; r\geq 1.
$$
At the same time, we show how adding a stage  can further relax this asymptotic regime. Specifically,   from  Theorem  \ref{theorem, main}  and remark 3)
we can conclude  that the 4-stage test   $\four$ is asymptotically optimal  under both hypotheses as  $\alpha,\beta\to 0$ so that 
$$ |\log\beta|^{1/r}\lesssim |\log\alpha|\lesssim |\log\beta|/\beta^k, \quad \text{ for some }\; r\geq 1 \; \text{ and } \; k>0, $$
while the 4-stage test  $\ffour$ is asymptotically optimal  under both hypotheses as  $\alpha,\beta\to 0$ so that 
$$ |\log\alpha|^{1/r}\lesssim |\log\beta|\lesssim |\log\alpha|/\alpha^k, \quad \text{ for some }\; r\geq 1 \; \text{ and } \; k>0. $$ 

\noindent 5)   In view of   Theorem \ref{theorem, main, non-LLR}, in what follows  we  use the following notation for the  asymptotic relative efficiencies under $\Pro_0$ and $\Pro_1$,  as $\alpha, \beta \to 0$, of the proposed  multistage tests  when $T \neq \bar\Lambda$ and \eqref{extra assumption} holds, 
\begin{equation} \label{ARE}
\text{ARE}_0\equiv  \frac{\psi_1(J_0)}{I_0} \qquad \text{and} \qquad\text{ARE}_1 \equiv \frac{\psi_0(J_1)}{I_1},
 \end{equation}
  without further reference to  the relative decay rates of $\alpha$ and  $\beta$.

\section{Sufficient conditions} \label{sec:sufficient}
In this section we state  sufficient conditions for the existence of  functions $\psi_0, \psi_1$  that satisfy  \eqref{LD, non-LLR_null}--\eqref{LD, non-LLR_alternative}, which we also specify. To this end, we rely on the  \textit{G\"artner-Ellis theorem} from large deviation theory.  We start by stating  a  version of this  theorem  that focuses on events of form $(\ka,\infty)$ or $(-\infty,\ka)$, where $\ka\in \bR$, and  requires somewhat weaker  conditions compared to standard  formulations in the literature, such as  \cite[Theorem 2.3.6]{Dembo_Zeitouni_LDPBook} or \cite[Theorem 3.2.1]{Bucklew_Book}.

\subsection{The G\"artner-Ellis theorem}
\label{subsec: G-E}
In this subsection we consider  an arbitrary $\Pro \in \cP$ and  for every 
$ \theta\in \bR$   we set   
    $$ \phi_n(\theta)\equiv \frac{1}{n} \log \Exp\left[ \exp\{n\, \theta\,  T_n\} \right],
    \quad n \in \bN,
    $$ 
and  assume that 
    \begin{equation*}
        \phi(\theta)\equiv \lim_n \, \phi_n(\theta) \quad\text{exists in} \quad (-\infty,\infty].
    \end{equation*}
We  denote by $\Theta$ the effective domain of   $\phi$, i.e., 
$\Theta\equiv \{\theta\in \bR: \; \phi(\theta)<\infty\},$
and by    $\phis$  its Legendre-Fenchel transform:
    \begin{equation} \label{def of phis}
        \phis(\ka)\equiv \sup_{\theta\in \bR} \{\theta\ka-\phi(\theta)\}, \quad \ka\in \bR.
    \end{equation}
We further assume that 
 $\Theta^o \neq\emptyset$,   and that
    \begin{equation*} 
     \phi  \text{ is \textit{strictly} convex and continuous in } \Theta \; 
     \text{and differentiable in }  \Theta^o. 
    \end{equation*}
This assumption implies that  $\phi'$ is strictly increasing in $\Theta^o$,  that $\phi' (\Theta^o)$ is a non-trivial open interval, and as a result that 
    \begin{equation*}
        \phis(\ka)=\vartheta(\ka) \, \ka-\phi(\vartheta(\ka)) \quad \forall \; 
        \ka\in \phi'(\Theta^o),
    \end{equation*}
where  $\vartheta$ is    the inverse of  $\phi'$ in  $\Theta^o$.

Finally,  we  assume that for every   $\theta\in  \Theta$  there exists a (unique) distribution of $X$,  $\Qro_\theta$, such that 
\begin{equation} \label{exponential_tilting}
\frac{d\Qro_\theta}{d\Pro}(\cF_n) = \exp\{n \, ( \theta \,  \Stat_n-\phi_{n}(\theta) ) \}, \quad \forall \; n \in \bN.
\end{equation}  
This is known as an \textit{exponential tilting} of $\Pro$, and for  its  existence it suffices, for example,  that $\bS$ be Polish (see, e.g., \cite[p. 144, Theorem 5.1]{MeasureOnMetricSpace}).  \\

\begin{theorem} \label{th:A version of G-E}
Suppose that the above assumptions hold. 
  \begin{itemize}
        \item [(i)] If \, $\Theta^o\cap(0,\infty)\neq\emptyset$, then $\phis(\phi'(0+))=0$, $\phis$ is strictly increasing in $\phi'(\Theta^o\cap(0,\infty))$ and, for every $\ka\in \phi'(\Theta^o\cap(0,\infty))$,
        \begin{equation} \label{>ka}
            \lim_n \,\frac{1}{n}\log \Pro\left( T_n>\ka \right)=-\phis(\ka).
        \end{equation}
        \item [(ii)] If \,  $\Theta^o\cap(-\infty,0)\neq\emptyset$, then $\phis(\phi'(0-))=0$, $\phis$ is strictly decreasing in $\phi'(\Theta^o\cap(-\infty,0))$ and, for every $\ka\in \phi'(\Theta^o\cap(-\infty,0))$, 
        \begin{equation} \label{<=ka}
            \lim_n \,\frac{1}{n}\log \Pro\left( T_n\leq \ka \right)=-\phis(\ka).
        \end{equation}
\item[(iii)] For every $\theta \in \Theta^o$,\; 
$ \Qro_{\theta} \left( T_{n} \to \phi'(\theta) \right)=1.$
    
    \end{itemize}
\end{theorem}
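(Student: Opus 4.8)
The plan is to derive (i) and (ii) from a single tilting argument applied to the measures $\Qro_\theta$, and to obtain (iii) as a by-product of the same analysis together with a Borel--Cantelli step. I would treat (i) in detail; (ii) is symmetric (replace $\theta$ by $-\theta$, or equivalently $T_n$ by $-T_n$) and (iii) follows from the exponential concentration that the same estimates deliver.

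First I would record the elementary consequences of the standing assumptions on $\phi$. Since $\phi$ is finite and differentiable on the nonempty open set $\Theta^o$, strictly convex there, and convex (hence continuous) on $\Theta$, the derivative $\phi'$ is strictly increasing on $\Theta^o$, so $\phi'(\Theta^o)$ is a nonempty open interval and $\phi'(0+)$ makes sense whenever $\Theta^o\cap(0,\infty)\neq\emptyset$; moreover $\phis$ restricted to $\phi'(\Theta^o)$ equals $\vartheta(\ka)\ka-\phi(\vartheta(\ka))$ with $\vartheta=(\phi')^{-1}$, and by the usual Legendre duality $\phis$ is strictly convex and differentiable on $\phi'(\Theta^o)$ with $(\phis)'=\vartheta$. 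Evaluating at $\ka=\phi'(0+)$ gives $\vartheta(\phi'(0+))=0$, hence $\phis(\phi'(0+))=0\cdot\phi'(0+)-\phi(0)$; and $\phi(0)=\lim_n \tfrac1n\log\Exp[e^{0}]=0$, so $\phis(\phi'(0+))=0$. Since $(\phis)'=\vartheta>0$ on $\phi'(\Theta^o\cap(0,\infty))$, $\phis$ is strictly increasing there. This disposes of the non-limit claims in (i).

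For the key limit \eqref{>ka}, fix $\ka\in\phi'(\Theta^o\cap(0,\infty))$ and set $\theta\equiv\vartheta(\ka)\in\Theta^o\cap(0,\infty)$. The upper bound is Markov/Chernoff: $\Pro(T_n>\ka)\le e^{-n\theta\ka}\Exp[e^{n\theta T_n}]=e^{-n(\theta\ka-\phi_n(\theta))}$, so $\limsup_n\tfrac1n\log\Pro(T_n>\ka)\le -(\theta\ka-\phi(\theta))=-\phis(\ka)$. For the matching lower bound I would change measure to $\Qro_\theta$ via \eqref{exponential_tilting}: for any $\eta>0$,
\begin{equation*}
\Pro(T_n>\ka)\ge \Pro(\ka<T_n\le \ka+\eta)=\Exp_{\Qro_\theta}\!\left[e^{-n(\theta T_n-\phi_n(\theta))}\,1\{\ka<T_n\le\ka+\eta\}\right]\ge e^{-n(\theta(\ka+\eta)-\phi_n(\theta))}\,\Qro_\theta(\ka<T_n\le\ka+\eta).
\end{equation*}
Taking $\tfrac1n\log$ and then $\liminf_n$, and using $\phi_n(\theta)\to\phi(\theta)$, gives $\liminf_n\tfrac1n\log\Pro(T_n>\ka)\ge -(\theta(\ka+\eta)-\phi(\theta))+\liminf_n\tfrac1n\log\Qro_\theta(\ka<T_n\le\ka+\eta)$. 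Letting $\eta\downarrow0$ at the end, the proof reduces to showing that $\Qro_\theta(\ka<T_n\le\ka+\eta)\to 1$ (or at least does not decay exponentially) for each fixed small $\eta>0$; since $\phi'(\theta)=\ka$, this is exactly the assertion that under $\Qro_\theta$ the statistic $T_n$ concentrates at $\phi'(\theta)=\ka$, i.e. part (iii) with this $\theta$.

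Thus the crux is (iii): for $\theta\in\Theta^o$, $\Qro_\theta(T_n\to\phi'(\theta))=1$. I would prove this by a Chernoff-plus-Borel--Cantelli argument \emph{under} $\Qro_\theta$. The relevant cumulant generating function under $\Qro_\theta$ is, by \eqref{exponential_tilting}, $\tfrac1n\log\Exp_{\Qro_\theta}[e^{ns T_n}]=\phi_n(\theta+s)-\phi_n(\theta)\to\phi(\theta+s)-\phi(\theta)$ for $s$ small enough that $\theta+s\in\Theta^o$; differentiating in $s$ at $0$ identifies the ``mean'' as $\phi'(\theta)$. For any $\epsilon>0$ pick $s>0$ small with $\theta+s\in\Theta^o$; then $\Qro_\theta(T_n>\phi'(\theta)+\epsilon)\le e^{-n(s(\phi'(\theta)+\epsilon)-(\phi_n(\theta+s)-\phi_n(\theta)))}$, and since $s\phi'(\theta)-(\phi(\theta+s)-\phi(\theta))$ is the first-order Taylor remainder of the strictly convex function $\phi$ at $\theta$, strict convexity makes it strictly negative — wait, more precisely $s(\phi'(\theta)+\epsilon)-(\phi(\theta+s)-\phi(\theta))=s\epsilon-[\phi(\theta+s)-\phi(\theta)-s\phi'(\theta)]$, and for $s$ small the bracket is $o(s)$ while $s\epsilon$ dominates, so the exponent is $\le -cn$ for some $c>0$ and all large $n$; hence $\sum_n\Qro_\theta(T_n>\phi'(\theta)+\epsilon)<\infty$. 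The same with $-T_n$ and $\theta-s$ controls the lower tail. Borel--Cantelli then gives $\Qro_\theta(|T_n-\phi'(\theta)|>\epsilon \text{ i.o.})=0$, and intersecting over $\epsilon=1/k$ yields (iii). Feeding (iii) back into the lower-bound display completes \eqref{>ka}; part (ii) is obtained by the symmetric argument, noting $\phi'(0-)=\phi'(0)$ and $\phi(0)=0$ again give $\phis(\phi'(0-))=0$.

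The main obstacle I anticipate is making the lower-bound change-of-measure step fully rigorous at the boundary case $\ka=\phi'(\theta)$: one must ensure the tilted probability $\Qro_\theta(\ka<T_n\le\ka+\eta)$ really is bounded below away from $0$ (indeed $\to1$), which is where the concentration statement (iii) is indispensable and cannot be circumvented; organizing the argument so that (iii) is proved first, using only Chernoff bounds under $\Qro_\theta$ (which need only mild regularity of $\phi$ near $\theta$, already assumed), and then used for both (i) and (ii), is the cleanest route. A secondary nuisance is bookkeeping the one-sided limits $\phi'(0\pm)$ and checking $0\in\Theta$ (so that $\phi(0)=0$ and the $\Qro_0=\Pro$ normalization are legitimate), which follows since $\phi_n(0)=0$ for all $n$.
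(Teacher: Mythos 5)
Your overall architecture (Chernoff upper bound; tilting plus concentration for the lower bound; concentration under $\Qro_\theta$ via Chernoff and Borel--Cantelli) is the same as the paper's, but the lower-bound step contains a genuine flaw in the choice of tilting parameter. You fix $\ka$ and tilt with $\theta=\vartheta(\ka)$, so that under $\Qro_\theta$ the statistic $T_n$ converges a.s.\ to $\phi'(\theta)=\ka$, and you then assert that this concentration yields $\Qro_\theta(\ka<T_n\le\ka+\eta)\to 1$ (or at least subexponential decay). It does not: $\ka$ is a \emph{boundary} point of the event $\{\ka<T_n\le\ka+\eta\}$, and a.s.\ convergence $T_n\to\ka$ is perfectly compatible with $\Qro_\theta(T_n>\ka)=0$ for every $n$ (e.g.\ $T_n$ approaching $\ka$ from below), in which case your lower bound is vacuous. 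This is exactly the obstacle you flag in your closing paragraph, but part (iii) applied with $\theta=\vartheta(\ka)$ does not resolve it. The paper's proof avoids the problem by tilting to an \emph{interior} point of the target window: it takes $\theta=\vartheta(\ka+\epsilon/2)$, so that $T_n\to\ka+\epsilon/2\in(\ka,\ka+\epsilon)$ under $\Qro_\theta$ and hence $\Qro_\theta(\ka<T_n\le\ka+\epsilon)\to1$; the resulting exponent $-\vartheta(\ka+\epsilon/2)(\ka+\epsilon)+\phi(\vartheta(\ka+\epsilon/2))$ then converges to $-\phis(\ka)$ as $\epsilon\downarrow0$ by continuity of $\vartheta$ and $\phi$. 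With this one-line correction your argument goes through.

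A secondary point: your evaluation $\phis(\phi'(0+))=\vartheta(\phi'(0+))\,\phi'(0+)-\phi(0)$ implicitly assumes $\vartheta$ is defined at $\phi'(0+)$, i.e.\ that $0\in\Theta^o$. The theorem is stated, and used in the paper, in situations where $0$ may lie on the boundary of $\Theta$ (e.g.\ $\Theta_0=[0,1]$ in the likelihood-ratio case), in which case $\phi'(0+)$ is an endpoint of the open interval $\phi'(\Theta^o)$ and your formula does not literally apply. The paper instead sandwiches $0\le\phis(\phi'(0+))\le\lim_{\theta\downarrow0}\{\theta\phi'(\theta)-\phi(\theta)\}=0$, using nonnegativity and lower semicontinuity of $\phis$; the same caveat applies to your closing claim that $\phi'(0-)=\phi'(0)$, which presumes two-sided differentiability at $0$. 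Your proof of (iii) itself is sound and matches the paper's.
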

\begin{proof}
 Appendix \ref{app: proof from Assumptions B to Assumptions A}. \\
\end{proof}

\myremark 1) Theorem \ref{th:A version of G-E}  implies that, for any $\epsilon>0$,
 $\Pro\left(T_n-\phi'(0+)>\epsilon\right)$ decays exponentially fast in $n$  if  $\Theta^o$ intersects $(0, \infty)$, and  $\Pro(T_n- \phi'(0-) \leq -\epsilon)$ decays exponentially fast  in $n$  if $\Theta^o$ intersects $(-\infty,0)$.  \\

\noindent 2) In standard formulations of the  G\"artner-Ellis theorem, such as   \cite[Theorem 2.3.6]{Dembo_Zeitouni_LDPBook} or \cite[Theorem 3.2.1]{Bucklew_Book},
it is additionally assumed that $ 0\in \Theta^o$, 
in which case the conditions in both (i) and (ii) of Theorem  \ref{th:A version of G-E}  hold, 
$\phi'(0)$ exists, and thus
 $\Pro(|T_n-\phi'(0)|>\epsilon)$ decays exponentially fast in $n$ for any $\epsilon>0$, and  $\Pro(T_n \to \phi'(0))=1$.
It is also  assumed that  $\phi$ is  \emph{steep}, i.e.,  $\phi'(\Theta^o)=\bR$,  (see, e.g., \cite[Definition 2.3.5]{Dembo_Zeitouni_LDPBook}), in which case $$\phi'(\Theta^o\cap(0,\infty))=(\phi'(0),\infty) \qquad \text{and} \qquad \phi'(\Theta^o\cap(-\infty,0))=(-\infty,\phi'(0)).$$

\subsection{Sufficient conditions for the asymptotic theory of Section \ref{sec:asy} }
We next apply Theorem \ref{th:A version of G-E} to establish sufficient conditions for the asymptotic theory of Section \ref{sec:asy}. To this end,  when the assumptions of Subsection \ref{subsec: G-E}  hold for $\Pro=\Pro_i$, where  $i \in \{0,1\}$,   we write  $\phi_{i, n}, \phi_i, \Theta_i, \phis_i, \vartheta_i$  instead of 
$\phi_n,  \phi, \Theta, \phis, \vartheta$ and, 
for each $\theta \in \Theta_i^o$,   we denote by  $\Qro_{i,\theta}$ the exponential tilting of $\Pro_i$, i.e.,   
\begin{equation} \label{Lambda_n(Q_theta,P_0)}
  \frac{d\Qro_{i,\theta}}{d\Pro_0}(\cF_n)   = \exp\left\{ n \, ( \theta \,  \Stat_n-\phi_{i,n}(\theta) ) \right\}, \quad \forall \; n \in \bN. 
\end{equation}

\begin{corollary} \label{sufficient condition, non-LLR}
Suppose \eqref{a.s. limits of eta} holds 
for some $J_0, J_1 \in \bR$, with  $J_0<J_1$. 
\begin{enumerate}
\item[(i)] If  the assumptions of Subsection \ref{subsec: G-E}  hold for  $\Pro=\Pro_0$ and 
\begin{align} \label{cond1}
\begin{split} 
\Theta_0^o\cap (0,\infty)\neq \emptyset,   \quad \phi'_0(0+)=J_0, \\
\text{and} \quad \exists\; \theta_0\in \Theta_0\cap (0,\infty): \;
\phi'_0(\theta_0-)=J_1,
 \end{split}
\end{align}
then  \eqref{LD, non-LLR_null}   holds, with $\psi_0=\phis_0$,  for every $\ka \in (J_0, J_1)$. 
If also    $ \theta_0\in \Theta_0^o$, then \eqref{LD, non-LLR_null}   holds,   with $\psi_0=\phis_0$,  in a neighborhood of $J_1$.  \\

\item[(ii)]  If the assumptions of Subsection \ref{subsec: G-E}  hold for  $\Pro=\Pro_1$ and 
\begin{align} \label{cond2}
\begin{split}
\Theta_1^o\cap (-\infty,0)\neq \emptyset, \quad \phi'_1(0-)=J_1, \\
\text{and} \quad  \exists\; \theta_1\in \Theta_1\cap (-\infty,0): \;
 \phi'_1(\theta_1+)=J_0,
 \end{split}
\end{align}
then  \eqref{LD, non-LLR_alternative}  holds, with  $\psi_1=\phis_1$,  for every $\ka \in (J_0, J_1)$.    If also  $\theta_1\in \Theta_1^o$,
then   \eqref{LD, non-LLR_alternative}  holds,
  with  $\psi_1=\phis_1$,  in a neighborhood of   $J_0$.  
 
\item[(iii)] 
For $i\in \{0,1\}$, if the assumptions of Subsection \ref{subsec: G-E}  hold for $\Pro=\Pro_i$, then
$$ \Qro_{i,\theta} \left( T_{n} \to \phi'_i(\theta) \right)=1 \quad \forall \;  \theta\in \Theta_i^o.
$$ 
\end{enumerate}
\end{corollary}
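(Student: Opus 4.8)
\emph{Plan of proof.} The plan is to read off all three parts of the corollary from Theorem~\ref{th:A version of G-E}, applied once with $\Pro=\Pro_0$ and once with $\Pro=\Pro_1$. Part~(iii) is immediate: under the stated hypothesis the assumptions of Subsection~\ref{subsec: G-E} hold for $\Pro=\Pro_i$, so Theorem~\ref{th:A version of G-E}(iii) applies directly and gives $\Qro_{i,\theta}(T_n\to\phi'_i(\theta))=1$ for every $\theta\in\Theta_i^o$. Parts~(i) and~(ii) are mirror images, corresponding to the ``$\theta>0$'' and ``$\theta<0$'' branches of the theorem; I would carry out~(i) in detail and indicate the sign reversals needed for~(ii).

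For~(i): the first part of \eqref{cond1} gives $\Theta_0^o\cap(0,\infty)\neq\emptyset$, so Theorem~\ref{th:A version of G-E}(i) applies to $\Pro_0$ and yields $\phis_0(\phi_0'(0+))=0$, strict monotonicity of $\phis_0$ on $\phi_0'(\Theta_0^o\cap(0,\infty))$, and the limit \eqref{>ka} — which is exactly \eqref{LD, non-LLR_null} with $\psi_0=\phis_0$ — for every $\ka$ in that interval. It then remains to check that $(J_0,J_1)$ lies in this interval. Since $\phi_0(0)=0$ we always have $0\in\Theta_0$, so, $\Theta_0$ being an interval, $\Theta_0^o=(\underline\theta_0,\overline\theta_0)$ with $\underline\theta_0\le 0\le\overline\theta_0$, whence $\Theta_0^o\cap(0,\infty)=(0,\overline\theta_0)$ with $\overline\theta_0\equiv\sup\Theta_0>0$; and because $\phi_0$ is strictly convex and differentiable on $\Theta_0^o$, $\phi_0'$ is continuous and strictly increasing there, so $\phi_0'(\Theta_0^o\cap(0,\infty))=(\phi_0'(0+),\phi_0'(\overline\theta_0-))=(J_0,\phi_0'(\overline\theta_0-))$ by \eqref{cond1}. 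The key step is the inequality $J_1\le\phi_0'(\overline\theta_0-)$: the point $\theta_0$ in \eqref{cond1} lies in $\Theta_0\cap(0,\infty)$, hence $\theta_0\le\overline\theta_0$, and monotonicity of $\phi_0'$ gives $J_1=\phi_0'(\theta_0-)\le\phi_0'(\overline\theta_0-)$; combined with $J_0<J_1$ this yields $(J_0,J_1)\subseteq(J_0,\phi_0'(\overline\theta_0-))$, and \eqref{LD, non-LLR_null} follows for every $\ka\in(J_0,J_1)$. If in addition $\theta_0\in\Theta_0^o$, then $\theta_0<\overline\theta_0$ and strict monotonicity sharpens this to $J_1=\phi_0'(\theta_0)<\phi_0'(\overline\theta_0-)$, so $J_1$ is interior to $\phi_0'(\Theta_0^o\cap(0,\infty))$ and \eqref{LD, non-LLR_null} persists on a neighborhood of $J_1$.

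I would also record that $\psi_0=\phis_0$ is a genuine rate function of the type required in Subsection~\ref{subsec: assumptions on the test stat}: as a Legendre--Fenchel transform it is convex and lower-semicontinuous, and $\phi_0(0)=0$ forces $\phis_0\ge 0$; $\phis_0(J_0)=0$ and strict monotonicity on $(J_0,J_1)$ come from Theorem~\ref{th:A version of G-E}(i), and since $\phis_0$ is convex, attains its minimum value $0$ at $J_0$, and is strictly increasing on $(J_0,J_1)$, it is strictly increasing on the closed interval $[J_0,J_1]$. Part~(ii) is obtained by the symmetric argument: replace $(0,\infty)$ by $(-\infty,0)$, $\phi_0'(0+)$ by $\phi_1'(0-)$, ``strictly increasing'' by ``strictly decreasing'', and use Theorem~\ref{th:A version of G-E}(ii), \eqref{<=ka} and \eqref{cond2} in place of Theorem~\ref{th:A version of G-E}(i), \eqref{>ka} and \eqref{cond1}, respectively.

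The one point that needs genuine care — and the main obstacle — is the boundary case $\theta_0\in\Theta_0\setminus\Theta_0^o$, i.e. $\theta_0=\overline\theta_0$, in which $J_1=\phi_0'(\overline\theta_0-)$ is the right endpoint, rather than an interior point, of $\phi_0'(\Theta_0^o\cap(0,\infty))$; then the ``neighborhood of $J_1$'' conclusion fails, and one must still verify $\phis_0(J_1)<\infty$ so that $[J_0,J_1]$ lies in the effective domain of $\psi_0$. Here I would argue that $\phi_0'(\theta)<J_1$ for all $\theta\in\Theta_0^o$, so $\theta\mapsto\theta J_1-\phi_0(\theta)$ is strictly increasing on $\Theta_0^o$ and non-decreasing on $\Theta_0$; hence its supremum, which by \eqref{def of phis} equals $\phis_0(J_1)$, is attained as $\theta\uparrow\overline\theta_0=\theta_0$ and is $\theta_0 J_1-\phi_0(\theta_0)<\infty$, by continuity of $\phi_0$ on $\Theta_0$ and $\phi_0(\theta_0)<\infty$. (When $\theta_0\in\Theta_0^o$ the same value $\phis_0(J_1)=\theta_0 J_1-\phi_0(\theta_0)$ is immediate from the formula $\phis_0(\ka)=\vartheta_0(\ka)\,\ka-\phi_0(\vartheta_0(\ka))$, valid for $\ka\in\phi_0'(\Theta_0^o)$, stated just before Theorem~\ref{th:A version of G-E}.)
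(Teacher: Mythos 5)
Your proof is correct and follows the same route as the paper: both parts reduce to Theorem~\ref{th:A version of G-E} once the inclusions $(J_0,J_1)\subseteq\phi_0'(\Theta_0^o\cap(0,\infty))$ (and $(J_0,J_1]$ when $\theta_0\in\Theta_0^o$) are checked, which the paper asserts in one line and you verify explicitly via the monotonicity of $\phi_0'$ on $(0,\overline\theta_0)$. Your additional verification that $\phis_0$ has the rate-function properties of Subsection~\ref{subsec: assumptions on the test stat} and that $\phis_0(J_1)<\infty$ in the boundary case $\theta_0=\sup\Theta_0$ is sound and fills in details the paper leaves implicit.
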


\begin{proof}
We only prove (i), as the proof of (ii) is similar, whereas that of  (iii) follows directly from Theorem \ref{th:A version of G-E}.(iii).
 Since $\phi'_0(\Theta_0^o)$ is, by assumption, an  open interval, \eqref{cond1} implies that  
\begin{equation} \label{inclusion 1}
    (J_0,J_1)\subseteq \phi'_0\left(\Theta_0^o\cap(0,\infty)\right),
\end{equation}
and the first claim in (i) follows by an application of Theorem \ref{th:A version of G-E}.(i).  

    If also $\theta_0\in \Theta_0^o$, \eqref{cond1}  implies that 
    \begin{equation} \label{inclusion 2}
        (J_0,J_1]\subseteq \phi'_0(\Theta_0^o\cap(0,\infty)),
    \end{equation}
and  the second claim in (i) follows again by an application of Theorem \ref{th:A version of G-E}.(i). 
\end{proof}

\begin{corollary} \label{further sufficient condition, non-LLR}
Suppose that  the assumptions of Subsection \ref{subsec: G-E}   hold for both  $\Pro=\Pro_0$ and $\Pro=\Pro_1$. 
\begin{enumerate}
\item[(i)] If  $   0\in \Theta_0^o \cap \Theta_1^o$ and   $\phi'_0(0)<\phi'_1(0)$,  then
 \eqref{a.s. limits of eta} holds with  $J_i=\phi'_i(0)$, $i=0,1$.

\item[(ii)] If   also both $\phi_0$ and $\phi_1$ are steep, then 
\eqref{LD, non-LLR_null} holds, with $\psi_0=\phis_0$, for every $\ka>J_0$, and \eqref{LD, non-LLR_alternative} holds, with $\psi_1=\phis_1$, for every $\ka<J_1$. 
\end{enumerate}
\end{corollary}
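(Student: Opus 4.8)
The plan is to derive Corollary \ref{further sufficient condition, non-LLR} directly from Theorem \ref{th:A version of G-E} together with Corollary \ref{sufficient condition, non-LLR}. For part (i), the hypothesis $0\in\Theta_0^o\cap\Theta_1^o$ places $0$ in the interior of the effective domain of both $\phi_0$ and $\phi_1$, so each $\phi_i$ is differentiable at $0$ and $\phi_i'(0)$ exists. As noted in the second Remark following Theorem \ref{th:A version of G-E}, when $0\in\Theta_i^o$ the conditions of both parts (i) and (ii) of that theorem are met for $\Pro=\Pro_i$, and moreover $\Pro_i(T_n\to\phi_i'(0))=1$. Setting $J_i=\phi_i'(0)$ for $i=0,1$, the assumption $\phi_0'(0)<\phi_1'(0)$ gives exactly $J_0<J_1$, so \eqref{a.s. limits of eta} holds as claimed. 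This part is essentially a bookkeeping exercise: one just has to invoke part (iii) of Theorem \ref{th:A version of G-E} with $\theta=0$, or equivalently apply the standard consequence of the G\"artner--Ellis setup recalled in that remark.

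For part (ii), the added steepness hypothesis is what upgrades the pointwise large-deviation statements of Corollary \ref{sufficient condition, non-LLR} to hold on the full half-lines $(J_0,\infty)$ and $(-\infty,J_1)$. Concretely, since $0\in\Theta_0^o$ and $\phi_0$ is steep, the remark after Theorem \ref{th:A version of G-E} tells us that $\phi_0'(\Theta_0^o\cap(0,\infty))=(\phi_0'(0),\infty)=(J_0,\infty)$. Applying Theorem \ref{th:A version of G-E}.(i) with $\Pro=\Pro_0$ then yields
\[
\lim_n \tfrac1n \log \Pro_0(T_n>\ka)=-\phis_0(\ka)\qquad\text{for every }\ka>J_0,
\]
which is \eqref{LD, non-LLR_null} with $\psi_0=\phis_0$. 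Symmetrically, since $0\in\Theta_1^o$ and $\phi_1$ is steep, $\phi_1'(\Theta_1^o\cap(-\infty,0))=(-\infty,\phi_1'(0))=(-\infty,J_1)$, and Theorem \ref{th:A version of G-E}.(ii) with $\Pro=\Pro_1$ gives $\lim_n\tfrac1n\log\Pro_1(T_n\le\ka)=-\phis_1(\ka)$ for every $\ka<J_1$, i.e.\ \eqref{LD, non-LLR_alternative} with $\psi_1=\phis_1$.

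The remaining point, which is really the only substantive check, is to confirm that $\phis_0$ and $\phis_1$ qualify as the functions $\psi_0,\psi_1$ demanded in Subsection \ref{subsec: assumptions on the test stat}: they must be non-negative, convex, lower-semicontinuous, vanish at $J_0$ (resp.\ $J_1$), and be strictly monotone on $[J_0,J_1]$. Non-negativity, convexity, and lower-semicontinuity are automatic since a Legendre--Fenchel transform always has these properties. The equalities $\phis_0(J_0)=\phis_0(\phi_0'(0+))=0$ and $\phis_1(J_1)=\phis_1(\phi_1'(0-))=0$, as well as the strict monotonicity on the relevant intervals, are already asserted in parts (i) and (ii) of Theorem \ref{th:A version of G-E} under precisely the interior conditions we have here. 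I expect the main (mild) obstacle to be purely expository: making sure the intervals of validity line up, i.e.\ that steepness plus $0\in\Theta_i^o$ really does deliver the half-line $\phi_i'(\Theta_i^o\cap(0,\infty))=(J_0,\infty)$ (and its mirror image), rather than some proper subinterval. Once that identification is in hand, the corollary follows by a direct citation of Theorem \ref{th:A version of G-E}, with no computation required.
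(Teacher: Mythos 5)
Your argument is correct and follows exactly the route the paper takes: the paper's proof is a one-line citation of the remark following Theorem \ref{th:A version of G-E}, and your write-up simply unpacks that remark — part (i) from $0\in\Theta_i^o$ giving $\Pro_i(T_n\to\phi_i'(0))=1$ (equivalently, part (iii) with $\theta=0$), and part (ii) from steepness yielding $\phi_i'(\Theta_i^o\cap(0,\infty))=(\phi_i'(0),\infty)$ and its mirror image, so that Theorem \ref{th:A version of G-E}.(i)--(ii) apply on the full half-lines. The properties of $\phis_i$ you verify at the end (non-negativity, convexity, lower-semicontinuity, vanishing at $J_i$, strict monotonicity) are indeed already supplied by Theorem \ref{th:A version of G-E} and \cite[Lemma 2.2.5]{Dembo_Zeitouni_LDPBook}, so nothing is missing.
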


\begin{proof}    
This  is a direct consequence of the remark following Theorem 
\ref{th:A version of G-E}.
\end{proof}

\myremark In Section \ref{sec:examples} we show that the  assumptions of Corollary   \ref{further sufficient condition, non-LLR} are satisfied in various examples. However,  Corollary \ref{sufficient condition, non-LLR} implies  that, when $T=\bar{\Lambda}$,   for the  asymptotic theory of Section \ref{sec:asy} to  apply,  it suffices  that   \eqref{cond1}-\eqref{cond2} hold,  and the latter can be true  \textit{even if $0$ is not in the interior of either  $\Theta_0$ or $\Theta_1$}.  We explore this point in more detail next.

\subsection{The likelihood ratio case} \label{subsec: the likelihood ratio case}
In what follows, we focus on the case where $T=\bar\Lambda$ and  the assumptions of Subsection \ref{subsec: G-E}  hold for  $\Pro=\Pro_0$. Then, in view of the fact that 
\begin{equation*}
   \Exp_1[\exp\{\theta\Lambda_n\}] =   \Exp_0[\exp\{(\theta+1)\Lambda_n\}],  \quad \forall \; n \in \bN, \quad \theta \in \bR, 
\end{equation*}
 the assumptions of Subsection \ref{subsec: G-E} also  hold for  $\Pro=\Pro_1$, with 
\begin{align} 
    \phi_1(\theta)&=\phi_0(\theta+1), \quad \theta\in \bR, \label{LLR, phi_i} \\
    \Theta_1&=\Theta_0-1,  \label{LLR, phi_iii}\\
    \phis_1(\ka)&=\phis_0(\ka)-\ka, \quad \ka\in \bR. \label{LLR, phis_i}
\end{align}
From \eqref{LLR, phi_i} it follows that 
 $1$ is  the non-zero root of $\phi_0$, and as a result that $  [0,1]\subseteq \Theta_0$, since 
  $\Theta_0$ is an interval.    Since  also  $\phi_0$ is strictly convex and continuous  in $[0,1]$,  and differentiable in $(0,1)$,  we conclude that 
 \begin{align*}
& -\infty< \phi'_0(0+)<0<\phi'_0(1-)<\infty.
 \end{align*}
 From \eqref{LLR, phi_i} and \eqref{LLR, phi_iii} it similarly follows that   $-1$ is the non-zero root of   $\phi_1$, 
$[-1,0]\subseteq \Theta_1$, and
 \begin{align*}
 &-\infty<\phi'_1(-1+)<0<\phi'_1(0-)<\infty.
 \end{align*}
 Based on these observations, we can see that  the  conditions of Corollary \ref{sufficient condition, non-LLR}
 simplify considerably.

\begin{corollary} \label{coro: LLR, sufficient conditions}
If $T=\bar\Lambda$,  the assumptions of Subsection \ref{subsec: G-E} hold  for  $\Pro=\Pro_0$, and 
\eqref{SLLN on LLR} holds with $I_0=-\phi'_0(0+)$ and $I_1=\phi'_0(1-),$
then \eqref{LD, non-LLR_null} and \eqref{LD, non-LLR_alternative} hold for every $\ka\in (-I_0,I_1)$ with $\psi_0=\phis_0$ and $\psi_1=\phis_1$, respectively. Moreover, 
\begin{equation} \label{LLR, C}
    C=\phis_0(0) =\phis_1(0)   =-\inf_{\theta\in \bR}\phi_0(\theta),
\end{equation}
where $C$ is defined in \eqref{C}.
\end{corollary}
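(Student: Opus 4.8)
The plan is to obtain the two large-deviation limits from Corollary~\ref{sufficient condition, non-LLR} by verifying its conditions \eqref{cond1}--\eqref{cond2} with $\theta_0=1$ and $\theta_1=-1$, and then to compute $C$ directly from its definition in \eqref{C}. Set $J_0\equiv -I_0=\phi'_0(0+)$ and $J_1\equiv I_1=\phi'_0(1-)$; by the discussion preceding the statement these are finite and $J_0<0<J_1$. Since $\Theta_0$ is an interval containing $[0,1]$, we have $(0,1)\subseteq\Theta_0^o$, so $\Theta_0^o\cap(0,\infty)\neq\emptyset$; moreover $\phi'_0(0+)=J_0$ by definition, and $\theta_0=1\in\Theta_0\cap(0,\infty)$ satisfies $\phi'_0(\theta_0-)=\phi'_0(1-)=J_1$, which is \eqref{cond1}. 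Similarly $\Theta_1$ is an interval containing $[-1,0]$, so $(-1,0)\subseteq\Theta_1^o$ and $\Theta_1^o\cap(-\infty,0)\neq\emptyset$; by \eqref{LLR, phi_i} one has $\phi'_1(\theta)=\phi'_0(\theta+1)$, whence $\phi'_1(0-)=\phi'_0(1-)=J_1$, and $\theta_1=-1\in\Theta_1\cap(-\infty,0)$ satisfies $\phi'_1(\theta_1+)=\phi'_0(0+)=J_0$, which is \eqref{cond2}. Applying Corollary~\ref{sufficient condition, non-LLR}(i)--(ii) then yields \eqref{LD, non-LLR_null} with $\psi_0=\phis_0$ and \eqref{LD, non-LLR_alternative} with $\psi_1=\phis_1$ for every $\ka\in(J_0,J_1)=(-I_0,I_1)$; the remaining structural requirements of Subsection~\ref{subsec: assumptions on the test stat} follow from Theorem~\ref{th:A version of G-E}(i)--(ii) and standard properties of Legendre--Fenchel transforms, using that $\phi'_0$ maps $(0,1)$ onto $(J_0,J_1)$ and $\phi'_1$ maps $(-1,0)$ onto $(J_0,J_1)$, so that $\phis_0$ is strictly increasing and $\phis_1$ strictly decreasing on $(J_0,J_1)$.

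For the identity for $C$, I would first note that $\phis_0(0)=\sup_\theta\{-\phi_0(\theta)\}=-\inf_\theta\phi_0(\theta)$ straight from \eqref{def of phis}, and that $\phis_1(0)=\phis_0(0)$ by \eqref{LLR, phis_i}. It then remains to evaluate $C=\sup_{\ka\in(J_0,J_1)}\{\psi_1(\ka)\wedge\psi_0(\ka)\}$. Using $\phis_1(\ka)=\phis_0(\ka)-\ka$, we have $\psi_0(\ka)\wedge\psi_1(\ka)=\phis_0(\ka)-(\ka\vee 0)$, so the supremum splits at $\ka=0\in(J_0,J_1)$: on $(J_0,0]$ the expression equals $\phis_0(\ka)$, which is increasing on $(J_0,J_1)$ and continuous at the interior point $0$ (being finite and convex near $0$), so its supremum there is $\phis_0(0)$; on $[0,J_1)$ the expression equals $\phis_1(\ka)$, which is decreasing, so its supremum there is $\phis_1(0)=\phis_0(0)$. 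Combining gives $C=\phis_0(0)=\phis_1(0)=-\inf_\theta\phi_0(\theta)$.

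I do not expect a substantive obstacle: once Corollary~\ref{sufficient condition, non-LLR} and the structural facts about $\phi_0,\phi_1$ recorded just before the statement are available, the argument is a routine verification. The two points that need a little care are (a) that $1\in\Theta_0$ and $-1\in\Theta_1$, which is exactly the observation that $0,1$ are the two roots of the strictly convex function $\phi_0$ and $0,-1$ those of $\phi_1$, so that the intervals $\Theta_0,\Theta_1$ contain $[0,1]$ and $[-1,0]$; and (b) passing the supremum over the open interval $(J_0,J_1)$ down to the value at $0$, which only requires continuity of the convex functions $\phis_0,\phis_1$ at the interior point $0$ of their effective domains.
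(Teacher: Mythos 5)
Your proposal is correct and follows essentially the same route as the paper: the large-deviation limits are obtained by checking the hypotheses of Corollary \ref{sufficient condition, non-LLR} via the structural facts recorded just before the statement (with $\theta_0=1$, $\theta_1=-1$), and the identity for $C$ is obtained by locating the crossing point of $\psi_0$ and $\psi_1$, which by \eqref{LLR, phis_i} occurs at $\ka=0$. Your version merely spells out the monotonicity argument behind the paper's one-line claim that the supremum in \eqref{C} is attained where $\psi_0=\psi_1$.
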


\begin{proof}
From the discussion prior to statement of Corollary
\ref{coro: LLR, sufficient conditions} it follows that 
the conditions of  Corollary \ref{sufficient condition, non-LLR} are satisfied. To show \eqref{LLR, C}, we note that the supremum in the definition of $C$ in \eqref{C} is attained when $\psi_0=\psi_1$, or equivalently when $\phi_0^*=\phi_1^*$.   Comparing with \eqref{LLR, phis_i} completes the proof.\\
\end{proof}

\myremark  Suppose that   $T=\bar\Lambda$ and that  the assumptions of Subsection \ref{subsec: G-E} hold  for  $\Pro=\Pro_0$. Then, from Corollary  \ref{further sufficient condition, non-LLR}  it follows that a sufficient condition for  \eqref{SLLN on LLR}  to hold, with 
$I_0=-\phi'_0(0)$ and $I_1=\phi'_0(1)$, is that   $\{ 0, 1\} \subset \Theta^o_0$. 
However,  as we mentioned earlier,  the assumptions of  Corollary
\ref{coro: LLR, sufficient conditions}  may hold  even when $\Theta_0=[0,1]$, in which case   $\Theta_1=[-1,0]$ and  $(-I_0, I_1)=\phi'_i(\Theta_i^o)$, $i\in\{0,1\}$.  
The importance of this observation becomes clear in the iid setup, on which we focus next.

\subsubsection{The iid setup} \label{subsec:iid}
We end this section by showing that the conditions of Corollary \ref{coro: LLR, sufficient conditions}  are satisfied in  the iid setup of Subsection \ref{subsec: iid_setup}  as long as the Kullback-Leibler divergences defined in \eqref{KL divergences are positive} are positive and finite, or equivalently,  the expectation of 
  $\Lambda_1=\log \left(f_1(X_1) / f_0(X_1) \right)$ is non-zero and finite under both $\Pro_0$ and $\Pro_1$.

Indeed, in this case,  \eqref{SLLN on LLR} holds  with $I_0=D(f_0\| f_1)$ and $I_1= D(f_1\| f_0)$ by Kolmogorov's Strong Law of Large Numbers, and clearly
$$\phi_0(\theta)=\log\Exp_0[\exp\{\theta\Lambda_1\}], \quad \theta\in \bR.$$
Since $\phi_0$ is the cumulant generating function of a non-degenerate distribution and $[0,1]\subseteq\Theta_0$, $\phi_0$ is strictly convex in $\Theta_0$, differentiable in $\Theta_0^o$, continuous at 0 and 1, and satisfies
$$\phi'_0(0+)=\Exp_0[\Lambda_1]=-I_0\quad \text{ and } \quad \phi'(1-)=\frac{\Exp_0[\Lambda_1\exp\{\Lambda_1\}]}{\Exp_0[\exp\{\Lambda_1\}]}=I_1$$
(see, e.g. \cite[Excercise 2.2.24]{Dembo_Zeitouni_LDPBook}).

\section{Implementation via importance sampling} \label{sec: IS}
The  proposed designs for  the multistage tests in Section \ref{sec:formulation} require knowledge of  the functions
$\n$   and $\thre$,   defined in  \eqref{n*(alpha,beta)}. These  do not  admit,  
in general, closed-form expressions and need to be approximated. For any given 
$\alpha$ and $\beta$ in $(0,1)$,   $\n(\alpha,\beta)$ and  $\thre(\alpha,\beta)$  can be approximated by  estimating   $\Pro_0(\Stat_n>\ka)$ and   $\Pro_1(\Stat_n\leq \ka)$ for different $n$ and $\ka$,  and finding the  minimum  $n$ for which there exists a $\ka$ so that the first probability does not exceed $\alpha$ and  the second does not exceed  $\beta$.

  If  it is convenient to simulate the sequence $X$ under $\Pro_0$ and $\Pro_1$, 
a simple method for the estimation of   $\Pro_0(\Stat_n>\ka)$ and   $\Pro_1(\Stat_n\leq \ka)$  is plain Monte-Carlo simulation.  However,  when these probabilities are very small,  this approach may  not be efficient, or even feasible. Indeed, if the probability of interest is $10^{-a}$ for some $a>0$,  the minimum number of simulation runs  needed
for the \textit{relative error} of the  Monte-Carlo estimator to be at most $1\%$ is $10^{a+4}$. Therefore, when the probability of interest is very small, a different  method may need to be applied for its estimation, such as \textit{importance sampling} \cite{Bucklew_Book}.

To illustrate this method,   we focus on the estimation of  $\Pro_0(\Stat_n>\ka)$, as a completely analogous discussion applies to the estimation of   $\Pro_1(\Stat_n\leq \ka)$.  We observe that if  $\Qro$   is a distribution of $X$
that is  mutually absolutely continuous with   $\Pro_0$ on $\cF_n$ for every $n \in \bN$, then 
$ \Pro_0(\Stat_n>\ka)  = \Exp_\Qro\left[   Z_{n,\ka}(\Qro)  \right]$, where 
\begin{align} \label{IS_identity}
  Z_{n,\ka}(\Qro) & \equiv \frac{d\Pro_0}{d\Qro}(\cF_n) \cdot   1\{ \Stat_n>\ka \}
\end{align}
and  $\Exp_\Qro$ denotes expectation under $\Qro$. Thus, if  it is possible to simulate  $X$ under $\Qro$,   $\Pro_0(\Stat_n>\ka) $ can be estimated  by  averaging
    $Z_{n,\ka}(\Qro)$ over a  large number of independent realizations of $X$ in which it is distributed according to $\Qro$.

The question then  is how to select the \textit{importance sampling distribution} $\Qro$, so that  the \text{relative error}
    of the induced  estimator is small even when  $\Pro_0(\Stat_n>\ka) $  is  small.   To answer it,  we   assume that the  assumptions of Corollary  \ref{sufficient condition, non-LLR}.(i) (resp. Corollary
\ref{coro: LLR, sufficient conditions}) hold when $T\neq \bar{\Lambda}$ (resp.  $T=\bar{\Lambda}$) and fix  $\ka$ in $(J_0,J_1)$ (resp. $(-I_0, I_1)$), in which case   $\Pro_0(T_n>\ka)$ decays exponentially fast in $n$.  Then,  squaring both sides in \eqref{IS_identity},  applying the  Cauchy-Schwarz inequality,
taking logarithms on both sides, dividing by $n$, letting $n\to \infty$, and applying \eqref{LD, non-LLR_null}, we obtain
    \begin{equation}
      \underset{n}{\underline\lim}\, \frac{1}{n} \log \Exp_\Qro \left[  Z^2_{n,\ka}(\Qro) \right]\geq -2 \, \psi_0(\ka).
    \end{equation}
The latter is essentially a universal asymptotic lower bound on the variance of any importance sampling estimator.  As  it is common in the relevant  literature (see, e.g., \cite[Chapter 5]{Bucklew_Book}), we  refer to   $\Qro$ as  \textit{logarithmically efficient} for the estimation of $\Pro_0(T_n>\ka)$   if it attains this asymptotic  lower bound, i.e., if 
 \begin{equation} \label{logarithmically efficient}
      \underset{n}{\overline\lim}\, \frac{1}{n} \log \Exp_\Qro \left[   Z^2_{n,\ka}(\Qro)  \right] \leq -2 \, \psi_0(\ka). 
    \end{equation}

Recalling the definition of the exponential tilting $\Qro_{0,\theta}$ in \eqref{exponential_tilting},  for every $n \in \bN$ and $\theta \in \Theta_0^o$ we have
    \begin{align*}
      \Exp_{\Qro_{0,\theta}}\left[  Z^2_{n,\ka}(\Qro_{0,\theta})  \right] &= \Exp_{\Qro_{0, \theta} }\left[ \exp\{-2n \left( \theta\, \Stat_n-\phi_{0,n}(\theta) \right) \};\, \Stat_n>\ka \right] \\
     & \leq \exp\{ -2n (\theta \,  \ka-\phi_{0,n}(\theta) )\}.
    \end{align*}
Taking logarithms, dividing by $n$ and letting $n\to \infty$ we obtain
    \begin{align*}
   \underset{n}{\overline\lim} \, \frac{1}{n} \log   \Exp_{\Qro_{0,\theta}} \left[  Z^2_{n,\ka}(\Qro_{0, \theta})  \right] 
    \leq -2(\theta \, \ka -\phi_{0}(\theta)) .
    \end{align*}
Therefore, when $\theta=\vartheta_0(\ka)$, where $\vartheta_0$ is the inverse function of $\phi'_0$,  the right-hand-side is equal to  $-2\psi_0(\ka)$, which proves that 
$\Qro_{0, \vartheta_0(\ka)}$  is  logarithmically efficient for the estimation of  $\Pro_0(T_n> \ka)$.

Working similarly, we can see that if  the assumptions of Corollary  \ref{sufficient condition, non-LLR}.(ii) (resp. Corollary
\ref{coro: LLR, sufficient conditions}) hold when $T\neq \bar{\Lambda}$ (resp.  $T=\bar{\Lambda}$),   a logarithmically efficient importance sampling distribution for the estimation of 
 $\Pro_1(T_n\leq \ka)$  when $n$ is large 
 is  $\Qro_{1, \vartheta_1(\ka)}$, where 
 $\vartheta_1$ is the inverse function of $\phi'_1$.  In Subsection \ref{subsec: expon_family} 
we present an example where $\Qro_{0, \vartheta_0(\ka)}$ and $\Qro_{1, \vartheta_1(\ka)}$  coincide. 

Finally, we observe that by  Corollary \ref{sufficient condition, non-LLR}.(iii) it follows  that 
$$\Qro_{i, \vartheta_i(\ka)}( T_n \to \ka  )= 1, \quad i \in \{0,1\}.$$
This  suggests that if it is not convenient to simulate $X$ under  the logarithmically efficient importance sampling distributions,
 a   potential strategy  for   estimating   $\Pro_0(T_n > \ka)$ and  $\Pro_1(T_n\leq \ka)$,   simultaneously, is to  apply  importance sampling using a distribution  under which \textit{it is convenient to simulate $X$ and  $T_n$ converges almost surely  to $\ka$ as $n \to \infty$}.  
We apply this  strategy successfully in two non--iid testing problems  in Section \ref{sec:numerical}.

\section{Examples} \label{sec:examples}
In this section we focus on  three concrete  testing problems,  with which we illustrate  the  general results of the previous sections.
Specifically, for each of these testing problems we show that the conditions of Subsection \ref{subsec: assumptions on the test prob} hold, and also that the conditions of  Subsection \ref{subsec: assumptions on the test stat} hold for   $T=\bar{\Lambda}$, as well as  for   an alternative test statistic. For the latter, we also compute the induced asymptotic relative efficiency, defined in \eqref{ARE}.


\subsection{Testing in a one-parameter exponential family} \label{subsec: expon_family}

In the first example of this section we let $h$ be  a density with respect to  a $\sigma$-finite measure $\nu$ on $\bS$ such  that  $M \neq \emptyset$, where
\begin{equation} \label{exp family} 
M \equiv \{\mu\in \bR: \varphi(\mu)<\infty\}^o,
\quad \quad \varphi(\mu)\equiv \log\int_{\bS}  e^{\mu x} \, h(x)\,  \nu (dx)
\end{equation} 
and, for each $\mu \in M$,   we set 
$$h_\mu(x) \equiv h(x) \;  e^{\mu x - \varphi(\mu)}, \quad x \in \bS,
$$  noting that $h_\mu$ is also a density with respect to $\nu$, with the same support as $h$. We  denote by $\bP_\mu$  the distribution of $X$, and by $\bE_\mu$ the corresponding expectation,   when $X$ is a  sequence of independent  random elements with common density $h_\mu$, and consider the testing setup of Subsection \ref{subsec: one-sided testing problem}. In this context, the log-likelihood ratio statistic in \eqref{LLR_iid_setup}  becomes 
\begin{equation} \label{llr in exp family}
    \Lambda_n=(\mu_1-\mu_0) \sum_{i=1}^n X_i-n\,  ( \varphi(\mu_1)-\varphi(\mu_0)), \quad n\in \bN,
\end{equation}
and, for each $\mu \in M$,   it is a random walk under $\bP_\mu$  with drift
\begin{equation}\label{mean} \bE_\mu[\Lambda_1]= (\mu_1-\mu_0) \, \varphi'(\mu)-  ( \varphi(\mu_1)-\varphi(\mu_0)). 
\end{equation}
Thus,  setting $\mu$ equal to $\mu_0$ and $\mu_1$, we obtain the following expressions for the Kullback-Leibler divergences  in \eqref{KL divergences are positive}:
\begin{equation*}
    \begin{split}
        & D(f_0|| f_1)=-\big( (\mu_1-\mu_0) \, \varphi^\prime(\mu_0) - \left(\varphi(\mu_1)-\varphi(\mu_0)\right) \big) \\
        & D(f_1|| f_0)= (\mu_1-\mu_0) \, \varphi^\prime(\mu_1) - \left(\varphi(\mu_1)-\varphi(\mu_0)\right).
    \end{split}
\end{equation*}
Since these are positive and finite, by  the discussion in  Subsection \ref{subsec:iid} it follows that all assumptions in Subsections  \ref{subsec: assumptions on the test prob}-\ref{subsec: assumptions on the test stat}   hold with 
\begin{align} \label{psis}
\begin{split}
I_0 &=D(f_0\|f_1),\quad I_1=D(f_1\|f_0), \quad  C=\psi_0(0),\\
        \psi_0(\ka) &=\vartheta_0(\ka)\ka-\phi_0(\vartheta_0(\ka)), \quad \forall \; 
        \ka\in (-I_0, I_1) \\
        \psi_1(\ka) &=\vartheta_1(\ka)\ka-\phi_1(\vartheta_1(\ka)), \quad \forall \; 
        \ka\in (-I_0, I_1),
         \end{split}
\end{align} 
where 
  $\vartheta_i$ is  the inverse  of $\phi'_i$, $i \in \{0,1\}$, and 
\begin{align} \label{phis}
\begin{split}
 \phi_{0}(\theta) &=  \varphi(\mu_0+\theta(\mu_1-\mu_0))- \left( \varphi(\mu_0)+\theta(\varphi(\mu_1)-\varphi(\mu_0)) \right),  \; \theta \in [0,1] \\
 \phi_{1}(\theta) &=  \varphi(\mu_1+\theta(\mu_1-\mu_0))- \left( \varphi(\mu_1)+\theta(\varphi(\mu_1)-\varphi(\mu_0)) \right),  \; \theta \in [-1,0].
 \end{split}
\end{align} 

As a result,  in this context, the asymptotic optimality of the proposed multistage tests holds when  $T=\bar \Lambda$. In fact, it  also holds when 
\begin{equation} \label{average} 
T=\bar{X}\equiv \{\bar{X}_n, n \in \bN\}, \quad \text{where} \quad  \bar X_n\equiv \frac{1}{n} \sum_{i=1}^n X_i, \quad n\in \bN.
\end{equation}
Indeed,  from  \eqref{llr in exp family} it follows that when  $T=\bar{X}$,  then   for any $\mu_0, \mu_1\in M$ we have 
\begin{equation} \label{nice linear transformation}
     \bar\Lambda_n=(\mu_1-\mu_0) \, T_n - (\varphi(\mu_1)-\varphi(\mu_0)), \quad n\in\bN,
\end{equation}
which means that   the values of  $\n(\alpha, \beta)$ and 
$\kappa^*(\alpha, \beta)$, which in general depend on the choice of the test statistic $T$,  coincide when $T=\bar X$  and  $T=\bar  \Lambda$.

\subsubsection{Importance sampling distributions}
In this setup, it is  convenient to obtain an explicit form for  the  logarithmically efficient importance sampling distributions for the estimation of $\Pro_0(\bar \Lambda_n >\ka)$ and 
$\Pro_1(\bar\Lambda_n\leq \ka)$ when 
$T=\bar{\Lambda}$ for any  $\ka\in (-I_0,I_1)$.  Indeed, for any   $\ka\in (-I_0,I_1)$ we have:
$$\Qro_{0, \vartheta_0(\ka)} =\Qro_{1, \vartheta_1(\ka)}=   \bP_\mu,$$
 where  $\mu \in (\mu_0, \mu_1)$ is such that $\bE_\mu[\Lambda_1]=\ka$.
To prove this statement, we first note that for any $n \in \bN$ and $\theta \in (0,1)$, 
by \eqref{llr in exp family} we have
\begin{align*}
   \Lambda_n\left(\bP_{\mu_0+\theta(\mu_1-\mu_0)}, \Pro_0\right) &= \Lambda_n\left(\bP_{\mu_0+\theta(\mu_1-\mu_0)}, \bP_{\mu_0} \right) \\
  & =  \theta(\mu_1-\mu_0)\sum_{i=1}^n X_i - n\left( \varphi(\mu_0+\theta(\mu_1-\mu_0))-\varphi(\mu_0) \right) \\
  &= n \left( \theta\bar\Lambda_n-\phi_0(\theta) \right),
\end{align*}
and similarly, for any $n \in \bN$ and $\theta \in (-1,0)$, 
\begin{align*}
   \Lambda_n\left(\bP_{\mu_1+\theta(\mu_1-\mu_0)}, \Pro_1\right)
  &= n \left( \theta\bar\Lambda_n-\phi_1(\theta) \right).
\end{align*}
Therefore, the exponential tiltings of $\Pro_0$ and $\Pro_1$, defined in \eqref{Lambda_n(Q_theta,P_0)},  are given  by  
 \begin{align*} 
  \Qro_{0, \theta} &=\bP_{\mu_0+\theta(\mu_1-\mu_0)}, \quad \theta \in (0,1), \\
 \Qro_{1, \theta} &=\bP_{\mu_1+\theta(\mu_1-\mu_0)}, \quad \theta \in (-1,0). 
  \end{align*}
 Differentiating the identities in \eqref{phis} and comparing with  \eqref{mean} we obtain 
\begin{align} \label{deriv_expect}
\begin{split}
\bE_{\mu_0+\theta( \mu_1-\mu_0)}[\Lambda_1] & = \phi'_0(\theta), \quad \theta \in (0,1),
\\
\bE_{\mu_1+\theta( \mu_1-\mu_0)}[\Lambda_1] & = \phi'_1(\theta), \quad \theta \in (-1,0).
\end{split}
\end{align}
The statement now follows by the definition of $\vartheta_i$ as the inverse of $\phi'_i$, where $i \in \{0,1\}$.

\subsubsection{A binary statistic}
An approach to the testing problem of this subsection, which can be  motivated by  practical constraints or robustness considerations,  is to binarize the data,  recording only whether each observation is larger, or not, than some  user-specified value in the interior of the support of $h$, say $x_*$. 
Then,  the test statistic can be written as 
\begin{equation}\label{Z}
T=\bar{Z}\equiv \{\bar{Z}_n, n \in \bN\}, \quad 
\bar Z_n\equiv \frac{1}{n} \sum_{i=1}^n Z_i, 
\quad \quad Z_i \equiv  1\{X_i>x_*\}, \quad n\in \bN,
\end{equation}
and all assumptions in Subsection \ref{subsec: assumptions on the test stat}, including  \eqref{extra assumption}, are satisfied with 
\begin{gather*}
    J_i=\Pro_i(X_1>x_*), \quad  C=-\log\sqrt{4 \, J_0 \, J_1}, \\
    \phi_i(\theta)= \log\left( J_i\, e^\theta + (1-J_i) \right), \quad \theta\in \bR, \\
    \psi_i(\ka)=\operatorname{Ber}(\ka || J_i), \quad \ka\in (0,1),
\end{gather*}
where  $i \in \{0,1\}$, and $\operatorname{Ber}(x||y)$ is the Kullback-Leibler divergence between two  Bernoulli distributions with success probabilities $x$ and $y$ respectively, i.e., 
\begin{equation} \label{h, KL of Bernoulli}
    \operatorname{Ber}(x||y) \equiv  x\log (x/y) + (1-x) \log ((1-x)/(1-y)), \quad x,y\in (0,1).
\end{equation}


\subsubsection{Testing the Gaussian mean} \label{subsub: gaussian} 
We next specialize  the above results to the 
special case of testing the mean of a  Gaussian distribution with  unit variance,
i.e., when $M=\bR$  and   $\varphi(\mu)=\mu^2/2$ for every $\mu \in \bR$ in \eqref{exp family}. For simplicity, we assume that the two parameter values under which we control the two error probabilities, $\mu_0$ and $\mu_1$, are opposite,  i.e., $\mu_1=-\mu_0=\eta$   for some $\eta>0$.  

In this case,   $\n(\alpha,\beta)$ and  $\thre(\alpha,\beta)$  in \eqref{n*(alpha,beta)}  can be computed explicitly when $T=\bar\Lambda$ or $T=\bar X$,  for any $\alpha, \beta \in (0,1)$, and do not need to be estimated via  simulation.  Specifically,   by the formulas in the general case  of this subsection we obtain 
\begin{gather*}
        I_0 =I_1=2\eta^2 \equiv I, \quad C=4 I \\
        \phi_0(\theta) =\theta(\theta-1)\, I, \quad 
\phi_1(\theta)=\theta(\theta+1)\, I, \quad \theta\in \bR \\
    \psi_0(\ka) = (I+\ka)^2/(4I), \quad \psi_1(\ka)= (I-\ka)^2/(4I), \quad \ka\in \bR,
\end{gather*}
and,  for any  $\alpha,\beta\in (0,1)$, 
\begin{equation} \label{gaussian_n_star}
    \n(\alpha,\beta)= \frac{(z_\alpha+z_\beta)^2}{2 I }  \qquad \text{and}  \qquad \thre(\alpha,\beta)
    =I \,\frac{z_\alpha-z_\beta}{z_\alpha+z_\beta},
\end{equation}
where $z_p$ is the upper $p$-quantile of the  standard Gaussian distribution.  In Figure \ref{rate 1} we plot  the functions 
$\psi_0$,  $\psi_1$, for  $T=\bar\Lambda$
and  $T=\bar Z$, when $\eta=0.5$.

Finally,  we note that in this case   the  asymptotic  relative efficiencies  in \eqref{ARE} coincide when  $T=\bar{Z}$, since 
\begin{equation} \label{h}
\text{ARE}_0=   \frac{\operatorname{Ber}(\Phi(-\eta) || \Phi(\eta))}{2\eta^2} = \frac{\operatorname{Ber}(\Phi(\eta) || \Phi(-\eta))}{2\eta^2}= \text{ARE}_1,
 \end{equation}
 where $\Phi$ denotes the cumulative distribution function of the standard Gaussian distribution and the function 
 $\operatorname{Ber}(x||y)$ is defined in \eqref{h, KL of Bernoulli}. We  note also that this quantity converges to $0.25$ as $\eta \to \infty$ and to $2/\pi$ as $\eta \to 0$.
 In  Figure \ref{ARE1} we plot the asymptotic relative efficiency in \eqref{h}  as a function of $\eta$ in $\in(0,5)$.

\begin{table}
    \begin{tabular}{cc}
        \begin{subfigure}{0.5\textwidth}\centering\includegraphics[width=0.9\textwidth]{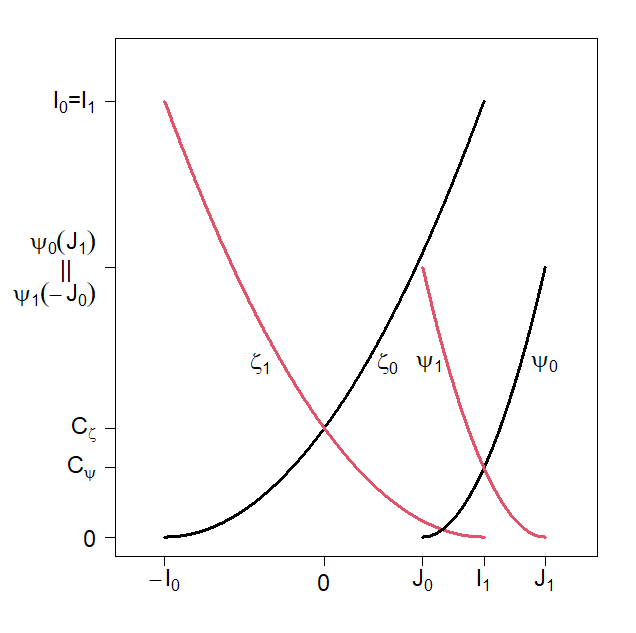}\caption{
        }\label{rate 1}\end{subfigure} &
        \begin{subfigure}{0.5\textwidth}\centering\includegraphics[width=0.9\textwidth]{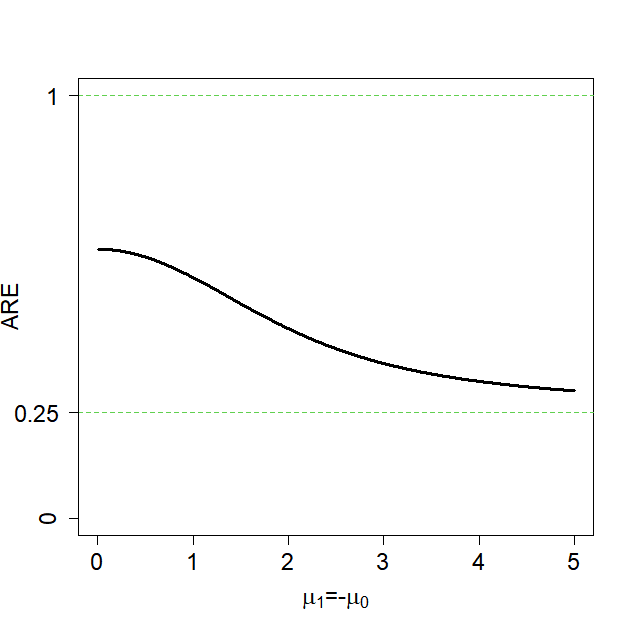}\caption{
        }\label{ARE1}\end{subfigure} \\
        \begin{subfigure}{0.5\textwidth}\centering\includegraphics[width=0.9\textwidth]{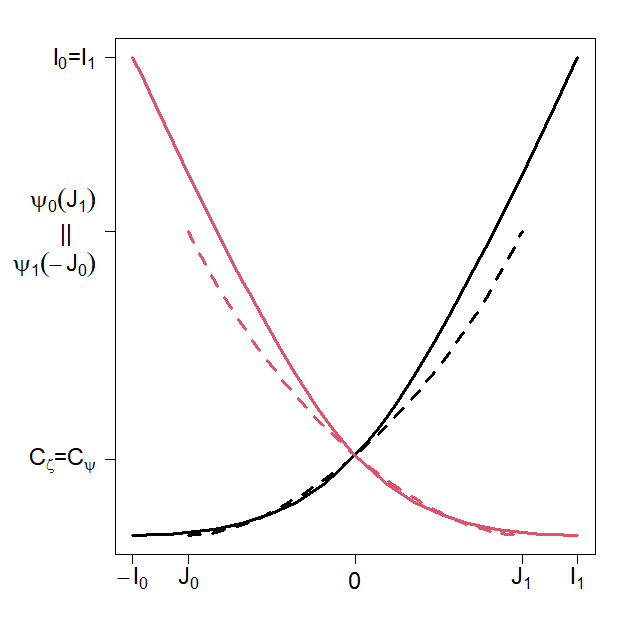}\caption{
        }\label{rate 2}\end{subfigure} &
        \begin{subfigure}{0.5\textwidth}\centering\includegraphics[width=0.9\textwidth]{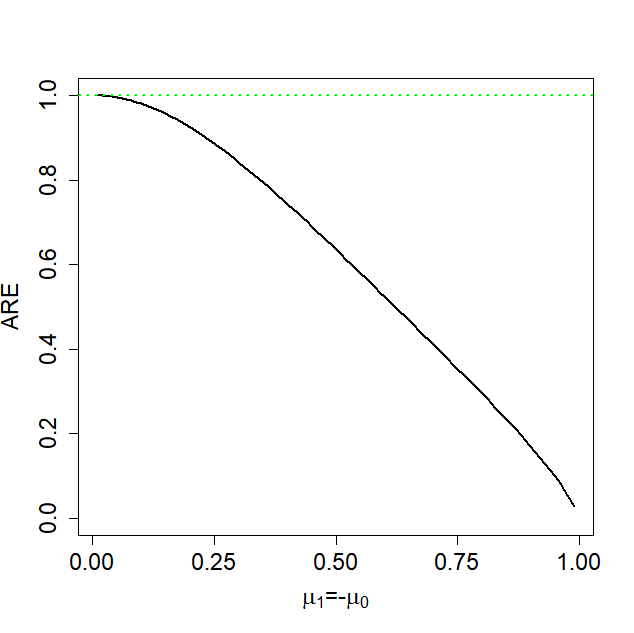}\caption{
        }\label{ARE2}\end{subfigure} \\
        \begin{subfigure}{0.5\textwidth}\centering\includegraphics[width=0.9\textwidth]{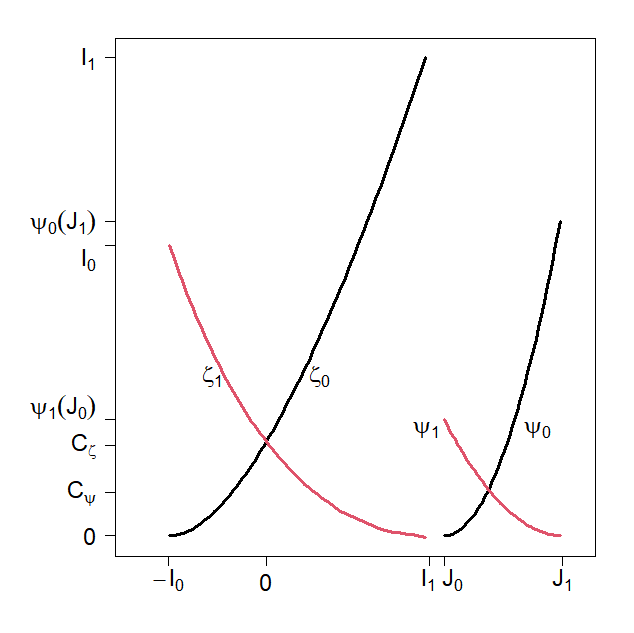}\caption{
        }\label{rate 3}\end{subfigure} &
        \begin{subfigure}{0.5\textwidth}\centering\includegraphics[width=0.9\textwidth]{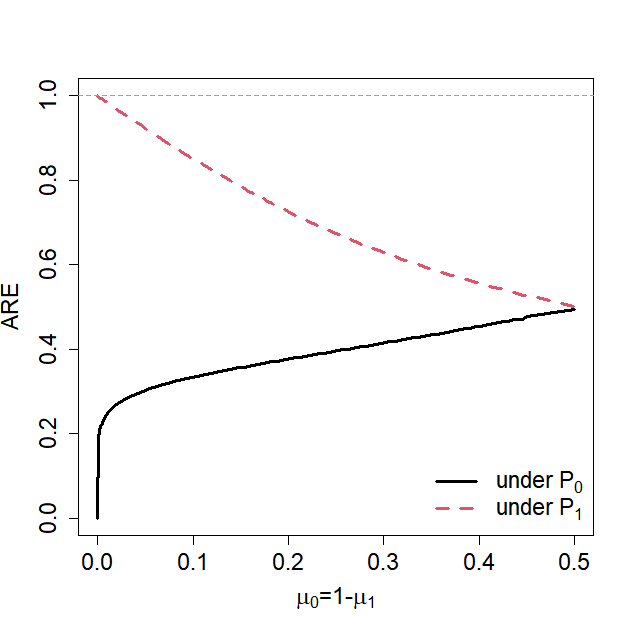}\caption{
        }\label{ARE3}\end{subfigure} \\    \end{tabular}
        \caption{In the left column we plot the functions images of $\psi_0$ and $\psi_1$ when $T=\bar\Lambda$ and when $T$ is the alternative test statistic considered in each of these examples of Section \ref{sec:examples}.  To distinguish, we write  $\zeta_i$ instead of  $\psi_i$ when $T=\bar\Lambda$, $i \in \{0,1\}$ and 
write  $C_\zeta$ and $C_\psi$ for the quantity $C$ defined in \eqref{C}. In the     right column  we plot the corresponding asymptotic relative efficiencies,   $\text{ARE}_0$ and $\text{ARE}_1$, defined in \eqref{ARE}.}
    \label{Figure: function images}
\end{table}

\subsection{Testing the coefficient of a first-order autoregressive model}  
\label{subsec: AR(1)}
In the second example of this section we assume that   $X$ follows a Gaussian first-order autoregressive  model, i.e., 
\begin{equation*}  
 X_{n} =  \mu  X_{n-1} +\epsilon_n,   \quad n \in \bN,
\end{equation*}
where   $X_0=0$,  $\{\epsilon_n, n\in \bN\}$ are iid standard Gaussian, and  $\mu$  is an unknown parameter  taking values in  $M= (-1,1)$.  We denote by $\bP_\mu$ the distribution and by $\bE_\mu$ the corresponding expectation when the true parameter is $\mu$, and consider the  testing problem  of Subsection \ref{subsec: one-sided testing problem}.

In this setup, the  log-likelihood ratio statistic in \eqref{def: llr}  becomes
    \begin{equation} \label{llr: AR1}
       \Lambda_n = (\mu_1-\mu_0) \; \left( \sum_{i=1}^n X_{i-1} X_i - \frac{\mu_1+\mu_0}{2} \sum_{i=1}^n X_{i-1}^2 \right) , \quad n\in \bN.
\end{equation}

 For any $\mu \in M$, 
from \cite[Chapter 3]{Brockwell_TimeSeriesBook} it follows that
\begin{align} \label{SLLN_AR}
 &\frac{1}{n}\sum_{i=1}^n X_{i-1}^2 \rightarrow 
\frac{1}{1-\mu^2} \qquad \text{and} \qquad  \frac{1}{n} \sum_{i=1}^n X_{i-1} X_i   \rightarrow  \frac{\mu}{1-\mu^2}   \quad \bP_\mu-\text{a.s.},
\end{align}
and consequently 
\begin{equation} \label{a.s. limit of average LLR of AR1}
    \bar{\Lambda}_n  \to   \frac{\mu_1-\mu_0}{1-\mu^2} \left( \mu-\frac{\mu_1+\mu_0}{2} \right)  \qquad \bP_\mu-\text{a.s.}
\end{equation}
 Moreover,   from \cite{Bercu_1997} it follows that, for every $\mu \in M$,  
\begin{align} \label{ARlim}
    \begin{split} \frac{1}{n} \log \bE_\mu\left[e^{\theta\Lambda_n}\right] & \to -\frac{1}{2} \log \left( \frac{1}{2}  \, p_\mu(\theta)+ \frac{1}{2}  \, \sqrt{ p^2_\mu(\theta)-4q^2_\mu(\theta)} \right)\equiv \phi(\theta; \mu), 
    \end{split}
    \end{align}
where $\theta \in \cD_{\mu} \equiv \cD_{\mu,1}\cup \cD_{\mu,2}\cup\cD_{\mu,3}$, 
\begin{align*}
    \begin{split}
         \cD_{\mu,1} &\equiv \left\{\theta\in \bR: \, \mu^2 < p_\mu(\theta) \leq 2\mu^2, \;  q_\mu^2(\theta) \leq \mu^2 (p_\mu(\theta)-\mu^2) \right\} , \\
         \cD_{\mu,2} &\equiv \left\{\theta\in \bR: \, 2\mu^2 < p_\mu(\theta)<2, \; p_\mu(\theta) > 2 |q_\mu(\theta)|\right\} ,\\
        \cD_{\mu,3} &\equiv\left\{ \theta\in \bR: \, p_\mu(\theta) \geq 2, \; q_\mu^2(\theta)\leq p_\mu(\theta)-1 \right\}, \\
 \text{and} \quad  p_\mu(\theta) &\equiv 1+\mu^2 + (\mu_1-\mu_0)(\mu_1 + \mu_0) \, \theta,  \\
 q_\mu(\theta)  &\equiv  -\mu-(\mu_1-\mu_0) \theta.
    \end{split}
 \end{align*}
 the function $\phi(\cdot; \mu)$  in \eqref{ARlim}  is differentiable in $\cD_\mu^o$,  and  $$0\in\cD^o_{\mu_0,2}, \quad \quad 1\in  (\cD_{\mu_0,1}\cup\cD_{\mu_0,2})^o.$$
Thus, setting  $\mu$ equal to  $\mu_0$ and  $\mu_1$  in \eqref{a.s. limit of average LLR of AR1}-\eqref{ARlim}, we conclude  that all assumptions in Corollary \ref{coro: LLR, sufficient conditions} are satisfied with 
$$I_0=\frac{(\mu_1-\mu_0)^2}{2(1-\mu_0^2)},  \qquad  I_1=\frac{\mu^2_1-\mu^2_0}{2(1-\mu_1^2)}, \qquad \phi_i=\phi(\cdot;\mu_i),  \quad i\in \{0,1\}.$$
Moreover,  from \eqref{LLR, C} it follows, by minimizing $\phi(\cdot; \mu_0)$, that  
\begin{equation} \label{AR(1), LLR, C}
    C=\log \sqrt{
    \frac{1-\mu_0\, \mu_1}
    {1-(\mu_0+\mu_1)^2/4} }.
\end{equation}

The functions $\psi_0$ and $\psi_1$ in this context are computed numerically and are  plotted in Figure   \ref{rate 2} when $\mu_1=-\mu_0=0.5$. We note that, in this case, they   are symmetric about the y-axis, a property that does  not hold, in general, when $\mu_1\neq-\mu_0$.

\subsubsection{The Yule-Walker estimator}
An alternative  test statistic for this testing problem   is the  Yule-Walker estimator, i.e.,
$T=\hat{\mu} \equiv \{\hat\mu_n,  n \in \bN\}$, where 
\begin{equation} \label{def: Yule Walker}
\hat\mu_n\equiv \frac{\sum_{i=1}^n X_{i-1}X_i} {\sum_{i=1}^n X_{i}^2}, \quad n \in \bN.
\end{equation}

From \eqref{SLLN_AR} it follows that   $\hat\mu_n$ is a strongly consistent estimator of $\mu$, i.e., for every $\mu \in M$, 
\begin{equation} \label{consi_Yule_Walker}
\bP_\mu( \hat{\mu}_n \to \mu)=1.
\end{equation}
  Moreover, from \cite{Bercu_1997} it follows  that, for any $\mu\in M$,
\begin{align} \label{ldp_Yule_Walker}
    \begin{split}
        & -\frac{1}{n} \log \bP_\mu(\hat\mu_n>\ka) \to 
      \psi(\ka; \mu) , \quad \forall \, \ka\in (\mu,1) \\
        & -\frac{1}{n} \log \bP_\mu(\hat\mu_n\leq \ka) \to \psi(\ka; \mu) , \quad \forall \, \ka\in (-1,\mu),
    \end{split}
\end{align}
where  the function 
$$ \psi(\ka; \mu) \equiv   \log \sqrt{ \frac{1+\mu^2-2\mu \ka}{1-\ka^2} }, \quad \ka\in (-1,1)$$
 is   strictly convex, has a unique root at $\mu$, goes to $\infty$ as $\ka$ goes to $-1$ or $1$.  Therefore, setting $\mu$ equal to $\mu_0$ and $\mu_1$ in \eqref{consi_Yule_Walker} and \eqref{ldp_Yule_Walker},  we conclude that    assumptions \eqref{a.s. limits of eta} \eqref{LD, non-LLR_null}, \eqref{LD, non-LLR_alternative}, \eqref{extra assumption}  hold with
$$J_i=\mu_i, \quad \psi_i=\psi(\cdot; \mu_i), \quad i\in \{0,1\}.$$

Interestingly,  equating  $\psi_{0}$ and $\psi_{1}$ we obtain the same value for $C$ as in \eqref{AR(1), LLR, C}.  In view of 
\eqref{ARE0, r=1},  this  implies that   using $\hat{\mu}$,  instead of $\bar{\Lambda}$,  as the test statistic,  does not reduce  the asymptotic relative efficiency of the \textit{fixed-sample-size} test  as  $\alpha, \beta \to 0$ so that  $|\log\alpha|\sim |\log\beta|$. 
 This is not the case for the proposed multistage tests, as can be seen in   Figure \ref{ARE2}, where we plot $\text{ARE}_0$ and $\text{ARE}_1$
when $\mu_0=-\mu_1$, in which case they coincide,  for different values of $\mu_1$ in  $(0,1)$. 

\subsection{Testing the transition matrix of a  Markov chain} \label{example, two-state Markov model}

In the third example of this section we assume that  $X$ is an irreducible and recurrent  Markov chain  with state space $[I]= \{0,1,\ldots, I\}$, where $I\in \bN$,  initial value $X_0=0$,  transition matrix $\Pi$, and stationary distribution $\pi$.   Moreover,  
we  note that (see, e.g.,  \cite[Theorem 5.5.9]{Durret_Book})
$$Y \equiv \left\{Y_n \equiv (X_{n-1},X_n), \, n \in \bN \right\}$$
is also  an irreducible and recurrent Markov chain, with state space $[I]^2$,  transition matrix $\Pi^\cY$ whose  $\left((i_1,i_2), \, (i_3,i_4)\right)$-th element is
$$ \Pi(i_3,i_4) \;  1\{i_2=i_3 \}, \quad 
(i_1,i_2), \, (i_3,i_4)\in [I]^2,$$
and stationary distribution 
$$\pi^\cY(i,j)= \pi(i) \, \Pi(i,j), \quad i,j\in [I].$$

For simplicity, we identify the  family of all possible distributions of $X$,  $\cP$,  with the class of all  irreducible and recurrent transition matrices of dimension  $I+1$. For each   $\Pi \in \cP$, we denote by $\bP_\Pi$ the distribution of $X$, and by  $\bE_\Pi$ the corresponding expectation,   when the transition matrix of $X$  is $\Pi$.   We consider the general testing setup of Section \ref{sec:formulation}, where $\cP_0$ and $\cP_1$ are two arbitrary subclasses of $\cP$, and  
 $$\Pro_i \equiv \bP_{\Pi_i}, \quad i\in\{0,1\}
$$
for some  arbitrary  $\Pi_i\in \cP_i$, $i\in\{0,1\}$.  In this setup, the  log-likelihood ratio statistic  in \eqref{def: llr} takes the form:
\begin{align*}  
        \Lambda_n &= \sum_{(i,j)\in [I]^2} r(i,j) \, N_n (i,j)=  \sum_{m=1}^n \f(Y_m), \quad n\in\bN,
        \end{align*}
where, for each $(i,j), y \in [I]^2$, 
\begin{align*}  
    r(i,j) &\equiv  \log \left( \frac{\Pi_1(i,j)}{\Pi_0(i,j)} \right), \quad
    N_n(i,j) \equiv 
    \sum_{m=1}^n 1\{Y_m=(i,j)\}, \\
&   \f(y) \equiv \sum_{(i,j)\in [I]^2} r(i,j) \cdot  1\{y=(i,j)\}. 
\end{align*}

For any  $\Pi \in \cP$, from \cite[Example 6.2.4]{Durret_Book} it follows that, 
for every  $ (i,j)\in [I]^2 $, 
$$\frac{1}{n} N_n(i,j)\to \pi^\cY(i,j)   \qquad \bP_\Pi-\text{a.s.}  $$
and, as a result, 
\begin{equation} \label{a.s. limt of LLR, Markov chain}
    \bar\Lambda_n \to \sum_{(i,j)\in [I]^2} r(i,j) \, \pi^\cY(i,j) \qquad \bP_\Pi-\text{a.s.}
\end{equation}
Moreover, by \cite[Theorem 3.1.1 \& 3.1.2]{Dembo_Zeitouni_LDPBook},
it follows that, for any  $\Pi \in \cP$, 
 \begin{equation} \label{set}
    \frac{1}{n} \log \Exp_\Pi[\exp\{\theta\Lambda_n\}] \to \log \lev \left( \Pi^\cY_{\theta,\f} \right) \equiv 
    \phi(\theta;\Pi), \quad \text{for every} \quad \theta \in \bR,
\end{equation}
    where $\lev$ is the functional that maps a matrix to its greatest eigenvalue,  
      $\Pi^\cY_{\theta,\f}$ is a matrix of the same dimension as $\Pi^\cY$ whose  $\left((i_1,i_2), \, (i_3,i_4)\right)$-th element is
$$ 
\Pi^\cY((i_1,i_2),(i_3,i_4)) \,  \exp\{\theta \, \f((i_3,i_4))\}, \quad 
(i_1,i_2), \, (i_3,i_4)\in [I]^2,
$$
and  the limit in \eqref{set} is a finite and differentiable function of $\theta$. Therefore, setting $\Pi$ equal to  $\Pi_0$ and $\Pi_1$ in \eqref{a.s. limt of LLR, Markov chain}-\eqref{set} we
conclude  that all assumptions in Corollary \ref{coro: LLR, sufficient conditions} are satisfied, and $I_i,\phi_i$, $i\in\{0,1\}$ can be computed accordingly. 

\subsubsection{The two-state case} 
 \label{subsubsec: two-state Markov model}
We next specialize the previous setup to the case that $I=1$, where  the transition  matrix and stationary distribution of $X$ are of the form 
$$ \Pi=\begin{pmatrix}
            p & 1-p \\
            1-\mu & \mu
        \end{pmatrix}, \quad \pi=\left( \frac{1-\mu}{2-p-\mu}, \frac{1-p}{2-p-\mu} \right), \quad \text{where} \quad p, \mu \in (0,1).$$
We fix $p\in (0,1)$, so that the only unknown parameter is   $\mu$, which  takes values in $M= (0,1)$.  Thus, we  now denote by $\bP_\mu$  the distribution,  and by $\bE_\mu$  the corresponding expectation,
of $X$ when the unknown parameter is  $\mu$, and consider the testing setup of Subsection \ref{subsec: one-sided testing problem}.  In this case,  \eqref{a.s. limt of LLR, Markov chain}  reduces to 
\begin{equation} \label{a.s. limit of average LLR, two-state Markov}
\bar{\Lambda}_n \to \frac{1-p}{2-p-\mu} \big( \operatorname{Ber}(\mu ||\mu_0)- \operatorname{Ber}(\mu ||\mu_1) \big) \quad \bP_\mu-\text{a.s.}, 
\end{equation}
where $\operatorname{Ber}(x||y)$ is defined in \eqref{h, KL of Bernoulli}, whereas $I_0$ and  $I_1$  become:
$$ I_0=\frac{1-p}{2-p-\mu_0} \operatorname{Ber} (\mu_0\|\mu_1), \quad  I_1= \frac{1-p}{2-p-\mu_1} \operatorname{Ber}(\mu_1\|\mu_0). $$

An alternative test statistic in this setup is the sample average in \eqref{average},  or equivalently,
$$T_n= \bar{X}_n \equiv \frac{1}{n}  \sum_{m=1}^n \ff(x_m), \quad \text{where} \quad \ff(x)= x. 
$$ 
Unlike the first example of this section, however, this test statistic does not lead to asymptotic optimality, as it does not admit 
a bijection with the log-likelihood ratio,  as in \eqref{nice linear transformation}.  To compute the resulting asymptotic relative efficiency, \eqref{ARE}, we note that, by  \cite[Example 6.2.4]{Durret_Book},   for any $\mu\in M$,
 \begin{equation} \label{sett} 
 \bar{X}_n \to \sum_{i\in [I]} i \, \pi(i) = \frac{1-p}{2-p-\mu} \qquad \bP_\mu-\text{a.s.}
 \end{equation}
Moreover,   by \cite[Theorem 3.1.1 \& 3.1.2]{Dembo_Zeitouni_LDPBook} it follows that,  for any $\mu\in M$,  
 \begin{equation} \label{l}
 \frac{1}{n} \log \Exp_\mu[\exp
 \{\theta \, n \, \bar{X}_n\}] \to \log \lev(\Pi_{\theta,\ff})   \equiv  \phi(\theta;\Pi),\quad \forall \; \theta \in \bR,
 \end{equation}
where $\Pi_{\theta,\ff}$ is a matrix of the same dimension as $\Pi$, whose $(i,j)$-th element is 
$$ \Pi(i,j) \, e^{\theta \, \ff(j)}, \quad i,j\in [I],$$
and   the limit  is finite, differentiable and steep in $\bR$ as a function of $\theta$.  Therefore, setting $\mu$ equal to  $\mu_0$ and $\mu_1$  in \eqref{sett}-\eqref{l}  we  conclude  that  all assumptions in Corollary \ref{further sufficient condition, non-LLR} are satisfied  with
$$J_i= \frac{1-p}{2-p-\mu_i} \qquad \text{and}\qquad \phi_i(\theta)=   \phi(\theta;\Pi_i), \quad \text{for every}  \quad \theta \in \bR, \quad  i\in \{0,1\}.$$

In Figure \ref{rate 3} we plot the functions $\psi_0$, $\psi_1$ for $T=\bar\Lambda$ and   $T=\bar X$  when $\mu_0=1-\mu_1=0.25$. 
In Figure \ref{ARE3} we plot  the asymptotic relative efficiencies  in \eqref{ARE} against $\mu_0=1-\mu_1$ for different values of 
$\mu_0$ in $(0,0.5)$.

\section{Numerical studies} \label{sec:numerical}
In this section we present the results of two numerical studies in which we compare the 3-stage test, $\three$,  the 4-stage test, $\four$, both  with  $\Stat=\bar{\Lambda}$,  against  the SPRT, $\chi'$,  when 
\begin{itemize}
    \item   testing the mean of an iid Gaussian sequence with unit variance  (Subsection \ref{subsub: gaussian}), with  $\mu_1=-\mu_0=0.5$,
\item   testing the coefficient of an first-order autoregression  (Subsection \ref{subsec: AR(1)}), when   $\mu_1=-\mu_0=0.5$,
\item   testing an entry in the transition matrix of a two-state Markov chain (Subsection \ref{subsubsec: two-state Markov model}), with $p=0.5$ and $\mu_0=1-\mu_1=0.25$.
\end{itemize}
Before we describe the two studies and present the main findings, we discuss how the tests  are designed and how their average sample sizes are computed.

\subsection{Design of tests}
In all cases, the  SPRT  in \eqref{Definition of SPRT} is designed with  $B=|\log\alpha|$ and $A=|\log\beta|$, whereas  the multistage tests are designed according to  Theorems \ref{th:3ST_error_control} and \ref{th:4ST_error_control}, with the  free parameters selected according to \eqref{free_3ST} and \eqref{free_4ST_hat}. 
The functions  $\n$ and $\thre$,  defined in \eqref{n*(alpha,beta)},
are evaluated using   the closed-form expressions in \eqref{gaussian_n_star}   in the first  testing problem  and the importance sampling method of  Section \ref{sec: IS}  in  the other two.
Specifically,  the importance sampling distribution  employed 
in the second (resp. third) testing problem is  the distribution $\bP_\mu$  for which the limit in  \eqref{a.s. limit of average LLR of AR1} (resp.  \eqref{a.s. limit of average LLR, two-state Markov}) is  equal to $\ka$.     Moreover, grid search is used for the determination of  the free parameters of the multistage tests.

\subsection{Computation of the expected sample sizes}
The expected sample sizes of the multistage tests are computed  using the formulas \eqref{3ST, first case, ESS}-\eqref{3ST, second case, ESS} and \eqref{4ST, first case, ESS}-\eqref{4ST, first case, ESS, n1<=n0} in the first testing problem, as it is possible to compute the multivariate Gaussian probabilities in these expressions, and   plain Monte Carlo in the other two. The expected sample size of the SPRT is estimated with  plain Monte Carlo in all cases.    In each  Monte Carlo application,   $10^4$ replications are utilized,  leading in all cases to \textit{relative}  errors below $5\%$. 

\subsection{The first study}

\begin{table}
    \begin{tabular}{cc}
        \begin{subfigure}{0.41\textwidth}\centering\includegraphics[width=0.9\textwidth]{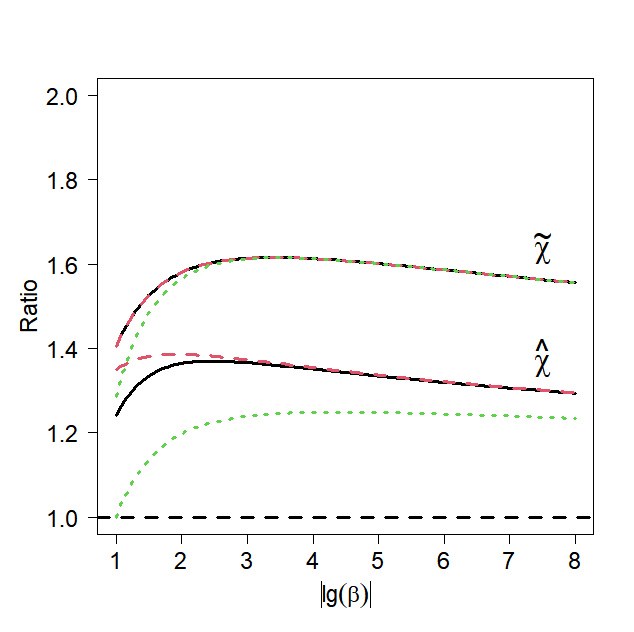}
       \caption{$\alpha=\beta$}\label{2a}\end{subfigure} &
        \begin{subfigure}{0.41\textwidth}\centering\includegraphics[width=0.9\textwidth]{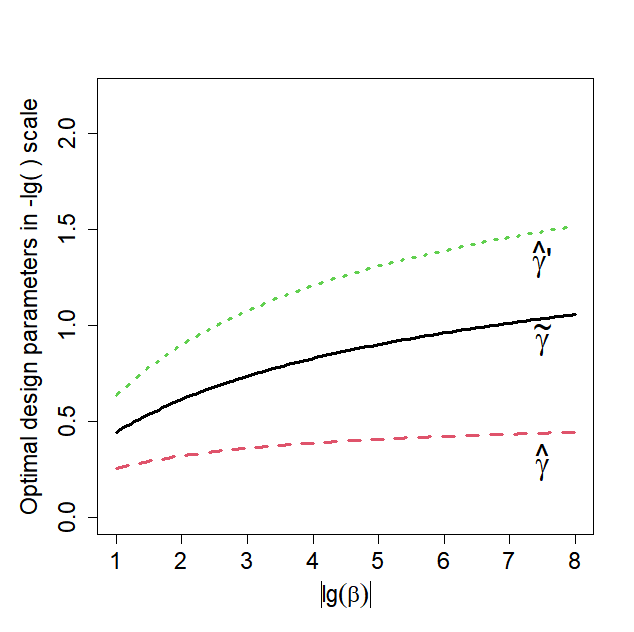}\caption{$\alpha=\beta$}\label{3a}\end{subfigure} \\
        \begin{subfigure}{0.41\textwidth}\centering\includegraphics[width=0.9\textwidth]{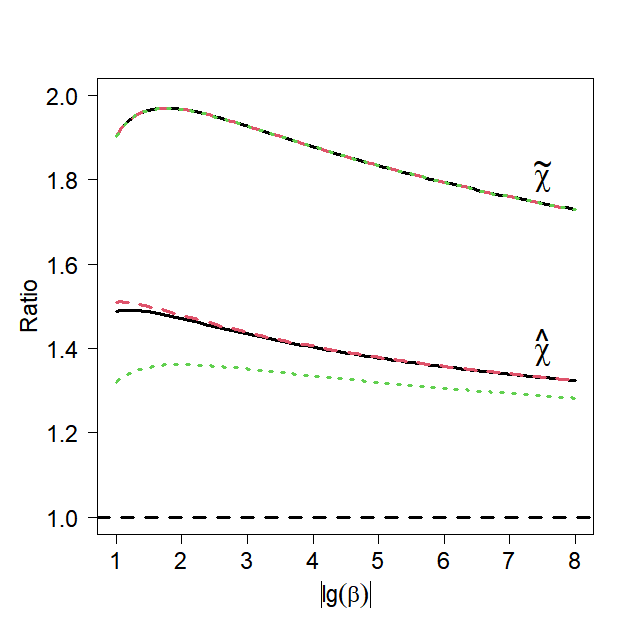}\caption{$|\log\alpha|=4\,|\log\beta|$}\label{2b}\end{subfigure} & 
        \begin{subfigure}{0.41\textwidth}\centering\includegraphics[width=0.9\textwidth]{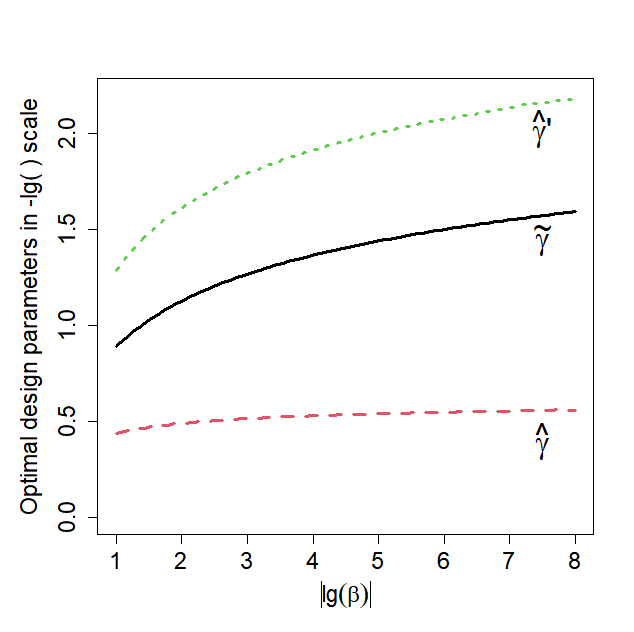}\caption{$|\log\alpha|=4\,|\log\beta|$}\label{3b}\end{subfigure}\\
        \begin{subfigure}{0.41\textwidth}\centering\includegraphics[width=0.9\textwidth]{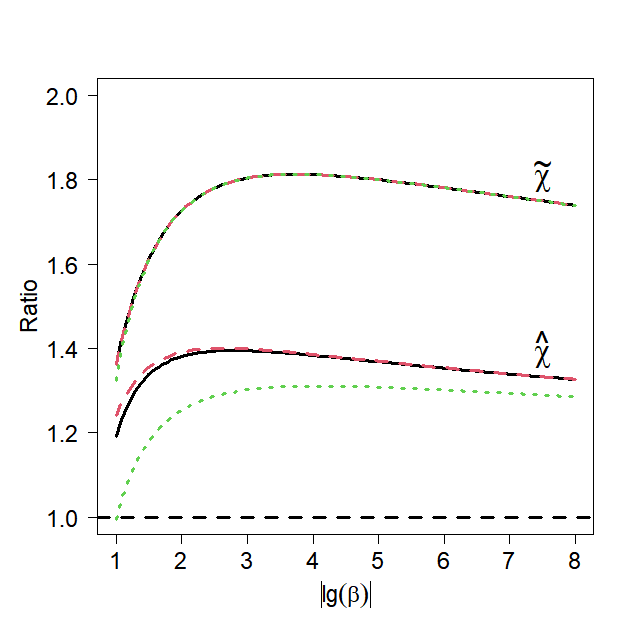}\caption{$|\log\alpha|=|\log\beta|^{1.5}$}\label{2d}\end{subfigure} &
        \begin{subfigure}{0.41\textwidth}\centering\includegraphics[width=0.9\textwidth]{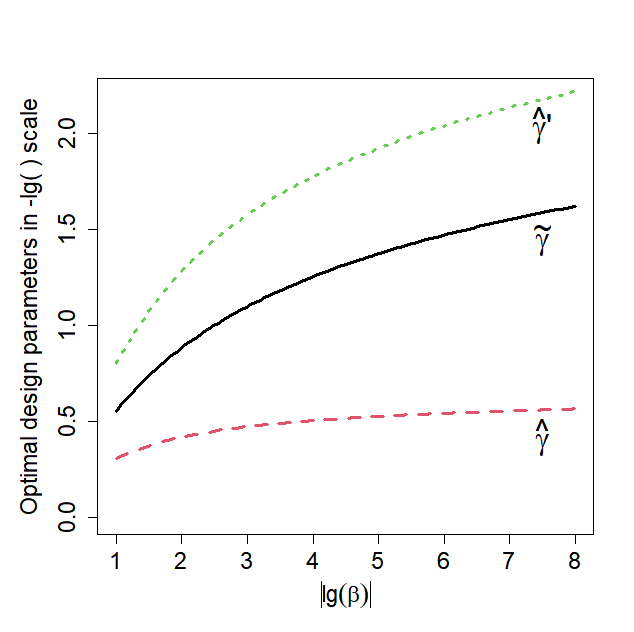}\caption{$|\log\alpha|=|\log\beta|^{1.5}$}\label{3d}\end{subfigure} \\
        \begin{subfigure}{0.41\textwidth}\centering\includegraphics[width=0.9\textwidth]{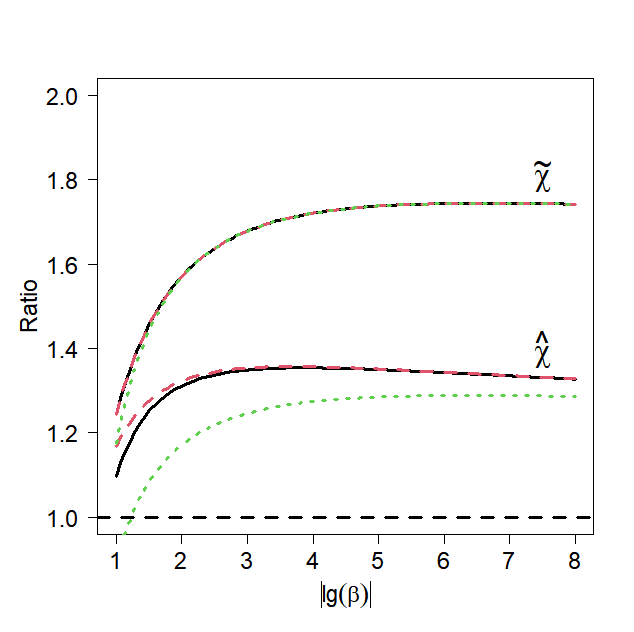}\caption{$|\log\alpha|=|\log\beta|/\beta^{0.08}$}\label{2f}\end{subfigure} &
        \begin{subfigure}{0.41\textwidth}\centering\includegraphics[width=0.9\textwidth]{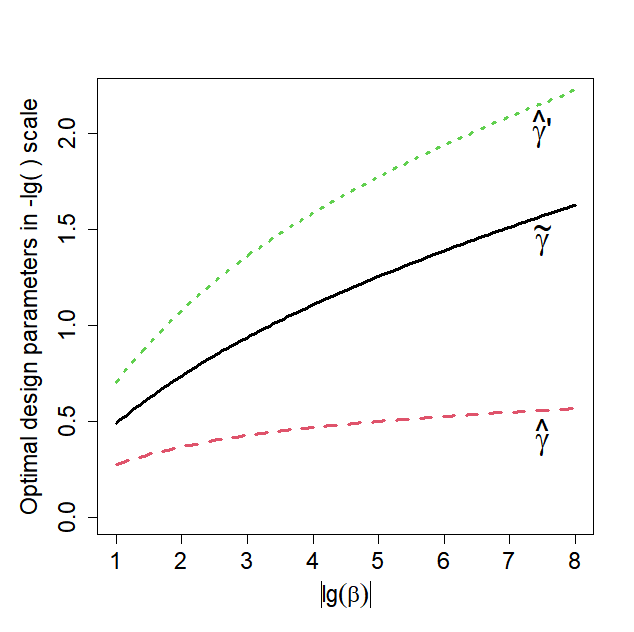}\caption{$|\log\alpha|=|\log\beta|/\beta^{0.08}$}\label{3f}\end{subfigure}
        \end{tabular}
    \caption{In the left column we plot  $\Exp_0[\TST]/\Exp_0[\SPRT]$ and $\Exp_0[\FST]/\Exp_0[\SPRT]$, along with the corresponding  bounds from Section \ref{sec:formulation}, against $|\lg \beta|$, when $\alpha$ follow the given pattern, in testing the mean of iid Gaussian sequence with unit variance.
    In the right column we plot the corresponding $|\lg \tgamma|$ in $\three$ and  
   $|\lg \hgamma|, |\lg \hgammap|$ in $\four$ against  $|\lg \beta|$. }
    \label{Figure: ratios}
\end{table}

In the first study we compare the expected sample sizes of $\three$, $\four$ and $\chi'$ under $\Pro_0$,
with the understanding that analogous results can be obtained when comparing $\three$, $\ffour$ and $\chi'$ under $\Pro_1$.  Specifically, we evaluate  $\Exp_0[\TST]/ \Exp_0[\SPRT]$ and $ \Exp_0[\FST]/ \Exp_0[\SPRT]$,  i.e., the ratio of the expected sample sizes  under $H_0$ of  $\three$ and $\four$ over that of  $\chi'$,
in the context  of the  first testing problem,
for different values of $\beta$, when $\alpha$  is given by one of the following  relationships:
\begin{align}  \label{regimes}
 & \alpha = \beta, \quad 
\alpha= \beta^4 , \quad 
 |\log\alpha|=|\log\beta|^{1.5}, \quad  
 |\log\alpha|=|\log\beta| / \beta^{0.08}.
\end{align}  

In the left column of Figure \ref{Figure: ratios} we present these ratios, together with the  non-asymptotic bounds implied by \eqref{3ST, ESS bound, null, after plugin}-\eqref{3ST, ESS bound, alternative, after plugin} and \eqref{4ST, ESS bound, after plugin, null}-\eqref{4ST, ESS bound, after plugin, alternative}. 
In these graphs we observe  a  slow,  downward trend,  as $\alpha$ and $\beta$ decrease,  in all ratios  but the one that corresponds to \textit{$\three$ in the last asymptotic regime}. This is consistent with  Theorem \ref{theorem, main}, in  which $\four$ is shown to achieve asymptotic optimality under $\Pro_0$ in all  asymptotic regimes in \eqref{regimes}, whereas $\three$  only  in the first three.  

 From these graphs we also  see that, under  $\Pro_0$,  the average sample of the 4-stage test, $\four$,  is   substantially smaller than that of the 3-stage test,  $\three$,  in all cases, and  does not exceed that of the SPRT by more than 50\%. 

Finally,  we see that the  upper bounds are very accurate approximations of the  expected sample sizes  in all cases, even for  large values of $\alpha$ and $\beta$. On the other hand, the lower bounds are similarly accurate for $\three$, but  relatively conservative for $\four$. 
To illustrate the selection of the free parameters of the two multistage tests,  
in the right column of Figure \ref{Figure: ratios}  we plot  $\tgamma$ in $\three$ and  $\hgamma,\hgammap$ in $\four$, against  $\beta$,   all of them in the $|\lg (\cdot)|$ scale.


\subsection{The second study}

\begin{table}
    \begin{tabular}{cc}
        \begin{subfigure}{0.5\textwidth}\centering\includegraphics[width=0.9\textwidth]{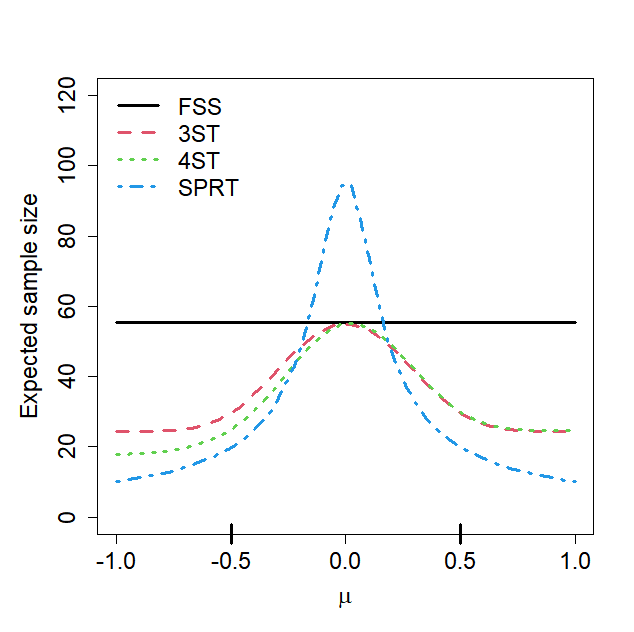}
        \begin{center}\caption{
        IID Gaussian, $\alpha=\beta=10^{-4}$}\end{center}
        \label{4a}\end{subfigure} &
        \begin{subfigure}{0.5\textwidth}\centering\includegraphics[width=0.9\textwidth]{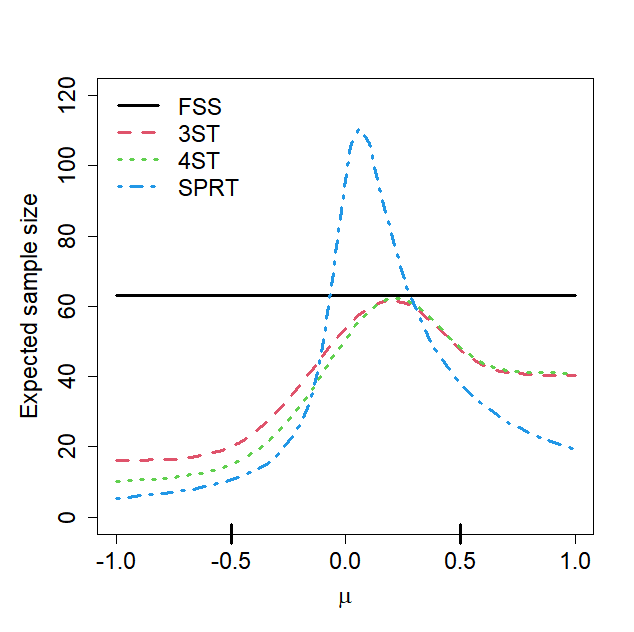}\caption{$\alpha=10^{-8}, \beta=10^{-2}$}\label{4b}\end{subfigure} \\
        \begin{subfigure}{0.5\textwidth}\centering\includegraphics[width=0.9\textwidth]{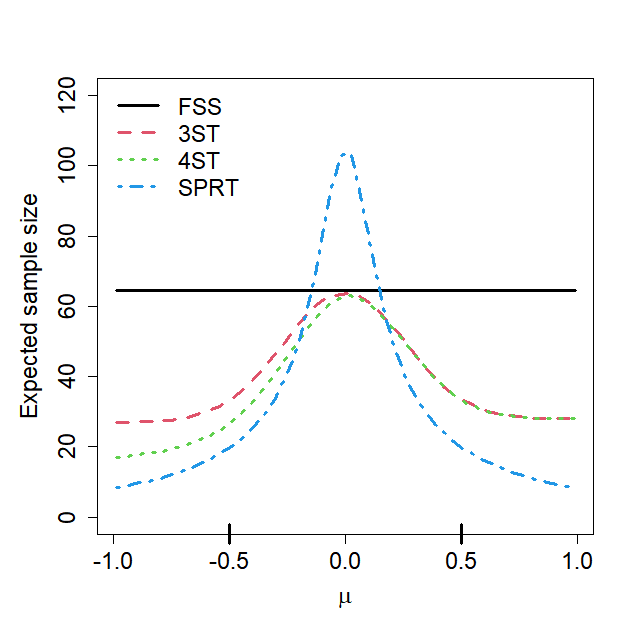}\caption{AR(1), $\alpha=\beta=10^{-4}$}\label{4c}\end{subfigure} &
        \begin{subfigure}{0.5\textwidth}\centering\includegraphics[width=0.9\textwidth]{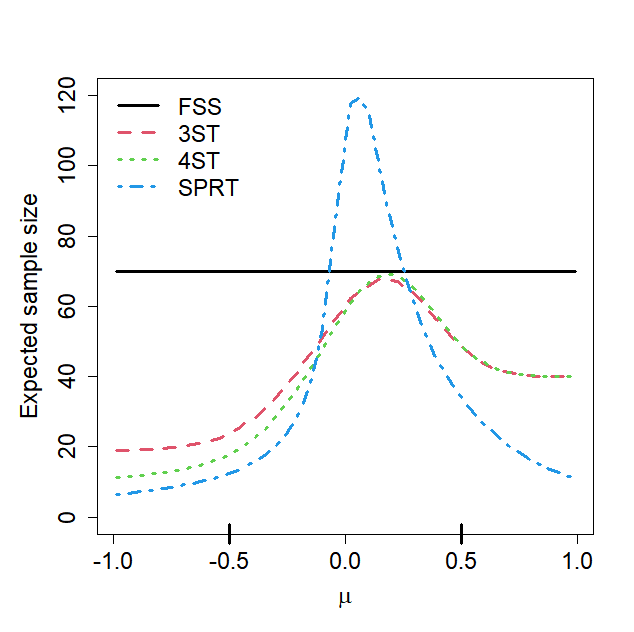}\caption{$\alpha=10^{-8}, \beta=10^{-2}$}\label{4d}\end{subfigure} \\
        \begin{subfigure}{0.5\textwidth}\centering\includegraphics[width=0.9\textwidth]{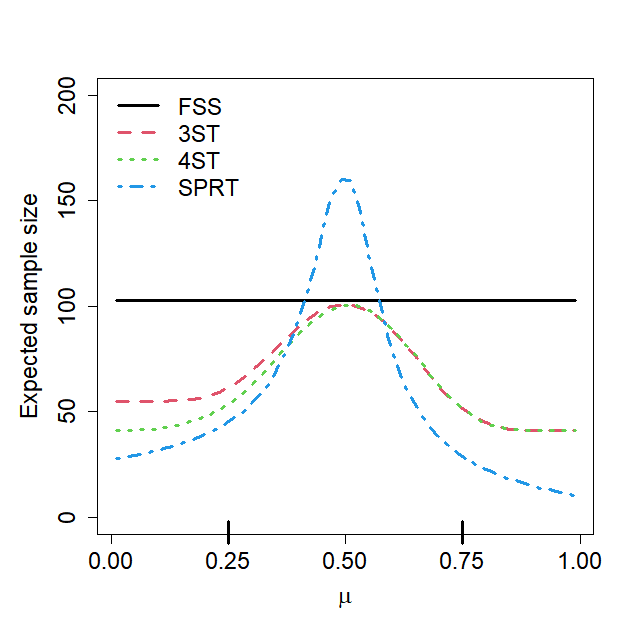}\caption{Two-state Markov, $\alpha=\beta=10^{-4}$}\label{4e}\end{subfigure} &
        \begin{subfigure}{0.5\textwidth}\centering\includegraphics[width=0.9\textwidth]{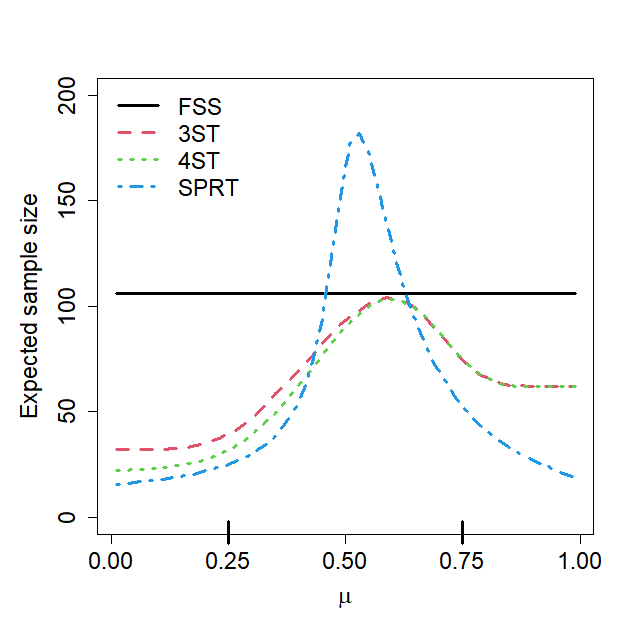}\caption{$\alpha=10^{-8}, \beta=10^{-2}$}\label{4f}\end{subfigure} \\    \end{tabular}
    \caption{We plot the expected sample sizes of the fixed-sample-size test, the two multistage tests and the SPRT  against the true value of the parameter. The values of the parameter at which we control the two types of error probabilities are highlighted on the x-axis. 
    Each row corresponds to  one of the testing  problems considered in Section \ref{sec:examples}. The first column corresponds to $\alpha=\beta=10^{-4}$, and the second  $\alpha=10^{-8}, \beta=10^{-2}$.}
    \label{Figure: performance against parameter}
\end{table}

In the second study we  compare the expected sample sizes of the various tests when the true distribution is not necessarily $\Pro_0$ or $\Pro_1$. 
Specifically, we compute $\bE_\mu[\tau]$ for different values of $\mu$, in each of the three testing problems, when   $\alpha=\beta=10^{-4}$ and when  $\alpha=10^{-8}, \, \beta=10^{-2}$. The results are presented  in  Figure \ref{Figure: performance against parameter}.  
Consistently with our discussion in Subsection  \ref{sec: upper bound in the middle}, we can see that when the true parameter  is  close to the middle  of $\mu_0$ and $\mu_1$,  the expected sample size of the  SPRT is much larger  than
those  of the  multistage tests. On the other hand, the  expected sample sizes of the multistage tests are not much larger than that of the SPRT when the  true parameter is smaller than  $\mu_0$ or  larger than  $\mu_1$.

\section{Conclusion} \label{sec:conclusions}
Given a fixed-sample-size test  that controls the error probabilities at  two specific distributions, in this paper we design and analyze a 3-stage and two 4-stage tests, with \textit{deterministic} stage sizes, 
which guarantee the same error control. 
Under some additional assumptions, which hold for many testing problems beyond the iid setup, we also conduct an asymptotic analysis for these tests.  Specifically,   we obtain asymptotic approximations for their expected sample sizes  under the two distributions with respect to which we control the  error probabilities, as the latter go to 0.
In particular, when the test statistic is the average log-likelihood ratio between these two distributions, their expected sample sizes under these two distributions are asymptotically the optimal among all sequential tests with the same error control.
Moreover, we obtain a universal asymptotic upper bound, which reveals robustness   in comparison to the corresponding SPRT.     

The above  asymptotic optimality properties require certain constraints on how asymmetrically the two error probabilities go to 0. These constraints are  removed in  \cite{ours_conf}, in an iid setup, using multistage tests in which the number of stages is fixed, but  increases, without a bound, with the   asymmetry between the two error probabilities. An interesting direction is the extension of these results beyond the iid setup, using similar ideas as in the present paper. 

In order to have  multistage tests that achieve asymptotic optimality
under  every distribution of the null and the alternative hypotheses,  at least some stage   sizes need to be  random,  as in   \cite[Section 3]{Lorden_1983}, \cite{Hayre_1985, Bartroff_2008_adaptive}. In these works, such a  uniform asymptotic optimality property was established in the case of iid data that belong to an exponential family and under the assumption of symmetric error probabilities. Ideas from the present work can be useful for extending these results  to more general distributional setups and more asymmetric error probabilities.
 
Finally, another  direction of interest is the application of  multistage tests, as the ones we consider in this work, in a multiple testing setup, similarly to  \cite{Malloy_Nowak_2014}.

\appendix

\section{} \label{app:A}
In this Appendix we prove  the results in Subsection \ref{subsec: FSS}.  To this end,  we start with a preliminary lemma, which holds under only some of the assumptions of   Section \ref{sec:asy}.

\begin{lemma} \label{Lemma: n* to infty}
\begin{enumerate}
\item[(i)] If, for every $n\in \bN$, $\Pro_1$ and $\Pro_0$ are mutually absolutely continuous when restricted to $\cF_n$, then
  $$\n(\alpha,\beta) \to \infty \quad \text{ as } \quad \alpha \wedge  \beta \to 0.$$
\item[(ii)]    If  also \eqref{a.s. limits of eta} holds, then 
   \begin{equation*} 
        J_0\leq \underline\lim \; \thre(\alpha,\beta) \qquad \text{and} \qquad  \overline{\lim}\; \thre(\alpha,\beta) \leq J_1
        \quad \text{ as } \quad \alpha \wedge  \beta \to 0.
    \end{equation*}
    \end{enumerate}
\end{lemma}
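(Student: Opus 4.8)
The plan is to prove both parts by contradiction, and the decisive structural fact we will use repeatedly is the meaning of the notation ``$\alpha\wedge\beta\to0$'': along any sequence $(\alpha_k,\beta_k)$ realizing this regime, at least one of $\alpha_k,\beta_k$ tends to $0$ while the other either also tends to $0$ or stays fixed in $(0,1)$, so that $\limsup_k\alpha_k<1$ \emph{and} $\limsup_k\beta_k<1$.

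For part (i), suppose $\n(\alpha,\beta)$ does not tend to $\infty$; then there are an integer $n_0$ and a sequence $(\alpha_k,\beta_k)$ with $\alpha_k\wedge\beta_k\to0$ and $\n(\alpha_k,\beta_k)\le n_0$. By the definition of $\n$ in \eqref{n*(alpha,beta)}, for each $k$ there are $n_k\le n_0$ and $\ka_k\in\bR$ with $\Pro_0(T_{n_k}>\ka_k)\le\alpha_k$ and $\Pro_1(T_{n_k}\le\ka_k)\le\beta_k$. Passing to a subsequence we may assume $n_k=n$ is constant; moreover, by the convention, either $\alpha_k\to0$ or $\beta_k\to0$. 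Treat the case $\alpha_k\to0$ (the case $\beta_k\to0$ is symmetric upon interchanging $\Pro_0$ and $\Pro_1$). Since $\Pro_0$ and $\Pro_1$ are mutually absolutely continuous on $\cF_n$ and $T_n$ is $\cF_n$-measurable, we may write $\Pro_1(T_n>\ka_k)=\Exp_0[\,L_n\,1\{T_n>\ka_k\}\,]$, where $L_n$ is the $\cF_n$-Radon--Nikodym derivative $d\Pro_1/d\Pro_0$, a fixed nonnegative $\Pro_0$-integrable function with $\Exp_0[L_n]=1$. Because a single integrable function is uniformly integrable — quantitatively, $\Exp_0[L_n\,1_A]\le\Exp_0[L_n\,1\{L_n>M\}]+M\,\Pro_0(A)$ for every $M>0$, the first term made small by taking $M$ large and the second by $\Pro_0(A)$ small — and $\Pro_0(T_n>\ka_k)\le\alpha_k\to0$, we obtain $\Pro_1(T_n>\ka_k)\to0$, i.e.\ $\Pro_1(T_n\le\ka_k)\to1$. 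This contradicts $\Pro_1(T_n\le\ka_k)\le\beta_k$ together with $\limsup_k\beta_k<1$.

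For part (ii), assume for contradiction that $\limsup\thre(\alpha,\beta)>J_1$ (the bound $\liminf\thre(\alpha,\beta)\ge J_0$ is entirely analogous). Then there are $\delta>0$ and a sequence $(\alpha_k,\beta_k)$ with $\alpha_k\wedge\beta_k\to0$ and $\ka_k\equiv\thre(\alpha_k,\beta_k)\ge J_1+\delta$; put $n_k\equiv\n(\alpha_k,\beta_k)$, so that $n_k\to\infty$ by part (i). Since almost-sure convergence implies convergence in probability, assumption \eqref{a.s. limits of eta} yields $\Pro_1(|T_m-J_1|\ge\delta)\to0$ as $m\to\infty$, hence
$$\Pro_1(T_{n_k}\le\ka_k)\ \ge\ \Pro_1(T_{n_k}<J_1+\delta)\ \ge\ \Pro_1(|T_{n_k}-J_1|<\delta)\ \longrightarrow\ 1 .$$
This contradicts $\Pro_1(T_{n_k}\le\ka_k)\le\beta_k$ together with $\limsup_k\beta_k<1$. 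The lower bound is symmetric: from $\ka_k\le J_0-\delta$ one gets $\Pro_0(T_{n_k}>\ka_k)\ge\Pro_0(|T_{n_k}-J_0|<\delta)\to1$, contradicting $\Pro_0(T_{n_k}>\ka_k)\le\alpha_k$ with $\limsup_k\alpha_k<1$.

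The only subtle point is in part (i): one should \emph{not} try to force $\ka_k$ to converge (it need not), but instead use that the mass of a fixed integrable density over a set of vanishing probability vanishes; the rest is routine handling of subsequences and of the meaning of the regime $\alpha\wedge\beta\to0$. Part (ii) is then a short consequence of part (i) combined with the almost-sure limits in \eqref{a.s. limits of eta}.
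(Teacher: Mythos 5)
Your proof is correct. Part (ii) is essentially the paper's own argument: you use part (i) together with the almost-sure limits in \eqref{a.s. limits of eta} to force $\Pro_0(T_{n_k}>\ka_k)\to 1$ (resp.\ $\Pro_1(T_{n_k}\le\ka_k)\to 1$) when the threshold strays below $J_0-\delta$ (resp.\ above $J_1+\delta$), contradicting the error constraints since both $\limsup\alpha_k<1$ and $\limsup\beta_k<1$ in this regime. Part (i), however, follows a genuinely different route. The paper first exploits the monotonicity of $\n$ in its arguments to reduce to the case where one error level is fixed and the other tends to $0$, extracts a subsequence on which $\n(\alpha,\beta_n)$ equals a constant $m$, bounds the thresholds below by $z_\alpha\equiv\inf\{z:\Pro_0(T_m>z)\le\alpha\}$, and then exhibits the event $\{T_m\le z_\alpha\}$, which is forced to have $\Pro_1$-measure $0$ while having $\Pro_0$-measure at least $1-\alpha>0$ — a direct violation of mutual absolute continuity. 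You instead work with an arbitrary sequence, fix $n$ by a finite pigeonhole on $n_k\le n_0$, and transfer the smallness of $\Pro_0(T_n>\ka_k)$ to $\Pro_1(T_n>\ka_k)$ via the uniform integrability of the single Radon--Nikodym derivative $L_n=d\Pro_1/d\Pro_0$ on $\cF_n$. Your version buys a symmetric treatment of the two cases and dispenses with both the monotonicity reduction and any control of the thresholds $\ka_k$; the paper's version is slightly more elementary (no uniform-integrability estimate) and isolates a single concrete event witnessing the failure of absolute continuity. Both arguments are sound and use the hypothesis of mutual absolute continuity on $\cF_n$ in an essential way.
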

\begin{proof} 
(i)  Since $\n$ is decreasing in both its arguments,  it suffices to show $\n(\alpha,\beta)$ goes to infinity  when only one of  $\alpha$  and $\beta$ goes to $0$, while the other one is fixed. Without loss of generality, we assume that  $\alpha$   is fixed and $\beta \to 0$.  We argue by contradiction and suppose that $\n(\alpha,\beta)\not\to \infty$ as $\beta\to 0$.  From this assumption and the fact that 
$\n$ is decreasing  in both  its arguments 
we conclude that there exists an $m\in \bN$ and a sequence 
$(\beta_n)$  with  $\beta_n \to 0$ such that  $\n(\alpha,\beta_n) = m$, $\forall\,n\in \bN$. Then,  for every $n \in \bN$ we have $\thre(\alpha,\beta_n)\geq z_{\alpha}$, where 
$$ z_{\alpha}\equiv \inf\{ z\in \bR: \, \Pro_0(\Stat_{m}>z) \leq \alpha \}>-\infty,$$
and subsequently 
$$ \beta_n \geq \Pro_1(\Stat_{\n(\alpha,\beta_n)}\leq \thre(\alpha,\beta_n)) = \Pro_1(\Stat_{m}\leq \thre(\alpha,\beta_n)) \geq \Pro_1(\Stat_{m}\leq z_{\alpha}).$$
Letting $n \to \infty$ we obtain 
$\Pro_1(\Stat_{m}\leq z_{\alpha})=0$.
By the definition of $z_\alpha$ we also have 
$\Pro_0(\Stat_{m}\leq z_{\alpha})\geq 1-\alpha>0$.
This violates the assumption that $\Pro_0$ is absolutely continuous to $\Pro_1$ when restricted to $\cF_m$, thus, we have reached a contradiction. 

(ii)  We only prove  the first inequality, as the proof of the second  is similar.  Without loss of generality, we assume that $\beta \to 0$, while  $\alpha$ is either fixed or goes to 0.
 We argue by contradiction and suppose that $\underline\lim\,\thre(\alpha,\beta)<J_0$.  Then, there exists an  $\epsilon>0$  so that 
$\underline\lim\, \thre(\alpha,\beta)\leq J_0-2\epsilon$
and  we can find a sequence $(\alpha_n, \beta_n)$, such that $\beta_n \to 0$, $(\alpha_n)$ is either constant or also goes to 0,  and  $\thre(\alpha_n,\beta_n)\leq J_0-\epsilon$ for every $n \in \bN$.  Then, for every $n \in \bN$,
$$\alpha_n \geq \Pro_0(T_{\n(\alpha_n, \beta_n)}>\thre(\alpha_n, \beta_n))\geq \Pro_0(T_{\n(\alpha_n, \beta_n)}>J_0-\epsilon).$$ 
In view of (i) and assumption \eqref{a.s. limits of eta},
 the lower bound goes to 1 as  $n \to \infty$,  which contradicts the fact that the sequence 
 $(\alpha_n)$ is bounded away from 1. 
\end{proof}

\begin{proof}[Proof of Theorem \ref{th:asymptotic of FSS with t}]
The upper bound in \eqref{Eq. ALB and AUB}  implies  that 
    $$ \n(\alpha,\beta) \lesssim \frac{|\log\alpha| \vee  |\log\beta|} {\psi_1(\ka)\wedge \psi_0(\ka)} = \frac{|\log(\alpha\wedge \beta)|}{\psi_1(\ka)\wedge \psi_0(\ka)} \quad \text{for every} \; \ka\in (J_0,J_1),$$ 
and optimizing with respect to $\ka$ we obtain   \eqref{C}.  Therefore, it suffices to show  \eqref{Eq. ALB and AUB}.  To lighten the notation, we set  $\n\equiv \n(\alpha,\beta)$ and $\thre\equiv \thre(\alpha,\beta)$. By the definitions of these quantities we have 
\begin{equation*}
     \Pro_0(\Stat_{\n}>\thre)  \leq 
        \alpha \quad \text{ and } \quad
         \Pro_1(\Stat_{\n}\leq \thre) \leq 
         \beta=\alpha^{\frac{|\log\beta|}{|\log\alpha|}},
\end{equation*}
and as a result 
$$  \max\left\{ \Pro_0(\Stat_{\n}>\thre), \; \Pro_1(\Stat_{\n}\leq \thre)^{\frac{|\log\alpha|}{|\log\beta|}} \right\} \leq \alpha. $$
Since for any $n \in \bN$ and $\ka_1,\ka_2\in \bR$ we have $$\text{either } \quad
\Pro_0(\Stat_n>\ka_1)\geq \Pro_0(\Stat_n>\ka_2) \quad \text{ or } \quad \Pro_1(\Stat_n\leq \ka_1)\geq \Pro_1(\Stat_n\leq \ka_2),$$ 
for any $\ka\in (J_0,J_1)$ we obtain 
\begin{equation*}
    \min\left\{ \Pro_0(\Stat_{\n}>\ka), \; \Pro_1(\Stat_{\n}\leq \ka)^{\frac{|\log\alpha|}{|\log\beta|}} \right\} \leq  \alpha,
\end{equation*} 
and consequently
\begin{equation*} 
\frac{\min\left\{ \frac{1}{\n} \log  \Pro_0(\Stat_{\n}>\ka), \; \frac{1}{\n} \log \Pro_1(\Stat_{\n}\leq \ka) \cdot \frac{|\log\alpha|}{|\log\beta|} \right\} }{\frac{1}{\n} \log\alpha} \geq 1. 
\end{equation*}
Then,  from Lemma \ref{Lemma: n* to infty} and \eqref{LD, non-LLR_null}-\eqref{LD, non-LLR_alternative}   we conclude that,  as $\alpha\wedge\beta\to 0$, 
\begin{equation*}
    \underline\lim \; \frac{\n}{\log\alpha} \min\left\{ -\psi_0(\ka), \; -\psi_1(\ka) \cdot \frac{|\log\alpha|}{|\log\beta|} \right\} \geq 1,  \quad \text{for every} \; \ka\in (J_0,J_1),
\end{equation*}
  which proves the asymptotic upper  bound in \eqref{Eq. ALB and AUB}.  On the other hand,  the definition of $\n$ and $\thre$ implies that, for any $\alpha, \beta \in (0,1)$,  
\begin{equation*}
    \text{either } \quad
    \alpha<\Pro_0(\Stat_{\n-1}>\thre) \quad \text{ or } \quad
    \beta<\Pro_1(\Stat_{\n-1}\leq \thre), 
\end{equation*}
and consequently
\begin{equation*}
    \alpha < \max\left\{ \Pro_0(\Stat_{\n-1}>\thre), \; \Pro_1(\Stat_{\n-1}\leq \thre)^{\frac{|\log\alpha|}{|\log\beta|}} \right\}.
\end{equation*}
Working as before we conclude that, for any  $\ka \in (J_0, J_1)$, 
\begin{equation*} 
    \frac{\max\left\{ \frac{1}{\n-1} \log  \Pro_0(\Stat_{\n-1}>\ka), \; \frac{1}{\n-1} \log \Pro_1(\Stat_{\n-1}\leq \ka) \cdot \frac{|\log\alpha|}{|\log\beta|} \right\}}{\frac{1}{\n-1} \log \alpha} < 1
\end{equation*}
for every  $\alpha, \beta \in (0,1)$, and letting $\alpha\wedge\beta\to 0$  
we obtain 
\begin{equation*}
    \overline\lim \; \frac{\n}{\log\alpha} \max\left\{ -\psi_0(\ka), \; -\psi_1(\ka) \cdot \frac{|\log\alpha|}{|\log\beta|} \right\} \leq 1.
\end{equation*}
Thus, we have established  the asymptotic lower bound in \eqref{Eq. ALB and AUB}, and the proof is complete.
\end{proof}

\begin{proof}[Proof of Theorem \ref{thm: Universal ALB}]

(i)   When both $\alpha$ and $\beta$ go to 0, this follows from the universal asymptotic lower bound in \eqref{optimal_rate}. Therefore,  it suffices to consider the case that only one of them goes to 0, while the other one is fixed. Without loss of generality, we assume that  $\beta \to 0$, while $\alpha$ is fixed, in which case it suffices to show that, for every  $\epsilon>0$, 
$$ \underline\lim \; \frac{\log \beta }{\n(\alpha, \beta)} \geq -I_0 - \epsilon. 
$$
To this end, we fix   $\epsilon>0$ and observe that,
by  Lemma \ref{Lemma: n* to infty}.(ii),    for $\beta$ small enough  we have $\thre(\alpha, \beta)>-I_0- \epsilon$    and consequently
\begin{align}    \label{qqqqq}
\begin{split}
    \beta & \geq \Pro_1(\bar\Lambda_{\n(\alpha, \beta)} \leq \thre(\alpha, \beta))
    \\ &\geq \Pro_1(-I_0-\epsilon<\bar\Lambda_{\n(\alpha, \beta)}\leq \thre(\alpha, \beta)) \\
    & = \Exp_0\left[ \exp\{ \Lambda_{\n(\alpha, \beta)} \}; \; -I_0-\epsilon<\bar\Lambda_{\n(\alpha, \beta)}\leq \thre(\alpha, \beta) \right] \\
    &\geq \exp\{-\n(\alpha, \beta) \, (I_0+\epsilon)\} \; \; \Pro_0(-I_0-\epsilon<\bar\Lambda_{\n(\alpha, \beta)} \leq \thre(\alpha, \beta)) .
    \end{split}
\end{align} 
Moreover, for any $\alpha, \beta \in (0,1)$ we have 
\begin{align}\label{qqqqq2}
\begin{split}
&\Pro_0(-I_0-\epsilon<\bar\Lambda_{\n(\alpha, \beta)} \leq \thre(\alpha, \beta))\\
 &= 1-\Pro_0(\bar\Lambda_{\n(\alpha, \beta)} \leq -I_0-\epsilon) - \Pro_0(\bar\Lambda_{\n(\alpha, \beta)}>\thre(\alpha, \beta))
 \\
& \geq \;  1-\Pro_0(\bar\Lambda_{\n(\alpha, \beta)} \leq -I_0-\epsilon) - \alpha,
 \end{split}
\end{align}
and the  probability in the lower bound of \eqref{qqqqq2} goes to zero  as $\beta\to 0$,  because of Lemma \ref{Lemma: n* to infty}.(i)  and  assumption \eqref{SLLN on LLR}. Therefore, taking logarithms on both sides of \eqref{qqqqq}, dividing by $\n(\alpha, \beta)$ and letting $\beta\to 0$ completes the proof.

(ii) We only prove that, as $\alpha \wedge \beta\to 0$,
 $$\n(\alpha,\beta)\gtrsim 
 \frac{|\log\beta|}{\psi_1(J_0)},$$ 
as the proof that   $\n(\alpha,\beta)\gtrsim |\log\alpha|/\psi_0(J_1)$  is similar. By  assumption \eqref{extra assumption}, there is an $\epsilon>0$ so that $\psi_1$ is finite and  \eqref{LD, non-LLR_alternative} holds  in $(J_0-2\epsilon,J_1)$.
From Lemma \ref{Lemma: n* to infty}.(ii) it follows that, when at least one of  $\alpha$ and $\beta$ is  small enough,  $\thre(\alpha, \beta)  >J_0-\epsilon$ and consequently 
$$ \beta\geq \Pro_1(T_{\n(\alpha,\beta)}\leq \thre(\alpha, \beta) )\geq \Pro_1(T_{\n(\alpha,\beta)}\leq J_0-\epsilon).$$
Thus, taking logarithms, dividing by $\n(\alpha,\beta)$ and   letting $\alpha \wedge \beta\to 0$ we obtain
$$ \underline\lim \; \frac{ \log \beta}{\n(\alpha,\beta)} \geq \underline\lim \; \frac{1}{\n(\alpha,\beta)} \log \Pro_1(T_{\n(\alpha,\beta)}\leq J_0-\epsilon) = -\psi_1(J_0-\epsilon),$$
where the equality follows from  Lemma   \ref{Lemma: n* to infty}.(i) and assumption \eqref{LD, non-LLR_alternative}. Since $\psi_1$ is convex, it is continuous on  the interior of its effective domain.  By assumption, $\psi_1$ is finite in a neighborhood of $J_0$,  thus, letting $\epsilon\downarrow 0$  completes the proof. \\
\end{proof}

\begin{proof}[Proof of Corollary \ref{coro:ARE}] 
The first asymptotic approximation in \eqref{ARE0, r}  follows by  setting $\ka= g^{-1}(r)$ in \eqref{Eq. ALB and AUB}, whereas the second by   \eqref{range of  g_inverse}, which implies 
$$ \psi_0\left( g^{-1}(r) \right) = r\, \psi_1\left( g^{-1}(r) \right) \quad 
\forall \; r \in (0, \infty).
$$ 
To  prove  \eqref{ARE0, r=1}  it suffices to show that 
 $$C=\psi_0(g^{-1}(1))=\psi_1(g^{-1}(1)).$$ 
Indeed,  the strict  monotonicity of $\psi_0$ and $\psi_1$ in $(J_0,J_1)$ implies that  the supremum in  \eqref{C} is attained when $\psi_0=\psi_1$, or equivalently when $g=1$.
\end{proof}

\begin{proof}[Proofs of Corollaries \ref{coro: ARE when r=0 or infty, LLR} and \ref{coro: ARE when r=0 or infty, non-LLR}]

In view of the asymptotic lower bounds in Theorem \ref{thm: Universal ALB}, it satisfies to establish only the corresponding upper bounds.  We only prove part  (i) of each Corollary, 
as the proof of (ii) is similar. 

We  show first that, for any test statistic $T$, \textit{even if  \eqref{extra assumption} does not hold}, 
\begin{align*}
\n(\alpha,\beta) &\lesssim \frac{|\log\beta|}{\psi_1(J_0)}, \quad 
\text{or equivalently  }\quad 
 \psi_1(J_0) \lesssim \frac{|\log\beta|} {\n(\alpha,\beta) } , 
 \end{align*} 
 as   $\alpha \wedge \beta\to 0$ so that  $|\log \alpha|<<|\log\beta|$. 

By assumption,   $\psi_1$ is convex and  lower-semicontinuous, thus,  it   is continuous in its effective domain, and as a result in  $[J_0, J_1]$. Therefore, to prove the above claim
  it suffices to show that, as   $\alpha \wedge \beta\to 0$ so that  $|\log \alpha|<<|\log\beta|$,
\begin{align*}
 \psi_1(\ka) &\lesssim \frac{|\log\beta|} {\n(\alpha,\beta) }  \quad \forall \; \ka\in (J_0, J_1),
\end{align*} 
which follows directly by Theorem \ref{th:asymptotic of FSS with t}.  When  $T \neq \bar{\Lambda}$, the proof is complete.  When $T=\bar{\Lambda}$, it remains to show that   $\psi_1(-I_0)\geq I_0$. Since   $\psi_1$ is  continuous in  
$[-I_0, I_1]$,  it suffices to show that    $\psi_1(\ka) \geq -\ka$ for every $\ka\in (-I_0,0)$. Indeed, for any $\ka<0$, by Markov's inequality we have
    $$ \Pro_1(\bar\Lambda_n \leq \ka) 
 \leq e^{n \ka}  \; \Exp_0[ \exp\{ \Lambda_n\}] =  e^{n \ka}  \quad \text{for all} \quad n \in \bN.$$
Taking logarithms, dividing by $n$, letting $n \to \infty$,  and applying    \eqref{LD, non-LLR_alternative}  for $\ka$ in  $(-I_0,0)$  completes the proof. 
\end{proof}

\section{} \label{app:new}
In this Section we prove
Lemma \ref{lem: ALB, multi} and 
Theorem \ref{theorem, main, non-LLR}.   The proof of Theorem
 \ref{theorem, main}  is  omitted, as it is almost identical to that of Theorem \ref{theorem, main, non-LLR}.
 
\begin{proof}[Proof of Lemma \ref{lem: ALB, multi}]
We only prove the asymptotic lower bounds under $\Pro_0$, as the  proofs of the corresponding  lower bounds under  $\Pro_1$ are similar. We first  prove the result for $\three$, in which case it   suffices to show that, 
for all $\epsilon \in (0,1)$,
\begin{align} \label{show1}
 \inf_{\gamma\in (\alpha/2,1)} \Exp_0[\TST] \gtrsim (1-\epsilon) \,\frac{|\log\beta|}{\psi_1(J_0)} \quad \text{as} \quad  \alpha, \beta \to 0.
 \end{align}
 Fix  $\epsilon \in (0,1)$.    By the non-asymptotic lower bound in \eqref{3ST, ESS bound, null, after plugin} it follows that, for any $\alpha,\beta\in (0,1)$ and $\gamma\in (\alpha/2,1)$,
$$ \Exp_0[\TST]\geq \max\big\{ \n(\gamma,\beta/2) \cdot (1-\alpha/2), \; \n(\alpha/2,\beta/2)\cdot (\gamma-\alpha/2) \big\}.$$
When, in particular,  $\gamma \leq 1-\epsilon$,
\begin{align*}
\Exp_0[\TST] 
&\geq     \n(\gamma,\beta/2) \cdot (1-\alpha/2) \geq      \n(1-\epsilon, \, \beta/2) \cdot (1-\alpha/2).
\end{align*}
and when 
$\gamma> 1-\epsilon$,
\begin{align*}
\Exp_0[\TST] &\geq \n(\alpha/2, \, \beta/2) \cdot (\gamma-\alpha/2)\geq \n(\alpha/2, \, \beta/2) \cdot (1-\epsilon- \alpha/2).
\end{align*}
By Theorem \ref{thm: Universal ALB}.(ii) it then follows that,  as $\alpha,\beta\to 0$, 
\begin{equation*}
    \begin{split}
         \inf_{\gamma\in (\alpha/2, \, 1-\epsilon)}\Exp_0[\TST] &\gtrsim \n(1-\epsilon, \, \beta/2) \gtrsim \frac{|\log\beta|}{\psi_1(J_0)}, \\
         \inf_{\gamma\in (1-\epsilon, 1)}\Exp_0[\TST] &\gtrsim  (1-\epsilon)  \cdot \n(\alpha/2,\beta/2) \gtrsim (1-\epsilon)\,\frac{|\log\beta|}{\psi_1(J_0)},
    \end{split}
\end{equation*}
and this implies \eqref{show1}. The proof for $\ffour$ is similar and omitted.
To prove the result for $\four$, it suffices to show that, for every  $\epsilon \in (0,1)$, 
\begin{align} \label{show2}
\inf_{\alpha/2<\gamma'<\gamma<1} \Exp_0[\FST] & \gtrsim (1-2\epsilon) \, \frac{|\log\beta|}{\psi_1(J_0)} \quad \text{as} \quad  \alpha,\beta\to 0. 
\end{align}
Fix $\epsilon \in (0,1)$. By the non-asymptotic lower bound in \eqref{4ST, ESS bound, after plugin, null} it follows that,  for any $\alpha,\beta\in (0,1)$  and $\alpha/2<\gamma'<\gamma<1$,
\begin{equation*}
    \begin{split}
        \Exp_0[\FST]\geq \max\big\{ &  \n(\gamma, \, \beta/3) \cdot (1-\alpha/2),        \;  \n(\gamma',\beta/3)\cdot (\gamma-\alpha/2), \\
        &  \n(\alpha/2,\beta/3) \cdot
         ( (1- \alpha/2)- (1- \gamma)-
        (1-\gamma') )\big\}.
    \end{split}    
\end{equation*}
When, in particular, $\gamma<1-\epsilon$, 
\begin{equation*}
    \begin{split}
   \Exp_0[\FST]   &\geq  \n(\gamma, \, \beta/3) \cdot (1-\alpha/2)     \\
   & \geq \n(1-\epsilon,\, \beta/3) \cdot (1-\alpha/2),
    \end{split}
\end{equation*}
when  $   \gamma' < 1-\epsilon< \gamma$,
\begin{align*}
   \Exp_0[\FST]
  & \geq     \n(\gamma', \, \beta/3)
  \cdot (\gamma-\alpha/2) \\
  &\geq  \n(1-\epsilon,\, \beta/3) \cdot (1-\epsilon-\alpha/2),   
\end{align*}
 and when  $\gamma' > 1-\epsilon$, 
\begin{align*}
   \Exp_0[\FST] & \geq           \n(\alpha/2,\, \beta/3) \cdot 
   ( (1 -\alpha/2) - (1-\gamma)- (1-\gamma'))  \\
   &\geq    \n(\alpha/2,\, \beta/3)\cdot (1-2\epsilon-\alpha/2)   .
\end{align*}

By Theorem \ref{thm: Universal ALB}.(ii) it then follows that, as $\alpha,\beta\to 0$,
\begin{equation*}
    \begin{split}
        \underset{\gamma\in (\alpha/2, 1-\epsilon)}{\inf}\Exp_0[\FST] & \gtrsim \n(1-\epsilon,\, \beta/3) \gtrsim \frac{|\log\beta|}{\psi_1(J_0)}, \\
        \underset{\substack{\gamma\in (1-\epsilon, 1) \\ \gamma'\in (\alpha/2, 1-\epsilon)}}{\inf}\Exp_0[\FST] & \gtrsim  \n(1-\epsilon,\, \beta/3) \cdot (1-\epsilon)   \gtrsim (1-\epsilon) \, \frac{|\log\beta|}{\psi_1(J_0)}, \\
        \underset{\gamma'\in(1-\epsilon, 1)}{\inf}\Exp_0[\FST] & \gtrsim \n(\alpha/2, \, \beta/3)\cdot (1-2\epsilon) \gtrsim (1-2\epsilon)\,\frac{|\log\beta|}{\psi_1(J_0)}.
    \end{split}
\end{equation*}
and this implies \eqref{show2}. 
\end{proof}

\begin{proof}[Proof of Theorem \ref{theorem, main, non-LLR}]
In view of Lemma \ref{lem: ALB, multi},  it remains to prove in each case  the corresponding  asymptotic upper bounds.

(i) Let  $\delta$ be  a function of $\alpha, \beta$ such that 
$\delta\in (\beta/2,1)$  for every  $\beta \in (0,1)$,  and 
 \begin{align} \label{main thm, condition on delta}
&\delta \to 0 \quad \text{and} \quad    |\log\delta|<<|\log\alpha|  \quad \text{as} \quad   \alpha \to 0,
\end{align}
e.g.,  $\delta=|\log \alpha|^{-\epsilon} \vee \beta$ for some $\epsilon \in (0,1)$. By the non-asymptotic upper bound in \eqref{3ST, ESS bound, alternative, after plugin} and the selection of the free parameters according to  \eqref{free_3ST}  it  follows that,  for any $\alpha, \beta \in (0,1)$, 
\begin{align*}
\Exp_1[\TST] 
& \leq \n(\alpha/2,\delta)+\left( \n(\alpha/2,\beta/2)-\n(\alpha/2,\delta) \right)\cdot  \delta \\
&\leq \n(\alpha/2,\delta)+\n(\alpha/2,\beta/2)\cdot \delta.
\end{align*}
Then, by  Corollary \ref{coro: ARE when r=0 or infty, LLR}.(ii),  Theorem \ref{th:asymptotic of FSS with t}  and \eqref{main thm, condition on delta} we conclude that 
\begin{align*}
 \Exp_1[\TST] 
&\lesssim    \frac{|\log \alpha|}{\psi_0(J_1)} +  \frac{|\log(\alpha\wedge\beta)|}{C}\,\delta  \sim    \frac{|\log \alpha|}{\psi_0(J_1)} \\ &\text{as} \quad  \alpha,\beta\to 0 \quad  \text{so that}  \quad |\log \alpha| \gtrsim |\log \beta|,
\end{align*}
and this completes the proof for $\three$. The proof for $\four$ is similar and  omitted. To prove the result for    $\ffour$,  we observe that by the non-asymptotic upper bound in \eqref{44ST, ESS bound, after plugin, alternative} and the selection of the free parameters according to  \eqref{free_4ST_check} it follows that 
\begin{align*} 
 \Exp_1[\FFST] 
  &\leq \n(\alpha/3,\delta) + (\n(\alpha/3,\delta')- \n(\alpha/3,\delta) ) \cdot  \delta
 \\
 &+ (\n(\alpha/3,\beta/2)-\n(\alpha/3,\delta') \cdot \deltap \\ &\leq \n(\alpha/3,\delta) + \n(\alpha/3,\beta/2)\cdot \delta
\end{align*}
for any $\alpha,\beta\in (0,1)$ and  $\delta', \delta$ such that   $\beta/2<\delta'<\delta<1$.
The proof  then  continues in exactly the same way as for  $\three$, i.e., by selecting $\delta$ to satisfy   \eqref{main thm, condition on delta}.

(ii) Let $\gamma,\gammap$ be functions of $\alpha$ and $\beta$ such that $\alpha<\gammap<\gamma<1$ and
\begin{align} \label{main thm, condition on gamma, gamma'}
\begin{split}
  &  |\log\gamma|  <<|\log\beta|, \quad  |\log(\gammap\wedge\beta)|\,\gamma <<|\log\beta|,\quad  |\log\alpha|\,\gammap<<|\log\beta|,\\
    & \text{as }\; \alpha,\beta\to 0 \;\text{ so that }\; |\log\beta| \lesssim |\log\alpha|\lesssim |\log\beta|/\beta^r \; \text{ for some }\; r\geq 1.
    \end{split}
\end{align}
e.g.,    $ \gamma = \; |\log\beta|^{-\epsilon}\vee \alpha$ and  $\gammap = (\beta^{r+\epsilon'}\wedge\gamma)\vee \alpha$ for some $\epsilon\in (0,1)$ and  $\epsilon'>0$.  By the non-asymptotic upper bound in \eqref{4ST, ESS bound, after plugin, null} and the selection of the free parameters according to  \eqref{free_4ST_hat} it follows that,  for any $\alpha,\beta\in (0,1)$,
\begin{align*} 
   \Exp_0[\FST]  
      & \leq \n(\gamma,\beta/3)+\big(\n(\gammap,\beta/3)-\n(\gamma,\beta/3)\big)\cdot \gamma \\
      &+ \big(\n(\alpha/2,\beta/3)-\n(\gammap,\beta/3)\big)\cdot  \gammap \\
    & \leq \n(\gamma,\beta/3) + \n(\gamma',\beta/3)\cdot \gamma+\n(\alpha/2,\beta/3)\cdot \gamma'.
\end{align*}
Then, by    Corollary \ref{coro: ARE when r=0 or infty, LLR}.(i),  Theorem \ref{th:asymptotic of FSS with t} and  \eqref{main thm, condition on gamma, gamma'}  we conclude that 
\begin{align*} 
  \Exp_0[\FST] 
&\lesssim    \frac{|\log \beta|}{\psi_1(J_0)} +  \frac{|\log(\gammap\wedge\beta)|}{C}\,\gamma + \frac{|\log(\alpha \wedge \beta)|}{C}\,\gammap \sim \frac{|\log \beta|}{\psi_1(J_0)}\\ 
    & \text{as }\; \alpha,\beta\to 0 \;\text{ so that }\; |\log\beta| \lesssim |\log\alpha|\lesssim |\log\beta|/\beta^r \; \text{ for some }\; r\geq 1.
\end{align*}

(iii) To prove the result for $\Exp_0[\TST]$, we let $\gamma$ be a  function of $\alpha$ and $\beta$ such that $\gamma\in(\alpha,1)$ and 
\begin{equation} \label{main thm, condition on gamma}
\begin{split}
&    |\log\gamma|  <<|\log\beta| \quad \text{and} 
    \quad |\log\beta|^{r-1}\, \gamma  \to 0 \\
    & \text{as }\; \alpha,\beta\to 0 \;\text{ so that }\; |\log\beta| \lesssim |\log\alpha|\lesssim |\log\beta|^r \; \text{ for some }\; r\geq 1,
\end{split}
\end{equation}
e.g., $    \gamma = |\log\beta|^{-r+\epsilon}\vee\alpha$  for some  $\epsilon\in (0,1)$. By the non-asymptotic upper bound in \eqref{3ST, ESS bound, null, after plugin} and the selection of the free parameters according to \eqref{free_3ST} it follows that,    for any $\alpha,\beta\in (0,1)$,
\begin{equation*}
    \begin{split}
        \Exp_0[\TST] 
        & \leq \n(\gamma,\beta/2) + \big(\n(\alpha/2,\beta/2)-\n(\gamma,\beta/2)\big)\cdot \gamma \\
        & \leq \n(\gamma,\beta/2) + \n(\alpha/2,\beta/2)\cdot \gamma.
    \end{split}
\end{equation*}
Then,   by  Corollary \ref{coro: ARE when r=0 or infty, LLR}.(i), Theorem \ref{th:asymptotic of FSS with t} and 
\eqref{main thm, condition on gamma} we conclude that
\begin{align*} 
  \Exp_0[\TST] 
&\lesssim    \frac{|\log \beta|}{\psi_1(J_0)} + \frac{|\log(\alpha \wedge \beta)|}{C}\,\gamma  \sim  \frac{|\log \beta|}{\psi_1(J_0)}   \\
  & \text{as }\; \alpha,\beta\to 0 \;\text{ so that }\; |\log\beta| \lesssim |\log\alpha|\lesssim |\log\beta|^r \; \text{ for some }\; r\geq 1.
\end{align*}
\end{proof}

\section{} \label{app: proof from Assumptions B to Assumptions A}
In this Appendix, we prove Theorem 
\ref{th:A version of G-E}, a version of the G\"artner-Ellis Theorem.  The proof is essentially the same as in \cite[Theorem 2.3.6]{Dembo_Zeitouni_LDPBook} or \cite[Theorem 3.2.1]{Bucklew_Book}, and is presented only for completeness.  Specifically,  we establish first  the  asymptotic upper bounds in (i) and (ii). Using these, we establish (iii). Finally, using (iii),  we establish the asymptotic lower bounds in (i) and (ii).

\begin{proof} [Proof of Theorem 
\ref{th:A version of G-E}]
We  establish the asymptotic upper bound only for (i), as the corresponding proof for (ii) is similar.  Thus, we assume that $\Theta^o\cap(0,\infty) \neq\emptyset$. For any $\ka_1,\ka_2\in \phi'(\Theta^o\cap(0,\infty))$  such that 
$\ka_1<\ka_2$ and  $\vartheta(\ka_1), \vartheta(\ka_2)>0$,
\begin{align*}
    \phis(\ka_1) =\vartheta(\ka_1) \ka_1-\phi(\vartheta(\ka_1)) < \vartheta(\ka_1) \ka_2 - \phi(\vartheta(\ka_1)) \leq \phis(\ka_2),
\end{align*}
which  proves that $\phis$ is strictly increasing in $\phi'(\Theta^o\cap(0,\infty))$.  From  \cite[Lemma 2.2.5]{Dembo_Zeitouni_LDPBook})
it follows that  $\phis$  is  non-negative and lower-semicontinuous,
and these properties imply that
$$ 0\leq \phis\left( \phi'(0+)\right) \leq \lim_{\theta\downarrow 0} \, \phis\left( \phi'(\theta)\right) = \lim_{\theta\downarrow 0} \, \left\{\theta \phi'(\theta)-\phi(\theta)\right\} = 0. $$

Since $ \phi'(\Theta^o\cap(0,\infty))$ is an open interval, whose right endpoint may be infinity, 
to show that \eqref{>ka} holds for every  $\ka\in
\phi'(\Theta^o\cap(0,\infty))$  it suffices to show that it  holds for every  $\ka\in \phi'((0,\thetas))$, where  $\thetas\in \Theta^o\cap(0,\infty)$.
Thus, we fix  $\thetas\in \Theta^o\cap(0,\infty)$ and denote $\phi'((0,\thetas)) \equiv (a,b)$, where $a \equiv \phi'(0+)$ and $b \equiv \phi'(\thetas)$. 

For any $\ka\in (a,b)$ and $\theta\in (0,\thetas)$, we have
\begin{equation*}
    \begin{split}
        \Pro(\Stat_n>\ka) & \leq \exp\{-n \, \theta\, \ka \} \, \Exp \left[ \exp\{n \, \theta \, \Stat_n\} \right]  =\exp\{-n (\theta \ka  - \phi_{n}(\theta))\},
    \end{split}
\end{equation*}
which, after taking logarithm, dividing by $n$ and letting $n\to \infty$, gives
$$ \overline{\lim_n} \; \frac{1}{n} \log \Pro(\Stat_n >\ka) \leq - \left(\theta \ka  -\phi(\theta)\right). $$
Optimizing the right-hand-side with respect to $\theta\in (0,\thetas)$, we obtain $-\phis(\ka)$.

Note that this asymptotic upper bound is non-trivial for every $\ka\in (a,\infty)$, since $\phis(a)=0$ and $\phis$ is strictly increasing in $(a,b)$. Therefore, it implies that $\Pro\left(T_n- \phi'(0+) > \epsilon\right)$ is an exponentially decaying sequence   for every $\epsilon>0$.  Similarly  it follows that  if  $\Theta^o\cap(-\infty,0)\neq\emptyset$, 
then $ \Pro\left(T_n-  \phi'(0-)  \leq -\epsilon\right)$  is an exponentially decaying sequence   for every $\epsilon>0$. From these observations  we conclude that if   $0\in \Theta^o$,   then $\Pro \left( |T_n-\phi'(0)|>\epsilon \right)$ is exponentially decaying for every $\epsilon>0$, and as a result  $\Pro \left( T_n \to \phi'(0) \right)=1$. Therefore,  (iii) follows using  exactly the same argument  as long as the sequence of functions
 $$ \lambda\in \bR  \to  \frac{1}{n} \, \log \Exp_{\Qro_\theta} \left[  \exp\{n\lambda T_{n}\} \right],  \quad  \,  n \in \bN $$
satisfies the  assumptions of  the theorem,  0 belongs to the interior of the effective domain of its limit, and the derivative of its limit at 0 is $\phi'(\theta)$. 
To show this, we fix $\theta\in (0,\thetas)$. Then, for any $\lambda\in \bR$,
\begin{equation*}
    \begin{split}
        \Exp_{\Qro_\theta} \left[\exp\{n\lambda T_n\}\right] & = \Exp\left[ \exp\{n((\lambda+\theta)T_n-\phi_n(\theta))\} \right] \\
        &= \exp\{ n(\phi_n(\lambda+\theta)-\phi_n(\theta)) \},
    \end{split}
\end{equation*}
and consequently
\begin{align*}
    \lim_n \, \frac{1}{n} \log \Exp_{\Qro_\theta} \left[\exp\{n\lambda T_n\}\right] =  \phi(\lambda+\theta) - \phi(\theta).
\end{align*}
The limit  is finite for $\lambda\in (-\theta,\thetas-\theta)$, which contains 0 in its interior,  inherits all the smoothness properties of  $\phi$,
and its derivative at $\lambda=0$ is  $\phi'(\theta)$.  This completes the proof of (iii).

It remains to prove the asymptotic lower bounds in (i) and (ii). Again, we only do so  for (i), as the proof for (ii) is similar.  Fix  $\ka\in (a,b)$. For any $n\in \bN$, $\theta\in (0,\thetas)$ and $\epsilon\in (0,b-\ka)$,
\begin{equation*}
    \begin{split}
        \Pro(T_n>\ka) & = \Exp_{\Qro_\theta} \left[ \exp\{-n(\theta T_n-\phi_n(\theta))\}; \, T_n>\ka \right] \\
        & \geq \Exp_{\Qro_\theta} \left[ \exp\{-n(\theta T_n-\phi_n(\theta))\};\, \ka<T_n\leq\ka+\epsilon \right] \\
        & \geq \exp\{ -n(\theta(\ka+\epsilon)-\phi_n(\theta)) \} \, \Qro_\theta(\ka<T_n\leq \ka+\epsilon).
    \end{split}
\end{equation*}
If we now set $\theta=\vartheta(\ka+\epsilon/2)$, take logarithms, divide by $n$ and let $n \to \infty$,  by  (iii) we obtain
\begin{align*}
    \underset{n}{\underline\lim} \; \frac{1}{n} \log \Pro(\Stat_n >\ka) 
    \geq - \vartheta(\ka+\epsilon/2) (\ka+\epsilon) + \phi(\vartheta(\ka+\epsilon/2)).
 \end{align*}
To   complete the proof, we let  $\epsilon\downarrow 0$ and observe that the right-hand-side converges to $-\big(\vartheta(\ka) \ka- \phi(\vartheta(\ka))\big)=-\phis(\ka)$, since $\vartheta$ and $\phi$ are both continuous in the corresponding neighborhoods.
\end{proof}


\begin{acks}[Acknowledgments]
\end{acks}

\bibliographystyle{abbrv}
\bibliography{new.bib}

\end{document}